\newtheorem{theorem}{Theorem}[section]
\newtheorem{lemma}[theorem]{Lemma}
\newtheorem{corollary}[theorem]{Corollary}
\newtheorem{example}[theorem]{Example}
\theoremstyle{definition}
\newtheorem{definition}[theorem]{Definition}
\newtheorem{definitions}[theorem]{Definitions}
\newtheorem{remark}[theorem]{Remark}
\newtheorem{remarks}[theorem]{Remarks}
\def\N{\mathbb{N}}
\def\R{\mathbb{R}}
\def\C{\mathbb{C}}
\def\Q{\mathbb{Q}}
\def\Z{\mathbb{Z}}
\def\F{\mathbb{F}}
\def\lien{\mathrel{\mkern-4mu}}
\def\too{\relbar\lien\rightarrow}
\def\tooo{\relbar\lien\relbar\lien\too}
\let\ds=\displaystyle
\let\wt=\widetilde
\def\Cl{{\mathcal C}\hskip-2pt{\ell}}
\def\cl{c\hskip-1pt{\ell}}
\def\Pl{P\hskip-1pt{l}}
\def\plus{\ds\mathop{\raise 2.0pt \hbox{$\bigoplus$}}\limits}
\def\prd{ \ds\mathop{\raise 2.0pt \hbox{$\prod$}}\limits}
\def\sm{  \ds\mathop{\raise 2.0pt \hbox{$\sum$}}\limits}
\author[Georges  Gras]{Georges  Gras}
\address{Villa la Gardette \\ chemin Ch\^ateau Gagni\`ere \\  F--38520 Le Bourg d'Oisans.}
\email{g.mn.gras@wanadoo.fr  {\it url\,:\,}\url{http://www.researchgate.net/profile/Georges_Gras} }
\keywords{number fields; class field theory; $p$-class groups; $p$-extensions; generalized classes; 
ambiguous classes; Chevalley's formula}
\subjclass{Primary 11R29; 11R37}
\begin{document}
 
\title[Invariant generalized ideal classes]{Invariant generalized ideal classes \\ Structure theorems for
$p$-class groups \\ in $p$-extensions \\ \vspace{0.2cm} {\footnotesize \it A Survey}}

\date{October 14, 2016}

\begin{abstract}
We give, in Sections 2 and 3, an english translation of: {\it Classes g\'en\'e\-ralis\'ees 
invariantes}, J. Math. Soc. Japan, 46,  3 (1994), with some improvements and with notations
and definitions in accordance with our book:  {\it Class Field Theory: from theory to practice},
SMM, Springer-Verlag, $2^{\rm nd}$ corrected printing 2005. 
We recall, in Section 4, some  structure theorems for finite $\Z_p[G]$-modules ($G \simeq \Z/p\,\Z$) obtained in: 
 {\it Sur les $\ell$-classes d'id\'eaux dans les extensions \!cycliques relatives de degr\'e premier \!$\ell$},
Annales de l'Institut Fourier,  23,  3 (1973). Then we recall the algorithm of local normic computations which allows 
to obtain the order and (potentially) the structure of a $p$-class group in a cyclic extension of degree $p$.

\noindent
In Section 5, we apply this to the study of the structure of relative $p$-class groups of Abelian extensions 
of prime to $p$ degree, using the Thaine--Ribet--Mazur--Wiles--Kolyvagin ``principal theorem'', and the 
notion of ``admissible sets of prime numbers'' in a cyclic extension of degree $p$, from:
{\it Sur la structure des groupes de classes relatives}, Annales de l'Institut Fourier, 43, 1 (1993).

\noindent
In conclusion, we suggest the study, in the same spirit, of some deep invariants attached to the 
$p$-ramification theory (as dual form of non-ramification theory) and which have become standard 
 in a $p$-adic framework.

\noindent
Since some of these techniques have often been rediscovered, we give a substantial (but certainly incomplete) 
bibliography which may be used to have a broad view on the subject.
\end{abstract}

\maketitle

\section{Introduction -- Generalities} \label{sect1}

Let $K/k$ be a cyclic extension of algebraic number fields, with Galois group $G$, and let $L$ be a
finite Abelian extension of $K$; we suppose that $L/k$ is Galois, so that $G$ operates by conjugation
on ${\rm Gal}(L/K)$. 

\smallskip
We shall see the field $L$ given, via Class Field Theory, by some Artin group of $K$ 
(e.g., the Hilbert class field $H_K^{+}$ of $K$ associated with 
the group of principal ideals, in the narrow sense,
any ray class field $H_{K, {\mathfrak m}}^{+}$ associated with a ray group 
modulo a modulus ${\mathfrak m}$ of $k$, in the narrow sense, or more generally any 
subfield $L$ of these canonical fields, defining ${\rm Gal}(H_{K, {\mathfrak m}}^{+}/L)$ by means of a
sub-$G$-module ${\mathcal H}$ of the generalized class group 
$\Cl_{K, {\mathfrak m}}^{+} \simeq {\rm Gal}(H_{K, {\mathfrak m}}^{+}/K)$). 

\smallskip
We intend to give, from the arithmetic of $k$ and elementary local normic 
computations in $K/k$, an explicit formula for
$$\#  {\rm Gal}(L/K)^G = \#  (\Cl_{K, {\mathfrak m}}^{+}/ {\mathcal H} )^G. $$

This order is the degree, over $K$, of the maximal subfield of $L$ (denoted $L^{\rm ab}$) which is 
Abelian over $k$.

\smallskip
Indeed, since $G$ is cyclic, it is not difficult to see that the com\-mutator subgroup
$[\Gamma, \Gamma]$ of $\Gamma :=  {\rm Gal}(L/k)$ is equal to ${\rm Gal}(L/K)^{1-\sigma} \simeq
(\Cl_{K, {\mathfrak m}}^{+}/ {\mathcal H} )^{1-\sigma}$,
where $\sigma$ is a generator of $G$ (or an extension in $\Gamma$).
So we have the exact sequences
\begin{equation}\label{eq1}
\begin{aligned}
&1 \too {\rm Gal}(L/K)^{1-\sigma} \tooo \Gamma \tooo \Gamma^{\rm ab} = 
\Gamma/[\Gamma, \Gamma] = {\rm Gal}(L^{\rm ab}/k) \too 1,   \\
&1 \too {\rm Gal}(L/K)^G \tooo {\rm Gal}(L/K) \mathop{\tooo}^{1-\sigma} {\rm Gal}(L/K)^{1-\sigma} \too 1.
\end{aligned}
\end{equation}

 Hence $\ds \#  {\rm Gal}(L/K)^G = [L : K] \cdot \frac{\# \Gamma^{\rm ab}}{\# \Gamma}
= [L : K] \cdot \frac{ [L^{\rm ab} : k]}{[L : K] [K : k]} = [L^{\rm ab} : K]$.
The study of the structure of ${\rm Gal}(L/K)$ as $G$-module
(or at least the computation of its order) is based under the study of the following filtration:

\begin{definition} \label{def1}
Let $M := {\rm Gal}(L/K)$ and let $(M_i)_{i \geq 0}$ be the 
increasing sequence of sub-$G$-modules defined (with $M_0:= 1$) by
$$M_{i+1}/M_i := (M/M_i)^G,\ \, \hbox{for $0\leq i \leq n$}, $$
where $n$ is the least integer $i$ such that $M_i = M$. 
\end{definition}

For $i=0$, we get $M_1 = M^G$. 
We have equivalently $M_{i+1}= \{h \in M, \ h^{1-\sigma} \in M_i\}$.
Thus $M_i= \{h \in M, \ h^{(1-\sigma)^i} =1 \}$ and $(1-\sigma)^n$ is the anihilator of $M$.

\smallskip
If $L_i$ is the subfield of $L$ fixed by $M_i$, this yields the following tower of fields, Galois over $k$, 
from the exact sequences $1 \to M_i \tooo M\ds \mathop{\tooo}^{(1-\sigma)^i} M^{(1-\sigma)^i} \to 1$
such that $[L_i : L_{i+1}] = (M_{i+1} : M_i)$ which can be computed from local arithmetical tools in $K/k$
as described in the Sections \ref{sect3} and \ref{sect4}:

\unitlength=0.8cm
$$\vbox{\hbox{\hspace{-0.05cm} \vspace{-0.7cm} 
 \begin{picture}(9.5,5.1)
\put(7.3,3.50){\line(1,0){1.7}}
\put(9.6,3.50){\line(1,0){1.8}}
\put(9.05,3.4){$L_1$}
\put(10.1,3.65){$M_1$}
\put(-1.6,3.50){\line(1,0){3.6}}
\put(2.9,3.50){\line(1,0){3.8}}
\put(-2.3,1.9){\line(0,1){1.20}}
\bezier{700}(-2.0,3.9)(5.0,5.1)(11.7,3.9)
\put(4.1,4.7){$M \simeq  \Cl_{K, {\mathfrak m}}^{+}/ {\mathcal H}$}
\bezier{700}(-2.0,1.4)(6.8,0.1)(12.0,3.2)
\put(4.8,0.7){$\Gamma$}
\bezier{400}(7.1,3.2)(9.3,2.5)(11.6,3.3)
\bezier{550}(2.6,3.2)(7.0,1.4)(11.7,3.2)
\put(7.65,2.6){$M_i$}
\bezier{400}(-2.0,3.2)(2.35,1.2)(6.7,3.2)
\put(1.7,1.8){$\simeq M^{(1-\sigma)^i}$}
\bezier{400}(-2.0,3.3)(0.1,2.6)(2.2,3.2)
\put(0.7,2.6){$\simeq M^{(1-\sigma)^{i+1}}$}
\put(11.45,3.4){$L\!=\!L_0$}
\put(2.0,3.4){$L_{i+1}$}
\put(6.8,3.4){$L_i$}
\put(-3.1,3.4){$K\!=\!L_n$}
\put(5.5,2.0){$M_{i+1}$}
\put(-2.5,1.4){$k$}
\put(-2.2,2.3){$G\!=\! \langle \sigma \rangle$}
\end{picture}   }} $$
\unitlength=1.0cm

\bigskip
In a dual manner, we have the following tower of fields where
$L'_i$ is the subfield of $L$ fixed by $M^{(1-\sigma)^i}$, whence
$[L'_i : K] = \#M_i$:

 \unitlength=0.8cm
$$\vbox{\hbox{\hspace{-0.05cm} \vspace{-0.7cm} 
 \begin{picture}(9.5,5.1)
\put(7.6,3.50){\line(1,0){3.7}}
\put(-1.8,3.50){\line(1,0){1.6}}
\put(1.1,3.50){\line(1,0){1.6}}
\put(3.4,3.50){\line(1,0){3.3}}
\put(-1.45,3.65){$\#M_1$}
\put(-2.3,1.9){\line(0,1){1.20}}
\bezier{700}(-2.0,3.9)(5.0,5.1)(11.7,3.9)
\put(4.1,4.7){$M \simeq \Cl_{K, {\mathfrak m}}^{+}/ {\mathcal H}$}
\bezier{700}(-2.0,1.4)(6.8,0.1)(12.0,3.2)
\put(4.8,0.7){$\Gamma$}
\bezier{400}(7.1,3.2)(9.3,2.7)(11.6,3.3)
\bezier{550}(3.0,3.2)(7.2,1.2)(11.7,3.2)
\put(7.7,2.5){$M^{(1-\sigma)^{i+1}}$}
\bezier{400}(-2.0,3.2)(2.35,1.2)(6.7,3.2)
\bezier{400}(-2.0,3.3)(0.35,2.6)(2.8,3.2)
\put(1.1,2.6){$\#M_i$}
\put(2.0,1.85){$\#M_{i+1}$}
\put(11.4,3.4){$L\!=\!L'_n$}
\put(2.8,3.4){$L'_i$}
\put(-0.2,3.4){$L'_{1}\!=\! L^{\rm ab}$}
\put(6.8,3.4){$L'_{i+1}$}
\put(-3.1,3.4){$K\!=\!L'_0$}
\put(5.5,1.8){$M^{(1-\sigma)^i}$}
\put(-2.5,1.4){$k$}
\put(-2.2,2.3){$G\!=\! \langle \sigma \rangle$}
\end{picture}   }} $$
\unitlength=1.0cm

\medskip
Our method to compute $\#(M_{i+1}/M_{i})$ differs from classical ones by ``translating'' the 
well-known Chevalley's formula giving the number of ambiguous classes, see \eqref{eq28}, 
Remark \ref{rema3}), by means of the exact sequence of Theorem \ref{prop1} applied to a 
suitable ${\mathcal H} = {\mathcal H}_0$.

\smallskip
The main application is the case where $G$ is cyclic of order a prime $p$ and when $L/K$
is an Abelian finite $p$-extension defined via class field theory (e.g., various $p$-Hilbert class fields
in most classical practices). So, when the $M_i$ are computed, it is possible to give, 
under some assumptions (like $M^{1+\sigma + \cdots +\sigma^{p-1}} = 1$ and/or $\# M^G = p$), the 
structure of ${\rm Gal}(L/K)$ as $\Z_p[G]$-module or at least as Abelian $p$-group.

\smallskip
In the above example, this will give for instance the structure of the $p$-class group 
in the restricted sense from the knowledge of the $p$-class group of $k$ 
and some local normic computations in $K/k$.

\begin{remarks}
(i) In some french papers, we find the terminology {\it sens restreint {\rm vs} sens ordinaire}
which was introduced by J. Herbrand in \cite[VII, \S4]{H}, and we have used in \cite{Gr1} the
upperscripts ${}^{\rm res}$ and ${}^{\rm ord}$ to specify the sense; to be consistent with 
many of today's publications, we shall use here the words {\it narrow sense} instead 
of {\it restricted sense} and use the upperscript ${}^{+}$. However, we utilize 
$S$-objects, where $S$ is a suitable set of places ($S$-units, $S$-class groups,
$S$-class fields, etc.), so that $S=\emptyset$ corresponds to the restricted sense 
and totally positive elements~; the ordinary (or wide) sense corresponds to the choice 
of the set $S$ of real infinite places of the field, thus, for the ordinary sense, we must keep 
the upperscript ${}^{\rm ord}$ (see \S\S\ref{maindef}, \ref{clg}).

\smallskip
We shall consider generalized $S$-class groups modulo ${\mathfrak m}$ since any 
situation is available by choosing suitable ${\mathfrak m}$ and $S$
(including the case $p=2$ with ordinary and narrow senses).

\smallskip
(ii) It is clear that the study of $p$-class groups in $p$-extensions $K/k$ is rather easy compared to 
the ``semi-simple'' case (i.e., when $p \nmid {\rm Gal}(K/k)$); see, e.g., an overview in 
\cite{St1}, and an extensive algebraic study in \cite{L1} via representation theory, then in \cite{Ku}, 
\cite{Sch1}, \cite{Sch2}, \cite{Sch3}, \cite{SW}, and in \cite{Wa} for cyclotomic fields.

\smallskip
Indeed, the semi-simple case is of a more Diophantine framework and is part of an analytic setting 
leading to difficult well-known questions in Iwasawa theory \cite{Iw}, then in $p$-adic L-functions that we had 
conjectured in \cite[(1977)]{Gr14}, and which were initiated with the Thaine--Ribet--Mazur--Wiles--Kolyvagin 
``principal theorem'' \cite[(1984)]{MW} with significant developments by C. Greither and R. Ku\v cera
(e.g., \cite{GK1}, \cite{GK2}, \cite{GK3}, \cite{GK4}), which have in general no connection with the present text, 
part of the so called ``genera theory'' (except for the method of Section \ref{sect5} in which we 
obtain informations on the semi-simple case).
\end{remarks}

\section{Class field theory -- Generalized ideal class groups}\label{sect2}
We use, for some technical aspects, the principles defined in \cite{Gr2}; one can also use 
the works of Jaulent as \cite{Ja1}, \cite{Ja2}, of the same kind. For instance, for a real infinite place 
which becomes complex in an extension, we speak of {\it complexification} instead of {\it ramification}, 
and the corresponding {\it inertia} subgroup of order $2$ is called the {\it decomposition group} 
of the place; in other words this place has a {\it residue degree~$2$} instead of a {\it ramification index $2$}.
If the real place remains real by extension, we say as usual that this place splits 
(of course into two real places above) and that its residue degree is $1$. The great advantage is that
the moduli ${\mathfrak m}$ of class field theory are ordinary integer ideals, any situation being 
obtained from the choice of $S$. 

\smallskip
A consequence of this viewpoint is that the pivotal notion is the narrow sense.

\subsection{Numbers -- Ideals -- Ideal classes}\label{maindef}
Let $F$ be any number field (this will apply to $K$ and $k$). We denote by:

\medskip
(i)  $\Pl_F= \Pl_{F,0} \cup \Pl_{F,\infty}$, the set of finite and infinite places of $F$. The places
 (finite or infinite) are given as symbols ${\mathfrak p}$; the finite places are the prime ideals; 
the infinite places may be real or complex and are associated with the $r_1+r_2$ embeddings of $F$
 into $\R$ and $\C$ as usual (with $r_1+2\,r_2= [F : \Q]$);

\medskip
(ii)  $T \ \ \& \ \ S$, two disjoint sets of places of  $F$. We suppose that $T$ has only finite places 
and that $S=: S_0 \cup S_\infty$, $S_0 \subset \Pl_{F,0}$,  $S_ \infty \subset \Pl_{F, \infty}$,
where $S_ \infty$ does not contain any complex place;

\medskip
(iii) ${\mathfrak m}$, a modulus of $F$ with support $T$ (i.e., a nonzero integral ideal of $F$ divisible 
by each of the prime ideals ${\mathfrak p} \in T$ and not by any ${\mathfrak p} \notin T$);

\medskip
(iv) $v_{\mathfrak p}  : F^\times \to \Z$ is the normalized ${\mathfrak p}$-adic valuation 
when ${\mathfrak p}$ is a prime ideal; if ${\mathfrak p}$ is a real infinite place, then
$v_{\mathfrak p}  : F^\times \to \Z/2\,\Z$ is defined by $v_{\mathfrak p}(x)=0$
(resp. $v_{\mathfrak p}(x)=1$) if $\sigma_{\mathfrak p}(x)>0$ (resp. $\sigma_{\mathfrak p}(x)<0$)
where $\sigma_{\mathfrak p}$ is the corresponding embedding $F \to \R$ associated with 
${\mathfrak p}$; if ${\mathfrak p}$ is complex (thus corresponding to a pair of conjugated embeddings
$F \to \C$), then $v_{\mathfrak p}=0$.

\medskip
(v)  $\ \ F^{\times +} = \{x \in F^\times, \ \   v_{\mathfrak p} (x)=0, \ \, 
\forall {\mathfrak p} \in \Pl_{F, \infty} \}$, group of totally positive elements;

\smallskip
\hspace{0.75cm}
$U_{F, T} = \{x \in F^\times , \   v_{\mathfrak p}(x)=0, \  \forall  {\mathfrak p} \in T \}$;
$U_{F, T}^+ = U_{F, T} \cap F^{\times +}$;
 
\smallskip
\hspace{0.8cm}$U_{F, {\mathfrak m}} = \{x \in U_{F, T} , \ \, x \equiv 1 \pmod {\mathfrak m}\}$;
$\ U_{F, {\mathfrak m}}^+ = U_{F, {\mathfrak m}} \cap F^{\times +}$;

\medskip
(vi) $\ E_F^S = \{x \in F^\times , \ \,  v_{\mathfrak p}(x)=0, \ \forall  {\mathfrak p} \notin S \}$,
 group of $S$-units of $F$;

\smallskip
\hspace{0.7cm}$E_{F,{\mathfrak m}}^S = \{x \in E_F^S, \,\  x \equiv 1 \pmod {\mathfrak m}\}$;

\smallskip
\hspace{0.7cm}$E_{F,{\mathfrak m}}^{\Pl_\infty} =: E_{F,{\mathfrak m}}^{\rm ord}$, 
group of units (in the ordinary sense) $\varepsilon \equiv 1 \pmod {\mathfrak m}$;

\smallskip
\hspace{0.7cm}$E_{F,{\mathfrak m}}^\emptyset =: E_{F,{\mathfrak m}}^+$, group of totally positive units 
$\varepsilon \equiv 1 \pmod {\mathfrak m}$;

\medskip
(vii) $\ I_F$, group of fractional ideals of $F$;

\smallskip
\hspace{0.74cm}$P_F$, group of principal ideals $(x)$, $x \in F^\times$ (ordinary sense);

\smallskip
\hspace{0.74cm}$P_F^+$, group of principal ideals $(x)$, $x \in F^{\times +}$ (narrow sense);

\smallskip
\hspace{0.74cm}$I_{F, T}= \{ {\mathfrak a} \in I_F, \ \, v_{\mathfrak p}({\mathfrak a}) = 0, \ \forall {\mathfrak p}\in T \}$;
$P_{F, T}= P_F \cap I_{F, T}$; $P_{F, T}^+= P_F ^+\cap I_{F, T}$;

\smallskip
\hspace{0.72cm}$P_{F,{\mathfrak m}}= \{ (x), \ \,  x \in U_{F, {\mathfrak m}} \}$,
ray group modulo ${\mathfrak m}$ in the ordinary sense; 

\smallskip
\hspace{0.72cm}$P_{F,{\mathfrak m}}^+= \{ (x), \ \,  x \in U_{F, {\mathfrak m}}^+ \}$,
ray group modulo ${\mathfrak m}$ in the narrow sense; 

\medskip
(viii) $\, \Cl_{F,{\mathfrak m}}^{\rm ord} = I_{F, T} / P_{F,{\mathfrak m}}$, generalized ray class group modulo 
${\mathfrak m}$ (ordinary sense);

\smallskip
\hspace{0.8cm}$\Cl_{F,{\mathfrak m}}^{+} = I_{F, T} / P_{F,{\mathfrak m}}^+$, 
generalized ray class group modulo ${\mathfrak m}$ (narrow sense);

\smallskip
\hspace{0.8cm}$\Cl_{F,{\mathfrak m}}^S := \Cl_{F,{\mathfrak m}}^{+}/ \langle \cl (S) \rangle_\Z$,
$S$-class group modulo ${\mathfrak m}$ 
where $\langle \cl (S) \rangle_\Z$ is the subgroup of $\Cl_{F,{\mathfrak m}}^{+}$ generated
by the classes of ${\mathfrak p} \in S_0$ and, for real ${\mathfrak p}\in S_\infty$, by the classes
of the principal ideals $(x_{\mathfrak p}^{\mathfrak m})$ where the $x_{\mathfrak p}^{\mathfrak m} \in F^\times$
satisfy to the following congruences and signatures:

\smallskip
\centerline{$x_{\mathfrak p}^{\mathfrak m} \equiv 1 \pmod {\mathfrak m}, \ \ 
\sigma_{\mathfrak p}(x_{\mathfrak p}^{\mathfrak m})<0 \ \  \& \ \ 
\sigma_{\mathfrak q}(x_{\mathfrak p}^{\mathfrak m})>0 \ \forall {\mathfrak q} \in 
\Pl_{F, \infty}\setminus \{{\mathfrak p}\}$;}

\medskip
we have $P_F = \langle\, (x_{\mathfrak p} )\,\rangle^{}_{{\mathfrak p} \in \Pl_{F,\infty}} \cdot P_F^+\ \ \ \& \ \ \  
P_{F,{\mathfrak m}} = \langle\, (x_{\mathfrak p}^{\mathfrak m})\, 
\rangle^{}_{{\mathfrak p} \in \Pl_{F,\infty}} \cdot P_{F,{\mathfrak m}}^+$.

\medskip
Taking $S= \emptyset $, then $S=\Pl_{F, \infty}$, we find again 

\medskip
\centerline{$\Cl_{F,{\mathfrak m}}^\emptyset = \Cl_{F,{\mathfrak m}}^{+}$, then
$\Cl_{F,{\mathfrak m}}^{\Pl_{F, \infty}} = \Cl_{F,{\mathfrak m}}^{+} /
\cl(\langle\, (x_{\mathfrak p}^{\mathfrak m})\, \rangle^{}_{{\mathfrak p}\in\Pl_{F,\infty}} ) 
 = \Cl_{F,{\mathfrak m}}^{\rm ord}$.}
 
\medskip
(ix) $\cl_F : I_{F, T} \too \Cl_{F,{\mathfrak m}}^S$, canonical map which must be read as 
$\cl_{F,{\mathfrak m}}^S$ for suitable ${\mathfrak m}$ and $S$, according to the case of 
class group considered, when there is no ambiguity.

\subsection{Class fields and corresponding class groups} \label{clg}
We define the generalized Hilbert class fields as follows:

\medskip
(i) $H_F^{+}$ is the Hilbert class field in the narrow sense (maximal Abelian extension of $F$
unramified for prime ideals and possibly complexified at $\infty$, which
means that the field $H_F^{+}$ may be non-real even if $F$ is totally real); we have 

\medskip
\centerline{${\rm Gal}(H_F^{+}/F) \simeq \Cl_F^{+} = I_F / P_F^+$; }

\medskip
(ii) $\ H_F^{\Pl_\infty} = H_F^{\rm ord} \subseteq H_F^{+}$ is the Hilbert class field in the ordinary 
sense (maximal Abelian extension of $F$, unramified for prime ideals, and splitted at $\infty$); we have 

\medskip
\centerline{${\rm Gal}(H_F^{\rm ord}/F) \simeq \Cl_F^{\rm ord} = I_F / P_F$;}

\medskip
(iii) $\ H_F^S \subseteq H_F^{+}$ is the $S$-split Hilbert class field (maximal Abelian extension of $F$ 
unramified for prime ideals and splitted at $S$); we have 

\medskip
\centerline{${\rm Gal}(H_F^S/F) \simeq \Cl_F^S = \Cl_F^{+}/ \langle \cl_F (S) \rangle_\Z$; }

\medskip
recall that the decomposition group of ${\mathfrak p}\in S_0$ (resp. $S_\infty$) is given, in 
$\Cl_{F}^{+}$, by the cyclic group generated by the class of ${\mathfrak p}$ (resp. $(x_{\mathfrak p})$);
hence ${\rm Gal}(H_{F}^{+}/H_F^S)$, generated by these decomposition groups, is isomorphic to 
$ \langle \cl_F (S) \rangle_\Z$.

\medskip
(iv) $\ H_{F, {\mathfrak m}}^{+}$ is the ${\mathfrak m}$-ray class field in the narrow sense, 

\medskip
\hspace{0.75cm}$H_{F, {\mathfrak m}}^{\rm ord}$ is the ${\mathfrak m}$-ray class field in the ordinary sense,

\medskip
\hspace{0.75cm}$H_{F, {\mathfrak m}}^S$ is the $S$-split ${\mathfrak m}$-ray class field of $F$
(denoted $F{\scriptstyle (\mathfrak m)}^S$ in \cite{Gr2}); we have

\medskip
\centerline{${\rm Gal}(H_{F, {\mathfrak m}}^S/F) \simeq \Cl_{F, {\mathfrak m}}^S
= \Cl_{F, {\mathfrak m}}^{+} / \langle \cl_F (S) \rangle_\Z$}

\medskip
(see (viii) and (ix) for the suitable definitions of $\cl_F$ depending on the class group considered).
In other words, $H_{F, {\mathfrak m}}^S$
is the maximal subextension of $H_{F, {\mathfrak m}}^{+}$ in which the (finite and infinite) places of
$S$ are totally split. 

\smallskip
For instance, for a prime $p$, the $p$-Sylow subgroups of $\Cl_F^{\rm ord}$ and
$\Cl_F^{\Pl_p}$, for the set $S = \Pl_p := \{{\mathfrak p}, \ \  {\mathfrak p} \mid p\}$, 
have a significant meaning in some duality theorems.

\section{Computation for the order of $\big(\Cl_{K,{\mathfrak m}}^{+} / {\mathcal H} \big)^G$}\label{sect3}
Let $K/k$ be any cyclic extension of number fields, of degree $d$, of Galois group $G$, and let
$\sigma$ be a fixed generator of $G$. We fix a modulus ${\mathfrak m}$ of $k$ with support $T$ 
which implies that $H_{K,{\mathfrak m}}^{+}/k$ is Galois (by
abuse we keep the same notation for the extensions of ${\mathfrak m}$ and $T$ in $K$). Then let 
$${\mathcal H} \subseteq \Cl_{K,{\mathfrak m}}^{+}$$ 

be an arbitrary sub-$G$-module of $\Cl_{K,{\mathfrak m}}^{+}$.

\begin{remarks} \label{rem1}{\rm 
(i) The group $G$ acts on $\Cl_{K,{\mathfrak m}}^{+}$, hence on 
${\rm Gal}(H_{K,{\mathfrak m}}^{+}/K)$ by conjugation via the Artin isomorphism
${\mathfrak A} \mapsto \big( \frac{H_{K,{\mathfrak m}}^{+}/K}{{\mathfrak A}} \big)
\in {\rm Gal}(H_{K,{\mathfrak m}}^{+}/K)$,
for all ${\mathfrak A} \in I_{K,T}$ (modulo $P_{K,{\mathfrak m}}^+$), for which

\smallskip
\centerline{$ \Big( \frac{H_{K,{\mathfrak m}}^{+}/K}{{\mathfrak A^\tau}} \Big) = 
\tau \cdot \Big( \frac{H_{K,{\mathfrak m}}^{+}/K}{{\mathfrak A}} \Big)\cdot \tau^{-1}$,
for all $\tau \in G$.}

\medskip
(ii) The sub-$G$-module ${\mathcal H}$ fixes a field $L \subseteq H_{K, {\mathfrak m}}^{+}$
which is Galois over $k$ and in the same way, ${\rm Gal}(L/K) 
\simeq \Cl_{K, {\mathfrak m}}^{+} / {\mathcal H}$ is a $G$-module.

\smallskip
(iii) Taking ${\mathcal H} =  \langle \cl_K (S) \rangle_\Z$, $S \subset \Pl_K$ (see (viii))
leads to $\Cl_{K,{\mathfrak m}}^{+} / {\mathcal H} = \Cl_{K,{\mathfrak m}}^S
\ \  \& \ \  L=H_{K, {\mathfrak m}}^S$ (assuming that $\cl_K(S)$ is a sub-$G$-module).

\medskip
(iv)  If we take, more generally, a modulus ${\mathfrak M}$ of $K$ ``above ${\mathfrak m}$'', 
it must be invariant by $G$; so necessarily, ${\mathfrak M}=({\mathfrak m})$ extended to $K$, 
except if some ${\mathfrak P} \mid {\mathfrak M}$ is ramified since $({\mathfrak p}) = 
\prod_{{\mathfrak P} \mid {\mathfrak p}} {\mathfrak P}^{e_{\mathfrak p}}$.
But in class field theory, it is always possible to work with a multiple ${\mathfrak M}'$ of 
${\mathfrak M}$ (because $H_{K, {\mathfrak M}}^{+} \subseteq H_{K, {\mathfrak M}'}^{+}$), 
so that the case ${\mathfrak M}=({\mathfrak m})$ is universal for our purpose and is, in practice,
any multiple of the conductor ${\mathfrak f}_{L/K}$ of $L/K$. }
\end{remarks}

We intend to compute $\# (\Cl_{K,{\mathfrak m}}^{+} / {\mathcal H})^G = \# {\rm Gal}(L/K)^G$,
which is equivalent, from exact sequences \eqref{eq1}, to obtain the degree $[L^{\rm ab} : K]$, 
where $L^{\rm ab}$ is the maximal subextension of $L$, Abelian over $k$. 

\smallskip
Our method is straightforward and is based on the well-known ``ambiguous class number formula''
given by Chevalley \cite[(1933)]{Ch1}, and used in any work on class field theory
(e.g.,   \cite{Ch2}, \cite{AT}, \cite{L}, \cite[Chap. 3]{Ja1}, \cite{L3}), often in a hidden manner,
since it is absolutely necessary for the interpretation, in the cyclic case, of the famous idelic index
$(J_k : k^\times {\rm N}_{K/k} (J_K)) = [K^{\rm ab} : k]$, valid for any finite extension $K/k$ 
and which gives the {\it product formula} between normic symbols
in view of the Hasse norm theorem (in the cyclic case).

\smallskip
This formula has also some importance for Greenberg's conjectures \cite{Gre2} on Iwasawa's 
$\lambda, \mu$  invariants for the $\Z_p$-extensions of a totally real number field
\cite{Gr15}.

\smallskip
Chevalley's formula in the cyclic case is based on (and roughly speaking equivalent to) the nontrivial 
computation of the Herbrand quotient 
$\frac{(E_k : {\rm N}_{K/k}(E_K))} {({}_{\rm N}E_K : E_K^{1-\sigma})} = \frac{2^{\rm rc}}{[K:k]}$
of the group of units $E_K$, where ${}_{\rm N} E_K$ is the subgroup of units of norm $1$
in $K/k$ and where ${\rm rc}$ is the number of real places of $k$, complexified in $K$.
Chevalley's formula was established first by Takagi for cyclic extensions of prime degree $p$; 
the generalization to arbitrary cyclic case by Chevalley was possible due to the so called 
``Herbrand theorem on units'' \cite{H}.

\smallskip
Many fixed point formulas where given in the same framwork for other notions of classes 
(e.g., logarithmic class groups, \cite{Ja3}, \cite{So}, $p$-ramification torsion groups, 
\cite[Theorem IV.3.3]{Gr2}, \cite{MoNg}).

\subsection{The main exact sequence and the computation of 
$\#(\Cl_{K,{\mathfrak m}}^{+}/{\mathcal H})^G$}\label{ssect}
\subsubsection{Global computations}
Recall that ${\mathcal H}$ is a sub-$G$-module of 
$\Cl_{K,{\mathfrak m}}^{+}\! =\! I_{K,T}/ P_{K,{\mathfrak m}}^+$. Put 
\begin{equation}\label{eq2}
\wt {\mathcal H} = \{h \in \Cl_{K,{\mathfrak m}}^{+}, \ \, h^{1-\sigma} \in {\mathcal H}\} ; 
\end{equation}

it is obvious that
\begin{equation}\label{eq3}
\big (\Cl_{K,{\mathfrak m}}^{+} / {\mathcal H} \big)^G = \wt {\mathcal H} / {\mathcal H}. 
\end{equation}

We have the exact sequences
\begin{equation}\label{eq4}
\begin{aligned}
1\too \Cl_{K,{\mathfrak m}}^{{+}\,G} \tooo \wt {\mathcal H} 
\mathop{\tooo}^{1-\sigma} (\wt {\mathcal H})^{1-\sigma} \too 1 \\ 
1\too {}_{{\rm N}}{\mathcal H} \tooo  {\mathcal H} \mathop{\tooo}^{{\rm N}_{K/k}} {\rm N}_{K/k} ({\mathcal H}) \too 1, 
\end{aligned}
\end{equation}

with ${}_{{\rm N}}{\mathcal H} = {\rm Ker}( {\rm N}_{K/k})$, where ${\rm N}_{K/k}$ denotes the 
{\it arithmetical norm}\,\footnote{\,For $K/k$ Galois,
the arithmetical norm ${\rm N}_{K/k}$ is defined
multiplicatively on the group of ideals of $K$ by ${\rm N}_{K/k} ({\mathfrak P}) =
{\mathfrak p}^{f_{\mathfrak p}}$ for prime ideals ${\mathfrak P}$ of $K$, where
${\mathfrak p}$ is the prime ideal of $k$ under ${\mathfrak P}$ and
$f_{\mathfrak p}$ its residue degree in $K/k$. 
If ${\mathfrak A}=(\alpha)$ is principal in $K$, then 
${\rm N}_{K/k} ({\mathfrak A}) = ({\rm N}_{K/k}(\alpha))$ in $k$.}
as opposed to the {\it algebraic norm} defined in $\Z[G]$
by $\nu_{K/k} = 1+ \sigma + \cdots + \sigma^{d-1}$, and for which we have the relation
$\nu_{K/k} = j_{K/k} \circ {\rm N}_{K/k}$, where $j_{K/k}$ is the map of extension of ideals from $k$ to $K$
(it corresponds, via the Artin map, to the transfer map for Galois groups);
for a prime ideal ${\mathfrak P}$ of $K$, $j_{K/k} \circ {\rm N}_{K/k}({\mathfrak P}) = 
j_{K/k}({\mathfrak p}^{f_{\mathfrak p}})
= (\prod_{{\mathfrak P'} \mid {\mathfrak p}} {\mathfrak P'}^{e_{\mathfrak p}})^{f_{\mathfrak p}}$
(where $e_{\mathfrak p}$ is the ramification index),
which is indeed ${\mathfrak P}^{\nu_{K/k}}$ since $G$ operates transitively on the
${\mathfrak P'}\! \mid {\mathfrak p}$ with a decomposition group of order 
$\frac{[K : k]}{e_{\mathfrak p}f_{\mathfrak p}}$.

\smallskip
By definition, for an ideal ${\mathfrak A}$ of $K$, we have ${\rm N}_{K/k}(\cl_K ({\mathfrak A}) )
= \cl_k ({\rm N}_{K/k}({\mathfrak A}))$, and for any ideal ${\mathfrak a}$ of $k$, we have
$j_{K/k}(\cl_k ({\mathfrak a}) ) = \cl_K  (j_{K/k}({\mathfrak a}))$,
which makes sense since ${\rm N}_{K/k}(P_{K ,{\mathfrak m}}^+) \subseteq 
P_{k ,{\mathfrak m}}^+$ and $j_{K/k}(P_{k ,{\mathfrak m}}^+) \subseteq 
P_{K ,{\mathfrak m}}^+$, seeing the modulus ${\mathfrak m}$ of $k$ extended in 
$K$ in some writings. 

\smallskip
To simplify the formulas, we write ${\rm N}$ for ${\rm N}_{K/k}$.

\medskip
Recall that for $\ell$ prime, such that $\ell \nmid d=[K : k]$, the $\ell$-Sylow subgroup
$\Cl_{k,{\mathfrak m}}^{+} \otimes \Z_\ell$ is isomorphic to
$(\Cl_{K,{\mathfrak m}}^{+} \otimes \Z_\ell)^G$ since the map
$j_{K/k} : \Cl_{k,{\mathfrak m}}^{+} \otimes \Z_\ell \too 
\Cl_{K,{\mathfrak m}}^{+} \otimes \Z_\ell$ is injective,
and the map ${\rm N}_{K/k} : \Cl_{K,{\mathfrak m}}^{+} \otimes \Z_\ell \too 
\Cl_{k,{\mathfrak m}}^{+} \otimes \Z_\ell$ is surjective.

\medskip
Let ${\mathcal I}$ be any {\it subgroup} of $I_{K, T}$ such that $\cl_K ({\mathcal I}) = {\mathcal H}$, i.e.,
\begin{equation}\label{eq5}
{\mathcal I}\cdot P_{K ,{\mathfrak m}}^+\, \big /P_{K ,{\mathfrak m}}^+ = {\mathcal H} ;
\end{equation}

the group ${\mathcal I}\cdot P_{K ,{\mathfrak m}}^+$ is unique and we then have 
\begin{equation}\label{eq6}
{\rm N}({\mathcal H}) =  {\rm N}({\mathcal I}) \cdot P_{k ,{\mathfrak m}}^+\, \big /
P_{k ,{\mathfrak m}}^+ \simeq {\rm N}({\mathcal I}) \big /
 {\rm N}({\mathcal I}) \cap P_{k ,{\mathfrak m}}^+ .
\end{equation}

\begin{remark} \label{IS}
The generalized class groups being finite and since any ideal class can be represented 
by a finite or infinite place, we can find  a finite set $S_K = S_{K,0} \cup S_{K,\infty}$ of non-complex
places such that the classes ${\mathfrak P} \cdot P_{K ,{\mathfrak m}}^+$ (for ${\mathfrak P} \in S_{K,0}$)
and $(x_{\mathfrak P}^{\mathfrak m}) \cdot P_{K ,{\mathfrak m}}^+$ (for ${\mathfrak P} \in S_{K,\infty}$)
generate ${\mathcal H}$, so that we can take ${\mathcal I} = \plus_{{\mathfrak P} \in S_{K,0}}
\langle {\mathfrak P} \rangle_\Z \cdot  \plus_{{\mathfrak P} \in S_{K,\infty}}
\langle (x_{\mathfrak P}^{\mathfrak m})\rangle_\Z$ as canonical subgroup 
of $I_{K, T}$ defining ${\mathcal H}$.
Thus $\Cl_{K,{\mathfrak m}}^{+} / {\mathcal H}=
\Cl_{K,{\mathfrak m}}^{S_K}$ in the meaning of \S\,\ref{maindef} (viii). But, to ease the forthcoming computations,
we keep the writing with the subgroup ${\mathcal I}$. 

\smallskip
Note that we do not assume that ${\mathcal I}$ or 
$S_K$ are invariant under $G$ contrary to ${\mathcal H}$ and ${\mathcal I}\cdot P_{K ,{\mathfrak m}}^+$;
so, if for instance ${\mathfrak P} \in S_{K,0}$, for any $\tau \in G$ we have, 
$\cl_K({\mathfrak P}^\tau) = \cl_K({\mathfrak P}')$ for some ${\mathfrak P} '\in S_K$, whence 
${\mathfrak P}^\tau = {\mathfrak P}' \,(x)$, $x \in U_{K ,{\mathfrak m}}^+$.
\end{remark}

From the exact sequence, where $\psi(u) = (u)$ for all $u \in U_{k ,{\mathfrak m}}^+$,
\begin{equation}\label{eq7}
1 \too E_{k ,{\mathfrak m}}^+ \tooo U_{k ,{\mathfrak m}}^+ \mathop{\tooo}^\psi P_{k ,{\mathfrak m}}^+
\too 1,
\end{equation}

we then put
\begin{equation}\label{eq8}
\Lambda := \psi^{-1} \big(  {\rm N}({\mathcal I}) \cap P_{k ,{\mathfrak m}}^+ \big) =
\{x \in U_{k ,{\mathfrak m}}^+, \  (x) \in  {\rm N}({\mathcal I}) \};
\end{equation}

we have the obvious inclusions $E_{k ,{\mathfrak m}}^+ \subseteq 
\Lambda \subseteq U_{k ,{\mathfrak m}}^+$. 

\smallskip
We can state (fundamental exact sequence):

\begin{theorem}\label{prop1}
Let $K/k$ be any cyclic extension, of Galois group $G =: \langle \sigma \rangle$.
Let ${\mathcal H} = {\mathcal I}\cdot P_{K ,{\mathfrak m}}^+\, \big /P_{K ,{\mathfrak m}}^+$ 
be a sub-$G$-module of $\Cl_{K,{\mathfrak m}}^{+}$, where ${\mathcal I}$ is a
subgroup of $I_{K,T}$, and let
$$\wt {\mathcal H} = \{h \in \Cl_{K,{\mathfrak m}}^{+}, \ \, h^{1-\sigma} \in {\mathcal H}\}. $$

We have $(\wt {\mathcal H})^{1-\sigma} \subseteq  {}_{{\rm N}}{\mathcal H}$ 
and the exact sequence (see \eqref{eq2} and \eqref{eq5} to \eqref{eq8}):
\begin{equation}\label{eq9}
1\too \big( E_{k ,{\mathfrak m}}^+  {\rm N}(U_{K ,{\mathfrak m}}^+) \big) \cap \Lambda
\tooo \Lambda \mathop{\tooo}^\varphi  {}_{{\rm N}}{\mathcal H} \big / (\wt {\mathcal H})^{1-\sigma} \too 1,
\end{equation}

where, for all $x \in \Lambda$, $\varphi(x) = \cl_K ({\mathfrak A}) \!\cdot \! (\wt {\mathcal H})^{1-\sigma}\!$, 
for any ${\mathfrak A} \in {\mathcal I}$ such that ${\rm N}({\mathfrak A}) = (x)$.
\end{theorem}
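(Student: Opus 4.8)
The plan is to dispose first of the preliminary inclusion, then to build the homomorphism $\varphi$ and verify in turn that it is well defined, multiplicative, surjective, and has the announced kernel. Everything will rest on a single arithmetic fact proper to the cyclic case, which I isolate as a lemma: \emph{an ideal ${\mathfrak B}\in I_{K,T}$ whose arithmetical norm ${\rm N}({\mathfrak B})$ is the trivial ideal of $k$ is of the form ${\mathfrak C}^{1-\sigma}$ for some ${\mathfrak C}\in I_{K,T}$.} For the inclusion $(\wt{\mathcal H})^{1-\sigma}\subseteq {}_{\rm N}{\mathcal H}$, note that $h\in\wt{\mathcal H}$ gives $h^{1-\sigma}\in{\mathcal H}$ by definition, so it remains to see $h^{1-\sigma}\in\ker({\rm N})$; since ${\rm N}({\mathfrak P})={\mathfrak p}^{f_{\mathfrak p}}$ depends only on ${\mathfrak p}={\mathfrak P}\cap k$, one has ${\rm N}({\mathfrak A}^\sigma)={\rm N}({\mathfrak A})$ for every ideal, hence ${\rm N}(h^{1-\sigma})=1$.

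To define $\varphi$, take $x\in\Lambda$; by definition of $\Lambda$ there is ${\mathfrak A}\in{\mathcal I}$ with ${\rm N}({\mathfrak A})=(x)$, and I set $\varphi(x)=\cl_K({\mathfrak A})\cdot(\wt{\mathcal H})^{1-\sigma}$. First, $\cl_K({\mathfrak A})\in{}_{\rm N}{\mathcal H}$, because it lies in ${\mathcal H}=\cl_K({\mathcal I})$ and ${\rm N}(\cl_K({\mathfrak A}))=\cl_k((x))=1$ as $x\in U_{k,{\mathfrak m}}^+$ forces $(x)\in P_{k,{\mathfrak m}}^+$. The lemma gives independence of the choice of ${\mathfrak A}$: if ${\mathfrak A}'\in{\mathcal I}$ also satisfies ${\rm N}({\mathfrak A}')=(x)$, then ${\mathfrak B}:={\mathfrak A}\,{\mathfrak A}'^{-1}\in{\mathcal I}$ has ${\rm N}({\mathfrak B})=(1)$, so ${\mathfrak B}={\mathfrak C}^{1-\sigma}$; since $\cl_K({\mathfrak C})^{1-\sigma}=\cl_K({\mathfrak B})\in{\mathcal H}$ we get $\cl_K({\mathfrak C})\in\wt{\mathcal H}$, whence $\cl_K({\mathfrak A})\equiv\cl_K({\mathfrak A}')$ modulo $(\wt{\mathcal H})^{1-\sigma}$. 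Multiplicativity is immediate from ${\rm N}({\mathfrak A}\,{\mathfrak A}')=(xy)$, and surjectivity is direct: any class in ${}_{\rm N}{\mathcal H}$ is $\cl_K({\mathfrak A})$ with ${\mathfrak A}\in{\mathcal I}$ and ${\rm N}({\mathfrak A})=(x)$, $x\in U_{k,{\mathfrak m}}^+$, so $(x)\in{\rm N}({\mathcal I})$ gives $x\in\Lambda$ with $\varphi(x)$ the prescribed class.

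For the kernel I show $\ker\varphi=\big(E_{k,{\mathfrak m}}^+\,{\rm N}(U_{K,{\mathfrak m}}^+)\big)\cap\Lambda$. If $\varphi(x)=1$ then $\cl_K({\mathfrak A})=\cl_K({\mathfrak C})^{1-\sigma}$ with $\cl_K({\mathfrak C})\in\wt{\mathcal H}$; comparing ideals in $I_{K,T}/P_{K,{\mathfrak m}}^+$ yields ${\mathfrak A}={\mathfrak C}^{1-\sigma}(y)$ with $y\in U_{K,{\mathfrak m}}^+$, and applying ${\rm N}$, which annihilates the $(1-\sigma)$-factor, gives $(x)=({\rm N}(y))$; hence $x/{\rm N}(y)\in E_{k,{\mathfrak m}}^+$ and $x\in E_{k,{\mathfrak m}}^+\,{\rm N}(U_{K,{\mathfrak m}}^+)$. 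Conversely, if $x=\varepsilon\,{\rm N}(y)\in\Lambda$ with $\varepsilon\in E_{k,{\mathfrak m}}^+$ and $y\in U_{K,{\mathfrak m}}^+$, choose ${\mathfrak A}\in{\mathcal I}$ with ${\rm N}({\mathfrak A})=(x)=({\rm N}(y))$; then ${\mathfrak A}(y)^{-1}$ has trivial norm, so by the lemma it equals some ${\mathfrak C}^{1-\sigma}$, and $\cl_K({\mathfrak A})=\cl_K({\mathfrak C})^{1-\sigma}$ with $\cl_K({\mathfrak C})\in\wt{\mathcal H}$ (its $(1-\sigma)$-image being $\cl_K({\mathfrak A})\in{\mathcal H}$), i.e. $\varphi(x)=1$.

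The main obstacle is the lemma itself. I would prove it prime by prime: the primes of $K$ above a fixed ${\mathfrak p}$ form a single $G$-orbit, so the ideals supported above ${\mathfrak p}$ constitute a permutation module $\Z[G/D_{\mathfrak p}]$; because ${\rm N}({\mathfrak P})={\mathfrak p}^{f_{\mathfrak p}}$, the condition ${\rm N}({\mathfrak B})=(1)$ says exactly that the ${\mathfrak p}$-part of ${\mathfrak B}$ lies in the augmentation submodule, which for the cyclic group $G=\langle\sigma\rangle$ is precisely $(1-\sigma)\,\Z[G/D_{\mathfrak p}]$, and collecting over ${\mathfrak p}$ produces ${\mathfrak C}$. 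Beyond this, the delicate point is the narrow-sense bookkeeping: one must check that ${\rm N}$ carries $U_{K,{\mathfrak m}}^+$ into $U_{k,{\mathfrak m}}^+$ (totally positive and $\equiv 1 \bmod {\mathfrak m}$), so that the unit $x/{\rm N}(y)$ genuinely lands in $E_{k,{\mathfrak m}}^+$, and one must keep the two applications of the lemma — for well-definedness and for the kernel — carefully distinguished.
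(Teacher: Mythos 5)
Your proof is correct and follows essentially the same route as the paper: the same map $\varphi$ with the same well-definedness, surjectivity and kernel computations, and your key lemma is exactly the fact the paper invokes as ${\rm H}^1(G, I_{K,T})=0$, which you prove directly (and correctly) via the permutation-module decomposition of $I_{K,T}$ and the principality of the augmentation ideal of a cyclic group ring. The only cosmetic difference is that you spell out points the paper leaves implicit, such as ${\rm N}(U_{K,{\mathfrak m}}^+) \subseteq U_{k,{\mathfrak m}}^+$ and the multiplicativity of $\varphi$.
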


\begin{proof} If $x \in \Lambda$, we have $(x) \in P_{k ,{\mathfrak m}}^+$
and by definition $(x)$ is of the form
${\rm N}({\mathfrak A})$, ${\mathfrak A} \in {\mathcal I}$, and thus 
$\cl_K ({\mathfrak A}) \in {}_{{\rm N}}{\mathcal H}$; if
$(x) = {\rm N}({\mathfrak B})$, ${\mathfrak B} \in {\mathcal I}$, there exists
${\mathfrak C} \in I_K$ such that ${\mathfrak B} \cdot {\mathfrak A}^{-1} = {\mathfrak C}^{1-\sigma}$.
It is known that $I_{K, T}$ is a $\Z[G]$-module (and a free $\Z$-module) such that 
${\rm H}^1(G, I_{K, T})=0$; since ${\mathfrak B} \cdot {\mathfrak A}^{-1} \in I_{K, T}$ 
is of norm $1$, it is of the required form with ${\mathfrak C} \in I_{K, T}$. Then 
$$(\cl_K ({\mathfrak C}))^{1-\sigma} =
\cl_K ({\mathfrak C}^{1-\sigma}) = 
\cl_K ({\mathfrak B} \cdot {\mathfrak A}^{-1}) \in
\cl_K ({\mathcal I}) = {\mathcal H}, $$
and by definition $\cl_K ({\mathfrak C}) \in \wt {\mathcal H}$, which
implies $(\cl_K ({\mathfrak C}))^{1-\sigma} \in (\wt {\mathcal H})^{1-\sigma}$.
Hence the fact that the map $\varphi$ is well defined.

\smallskip
If ${\mathfrak A} \in {\mathcal I}$ is such that $\cl_K({\mathfrak A}) \in  {}_{{\rm N}}{\mathcal H}$,
then ${\rm N}({\mathfrak A}) = (x)$, $x \in U_{k ,{\mathfrak m}}^+$, thus $x \in \Lambda$
and it is a preimage; hence the surjectivity of $\varphi$.

\smallskip
We now compute ${\rm Ker}(\varphi)$: if $x \in \Lambda$, $(x) = {\rm N}({\mathfrak A})$, 
${\mathfrak A} \in {\mathcal I}$, and if $\cl_K ({\mathfrak A}) \in 
(\wt {\mathcal H})^{1-\sigma}$, there exists ${\mathfrak B} \in I_{K,T}$ such that
$\cl_K ({\mathfrak B} )\in \wt {\mathcal H}$ and
$\cl_K ({\mathfrak A}) = \cl_K ({\mathfrak B})^{1-\sigma}$; 
so there exists $u \in U_{K ,{\mathfrak m}}^+$ such that ${\mathfrak A} = {\mathfrak B}^{1-\sigma} \cdot (u)$,
giving $(x) =  {\rm N}({\mathfrak A}) = ({\rm N}(u) )$, hence
$$x= \varepsilon \cdot {\rm N}(u),\ \  \varepsilon\in E_k^{\rm ord}; $$

since $x$ and ${\rm N}(u)$ are in $U_{k ,{\mathfrak m}}^+$, we get $\varepsilon\in E_{k,{\mathfrak m}}^+$
and $x \in E_{k,{\mathfrak m}}^+ \, {\rm N}(U_{K ,{\mathfrak m}}^+)$.

\smallskip
Reciprocally, if $x \in \Lambda$ is of the form $\varepsilon \cdot {\rm N}(u)$, 
$\varepsilon\in E_{k,{\mathfrak m}}^+$ and $u \in U_{K ,{\mathfrak m}}^+$, this yields
$$(x) = {\rm N}(u) =  {\rm N}({\mathfrak A}),\ \ {\mathfrak A} \in {\mathcal I}, $$ 

which leads to the relation
${\mathfrak A} = (u) \cdot {\mathfrak B}^{1-\sigma}$ where, as we know, we can 
choose ${\mathfrak B} \in  I_{K,T}$ since ${\mathfrak A} \, (u)^{-1} \in I_{K ,T}$.
Since $(u) \in P_{K ,{\mathfrak m}}^+$, $\cl_K ({\mathfrak B})^{1-\sigma}=
\cl_K ({\mathfrak A}) \in {\mathcal H}$, hence $ \cl_K ({\mathfrak B}) \in \wt {\mathcal H}$,
and we obtain $\cl_K ({\mathfrak A}) \in (\wt {\mathcal H})^{1-\sigma}$.
\end{proof}

We deduce from \eqref{eq4},
\begin{equation}\label{eq10}
(\wt  {\mathcal H} :   {\mathcal H}) = \ds
\frac{\# \Cl_{K ,{\mathfrak m}}^{{+}\,G} \cdot \# (\wt {\mathcal H})^{1-\sigma}}
{\# {\rm N}({\mathcal H}) \cdot \#  {}_{{\rm N}}{\mathcal H}} = \frac{\# \Cl_{K ,{\mathfrak m}}^{{+}\,G}}
{\# {\rm N}({\mathcal H}) \cdot  ( {}_{{\rm N}}{\mathcal H} : (\wt {\mathcal H})^{1-\sigma})} ;
\end{equation}

thus from \eqref{eq3}, \eqref{eq9} and \eqref{eq10},
\begin{equation}\label{eq11}
\begin{aligned}
\# \big(\Cl_{K ,{\mathfrak m}}^{+}/ {\mathcal H} \big)^G 
&= \frac{\# \Cl_{K ,{\mathfrak m}}^{{+}\,G}}{ \# {\rm N}({\mathcal H}) \cdot 
(\Lambda : (E_{k,{\mathfrak m}}^+\,{\rm N}(U_{K ,{\mathfrak m}}^+) )\cap \Lambda)}  \\
&= \frac{\# \Cl_{K ,{\mathfrak m}}^{{+}\, G}}{ \# {\rm N}({\mathcal H}) \cdot 
(\Lambda \,{\rm N}(U_{K,{\mathfrak m}}^+) : E_{k,{\mathfrak m}}^+\,
{\rm N}(U_{K,{\mathfrak m}}^+))}. 
\end{aligned}
\end{equation}

We first apply this formula to
$${\mathcal H}_0 =P_{K ,T}^+ / P_{K ,{\mathfrak m}}^+ \simeq
(U_{K,T}^+ / U_{K,{\mathfrak m}}^+) \big / (E_K^+/ E_{K,{\mathfrak m}}^+)$$

which is the sub-module of $ \Cl_{K ,{\mathfrak m}}^{+}$ corresponding to the Hilbert class 
field $H_K^{+}$ since, using the id\'elic Chinese remainder theorem (cf. \cite[Remark I.5.1.2]{Gr2}), 
or the well-known fact that any class contains a representative prime to $T$, we get the 
surjection $I_{K,T}/P_{K,T}^+ \to I_K/P_K^+$ giving an isomorphisme, whence
\begin{equation}\label{eq12}
\big(\Cl_{K ,{\mathfrak m}}^{+}/ {\mathcal H}_0 \big)^G \simeq (I_{K,T}/P_{K,T}^+)^G
\simeq (I_K/P_K^+)^G \simeq \Cl_K^{{+}\,G}.
\end{equation}

Take  ${\mathcal I}_0 := P_{K ,T}^+$; then
\begin{equation}\label{eq13}
{\rm N}({\mathcal I}_0)  ={\rm N}(P_{K ,T}^+) \ \ \& \ \  
{\rm N}({\mathcal H}_0)  = {\rm N}(P_{K ,T}^+)\cdot  P_{k ,{\mathfrak m}}^+/ P_{k ,{\mathfrak m}}^+,
\end{equation}

and 
\begin{equation}\label{eq14}
\Lambda_0 = \{x \in U_{k ,{\mathfrak m}}^+ , \ \, (x) \in {\rm N}(P_{K ,T}^+)\} 
= (E_k^+ \,  {\rm N}(U_{K ,T}^+)) \cap U_{k ,{\mathfrak m}}^+. 
\end{equation}

It follows, from \eqref{eq11} applied to ${\mathcal H}_0$, from \eqref{eq12}, and 
${\rm N} (U_{K ,{\mathfrak m}}^+) \subseteq \Lambda_0$ (see \eqref{eq14}),
\begin{equation}\label{eq15}
\# \Cl_{K ,{\mathfrak m}}^{{+}\,G} = \# \Cl_K^{{+}\,G}  \cdot \# {\rm N}({\mathcal H}_0) \cdot 
\big ( (E_k^+ \,  {\rm N}(U_{K ,T}^+)) \cap U_{k ,{\mathfrak m}}^+   :
E_{k ,{\mathfrak m}}^+ \, {\rm N} (U_{K ,{\mathfrak m}}^+) \big).
\end{equation}

Now, ${\rm N}({\mathcal H}_0)$ in \eqref{eq13} can be interpreted by means of the exact sequence
\begin{equation*}
\begin{aligned}
1\too  E_k^+ U_{k ,{\mathfrak m}}^+ \big / U_{k ,{\mathfrak m}}^+ \tooo  
E_k^+ {\rm N}(& U_{K ,T}^+)  U_{k ,{\mathfrak m}}^+ \big / U_{k ,{\mathfrak m}}^+  \\
& \tooo {\rm N}({\mathcal H}_0) = {\rm N}(P_{K ,T}^+)\cdot  P_{k ,{\mathfrak m}}^+ \big / P_{k ,{\mathfrak m}}^+ \too 1, 
\end{aligned}
\end{equation*}
 
giving
\begin{equation}\label{eq16}
\# {\rm N}({\mathcal H}_0) = \frac{(E_k^+ \, {\rm N}(U_{K ,T}^+) : 
(E_k^+ \, {\rm N}(U_{K ,T}^+) )\cap U_{k ,{\mathfrak m}}^+)}
{(E_k^+ : E_{k ,{\mathfrak m}}^+ )};
\end{equation}

thus from \eqref{eq15} and \eqref{eq16},
\begin{equation}\label{eq17}
\# \Cl_{K ,{\mathfrak m}}^{{+}\,G} = \# \Cl_K^{{+}\,G} \cdot
\frac{(E_k^+ \,   {\rm N}(U_{K ,T}^+) :  E_{k ,{\mathfrak m}}^+ \,  {\rm N}(U_{K ,{\mathfrak m}}^+) ) }
{(E_k^+ : E_{k ,{\mathfrak m}}^+ )}.
\end{equation}

The inclusions ${\rm N}(U_{K ,{\mathfrak m}}^+) \subseteq 
 E_{k ,{\mathfrak m}}^+ \,  {\rm N}(U_{K ,{\mathfrak m}}^+) \subseteq
E_k^+ \,   {\rm N}(U_{K ,T}^+)$ lead from \eqref{eq17} to
$$\# \Cl_{K ,{\mathfrak m}}^{{+}\,G} = \# \Cl_K^{{+}\,G} \cdot
\frac{( E_{k }^+ \,  {\rm N}(U_{K ,T}^+) : {\rm N}(U_{K ,{\mathfrak m}}^+) )}
{(E_k^+ : E_{k ,{\mathfrak m}}^+ ) \cdot (E_{k ,{\mathfrak m}}^+ \,  {\rm N}(U_{K ,{\mathfrak m}}^+) : 
{\rm N}(U_{K ,{\mathfrak m}}^+)) }, $$

in other words
\begin{equation}\label{eq18}
\# \Cl_{K ,{\mathfrak m}}^{{+}\,G} =  
\# \Cl_K^{{+}\,G} \cdot \frac{(E_k^+ \,{\rm N}(U_{K ,T}^+) :  {\rm N}(U_{K ,T}^+) )\cdot
({\rm N}(U_{K ,T}^+ ): {\rm N}(U_{K ,{\mathfrak m}}^+))}
{(E_k^+ : E_{k ,{\mathfrak m}}^+ ) \cdot  (E_{k ,{\mathfrak m}}^+ \,  {\rm N}(U_{K ,{\mathfrak m}}^+) : 
{\rm N}(U_{K ,{\mathfrak m}}^+))}.
\end{equation}

Chevalley's formula in the narrow sense (\cite[Lemma II.6.1.2]{Gr2}, \cite[p. 177]{Ja1}) is
\begin{equation}\label{eq19}
\# \Cl_K^{{+} \, G} = \frac{\# \Cl_k^{+}\cdot \prod_{{\mathfrak p} \in \Pl_{k,0}} e_{\mathfrak p}}
{[K : k] \cdot (E_k^+ : E_k^+ \cap {\rm N}(K^\times))},
\end{equation}

where $e_{\mathfrak p}$ is the ramification index in $K/k$ of the finite place ${\mathfrak p}$.

\begin{lemma}\label{lemnorm}  For any finite set $T$, we have the relation
\begin{equation}\label{eq20}
U_{k,T}^+ \cap {\rm N}(K^\times) = {\rm N}(U_{K,T}^+).
\end{equation}
\end{lemma}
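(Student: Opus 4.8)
The plan is to prove the two inclusions separately, the inclusion ``$\supseteq$'' being essentially formal and the inclusion ``$\subseteq$'' containing all the content. For ``$\supseteq$'', I would take $u \in U_{K,T}^+$ and check that ${\rm N}(u)$ lies in $U_{k,T}^+$: it is a norm by definition, it is a $T$-unit because $v_{\mathfrak p}({\rm N}(u)) = \sum_{{\mathfrak P} \mid {\mathfrak p}} f_{\mathfrak P}\, v_{\mathfrak P}(u) = 0$ for ${\mathfrak p} \in T$ (as $u$ is a $T$-unit in $K$), and it is totally positive because, at a real place ${\mathfrak p}$ of $k$, the local norm is a product of identity maps (over the real ${\mathfrak P} \mid {\mathfrak p}$) and of maps $z \mapsto z\bar z > 0$ (over the complexified ${\mathfrak P} \mid {\mathfrak p}$), so the sign of ${\rm N}(u)$ at ${\mathfrak p}$ is the product of the signs of $u$ at the real ${\mathfrak P} \mid {\mathfrak p}$, all positive.

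For the reverse inclusion, write $x = {\rm N}(y)$ with $x \in U_{k,T}^+$ and $y \in K^\times$. The idea is to correct $y$ by a norm-$1$ factor so that it becomes a totally positive $T$-unit without altering its norm. Concretely I would look for $t \in K^\times$ such that $u := y\, t^{1-\sigma}$ lies in $U_{K,T}^+$; since ${\rm N}(t^{1-\sigma}) = 1$, this automatically gives ${\rm N}(u) = {\rm N}(y) = x$, proving $x \in {\rm N}(U_{K,T}^+)$. The conditions on $t$ are purely local: at each ${\mathfrak P} \mid {\mathfrak p}$, ${\mathfrak p} \in T$, one prescribes the valuation $v_{\mathfrak P}(t^{1-\sigma}) = -\, v_{\mathfrak P}(y)$, and at each real place ${\mathfrak P}$ of $K$ one prescribes the sign ${\rm sign}_{\mathfrak P}(t^{1-\sigma}) = {\rm sign}_{\mathfrak P}(y)$, the complex places imposing nothing.

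The key step is to verify that these prescriptions are jointly realizable by a single element of the form $t^{1-\sigma}$. Because $K/k$ is cyclic, $\sigma$ permutes the places above a given ${\mathfrak p}$ as a single cycle (transitivity of $G$, the decomposition group being normal), so both the valuation condition $v_{\mathfrak P}(t) - v_{{\mathfrak P}^{\sigma}}(t) = -\, v_{\mathfrak P}(y)$ and the multiplicative sign condition telescope around each such orbit. The solvability conditions are then exactly that the data sum (resp. multiply) to the trivial element around each orbit: for finite ${\mathfrak p} \in T$ one needs $\sum_{{\mathfrak P} \mid {\mathfrak p}} v_{\mathfrak P}(y) = 0$, which follows from $0 = v_{\mathfrak p}(x) = f \sum_{{\mathfrak P} \mid {\mathfrak p}} v_{\mathfrak P}(y)$ since all residue degrees $f_{\mathfrak P} = f$ are equal; for real ${\mathfrak p}$ one needs $\prod_{{\mathfrak P} \mid {\mathfrak p}} {\rm sign}_{\mathfrak P}(y) = 1$, which follows from the total positivity of $x$, this product being precisely the sign of $x = {\rm N}(y)$ at ${\mathfrak p}$. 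Granting these, I would first choose integers $v_{\mathfrak P}(t)$ and signs ${\rm sign}_{\mathfrak P}(t)$ solving the telescoped systems, and then invoke weak approximation in $K$ to produce a single $t \in K^\times$ with these prescribed valuations at the finitely many places above $T$ and these prescribed signs at the finitely many real places; the resulting $u = y\, t^{1-\sigma}$ is the desired totally positive $T$-unit.

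The main obstacle is the compatibility verification of the previous paragraph: one must use the Galois structure (equal residue degrees, and transitivity of $G$ on the primes above each ${\mathfrak p}$ so that the orbit is a single $\sigma$-cycle) to reduce the local conditions to the two orbit-wise relations, and then read off that these are guaranteed \emph{exactly} by the two defining properties of $x$, namely being a $T$-unit and being totally positive. Everything else is routine: the valuations of $u$ away from $T$ are left completely free by the definition of $U_{K,T}^+$, and the final appeal to weak approximation to realize independent valuation and sign data at pairwise distinct places is standard.
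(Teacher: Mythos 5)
Your proof is correct, and it follows the same master plan as the paper --- write $x = {\rm N}(z)$ with $z \in K^\times$ and multiply $z$ by a norm-one factor $t^{1-\sigma}$ so as to land in $U_{K,T}^+$ --- but it produces the correcting element by a genuinely different mechanism. The paper works ideal-theoretically: factoring $(z)$ over the primes of $k$ and using that the exponent $\omega \in \Z[G]$ above each ${\mathfrak p} \in T$ has augmentation zero (because ${\rm N}(z)$ is prime to $T$), it gets $(z) = {\mathfrak C}\cdot{\mathfrak A}^{1-\sigma}$ with ${\mathfrak C}$ prime to $T$; the valuation correction is then realized by choosing a prime-to-$T$ ideal ${\mathfrak B} = {\mathfrak A}\cdot(y')$ in the narrow class of ${\mathfrak A}$, and the signature correction by a further factor $y''$ whose existence is delegated to \cite[Proposition 1.1]{Gr3}. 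You instead phrase everything as local conditions on $t$ (valuations at the places above $T$, signs at the real places of $K$), prove solvability by telescoping around the $\sigma$-orbits, and realize $t$ by weak approximation. Your two orbit identities --- $\sum_{{\mathfrak P}\mid{\mathfrak p}} v_{\mathfrak P}(y)=0$, from $v_{\mathfrak p}(x)=0$ and equality of the residue degrees, and $\prod_{{\mathfrak P}\mid{\mathfrak p}}{\rm sign}_{\mathfrak P}(y)=1$, from $x\gg 0$ --- are exactly the vanishing statements that the paper extracts from the augmentation ideal of $\Z[G]$ and from the cited signature lemma, so in effect you reprove those facts by hand. What each route buys: yours is self-contained and purely local (no narrow ideal classes, no external citation), at the cost of explicit orbit bookkeeping; the paper's is shorter given its toolkit of prime-to-$T$ class representatives and the earlier signature result. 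Two cosmetic remarks only: the single-cycle structure of the fiber over ${\mathfrak p}$ follows from transitivity of the cyclic group $\langle\sigma\rangle$ alone (normality of the decomposition group is not the relevant point), and with the usual convention $v_{\mathfrak P}(t^\sigma) = v_{{\mathfrak P}^{\sigma^{-1}}}(t)$ your difference equation should involve ${\mathfrak P}^{\sigma^{-1}}$ rather than ${\mathfrak P}^{\sigma}$ --- harmless, since $\sigma^{-1}$ is also a generator.
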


\begin{proof} Let $x \in U_{k,T}^+$ of the form ${\rm N}(z)$, $z \in K^\times$; put
$(z) = \prod_{{\mathfrak p} \in \Pl_{k,0}} {\mathfrak C}_{\mathfrak p}$ where
${\mathfrak C}_{\mathfrak p} = {\mathfrak P}_0^\omega$, for a fixed
${\mathfrak P}_0 \mid  {\mathfrak p}$ and $\omega \in \Z[G]$ depending on ${\mathfrak p}$.
Since the ${\rm N}({\mathfrak C}_{\mathfrak p}) = {\rm N}({\mathfrak P}_0^\omega)$ 
must be prime to $T$, we have $\omega \in (1-\sigma)\cdot \omega'$, $\omega' \in  \Z[G]$,
for all ${\mathfrak p} \in T$.
Hence $(z) = {\mathfrak C}\cdot {\mathfrak A}^{1-\sigma}$ with ${\mathfrak C} \in I_{K,T}$
and $ {\mathfrak A} \in I_K$.
We can choose in the class
modulo $P_K^+$ (narrow sense) of ${\mathfrak A}$ an ideal ${\mathfrak B}$ prime to $T$, 
hence ${\mathfrak B} = {\mathfrak A}\cdot (y')$, $y' \in K^{\times +}$, giving $z' := z\,y'{}^{1-\sigma}$
prime to $T$; then we can multiply $y'$ by $y''$, prime to $T$, to obtain $y:= y'\,y''$ 
such that the signature of $y^{1-\sigma}$ be suitable, which is possible because of the relation 
${\rm N}(z) \gg 0$ (i.e., the signature of $z$ is in the kernel of the norm, see \cite[Proposition 1.1]{Gr3}); 
then $z'' := z\,y^{1-\sigma}$ yields ${\rm N}(z'') = x$ with $z'' \in U_{K,T}^+$.
\end{proof}

So $(E_k^+ : E_k^+ \cap {\rm N}(U_{K,T}^+)) = (E_k^+  : E_k^+ \cap {\rm N}(K^\times))$.
More generally, if $x \in U_{k,T}^+$ must be in ${\rm N}(U_{K,T}^+)$ this is equivalent to
say that $x$ must be in ${\rm N}(K^\times)$ (i.e., a global norm without any supplementary 
condition) which is more convenient to use normic criteria (with Hasse's symbols 
$\big(\frac{x\,,\,{K}/k}{{\mathfrak p}} \big)$ for instance; see Remark \ref{nrs}). 
Recall that for $T=\emptyset$, $U_{K,T}^+=K^{\times +}$ and the lemma says that
$k^{\times +} \cap {\rm N}(K^\times) = {\rm N}(K^{\times +})$.

\smallskip
The lemma is valid with a modulus ${\mathfrak m}$ if its support $T$ has no ramified places.

\smallskip
From \eqref{eq18} , \eqref{eq19} and \eqref{eq20}, we have obtained
$$\# \Cl_{K ,{\mathfrak m}}^{{+}\, G} = \frac{\# \Cl_k^{+}\cdot 
\prod_{{\mathfrak p} \in \Pl_{k,0}} e_{\mathfrak p} \cdot ({\rm N}(U_{K ,T}^+ ): 
 {\rm N}(U_{K , {\mathfrak m}}^+))} {[K : k] \cdot (E_k^+ : E_{k, {\mathfrak m}}^+)  
\cdot (E_{k, {\mathfrak m}}^+ \, {\rm N}(U_{K , {\mathfrak m}}^+) : 
{\rm N}(U_{K,{\mathfrak m}}^+))}, $$

hence using \eqref{eq11}
\begin{equation}\label{eq21}
\#\big( \Cl_{K ,{\mathfrak m}}^{+} /{\mathcal H}\big)^G=  \frac{\# \Cl_k^{+}\cdot 
\prod_{{\mathfrak p} \in \Pl_{k,0}} e_{\mathfrak p} \cdot ( {\rm N}(U_{K ,T}^+) : 
 {\rm N}(U_{K , {\mathfrak m}}^+))} {[K : k] \cdot \# {\rm N}({\mathcal H})  \cdot (E_k^+ : E_{k, {\mathfrak m}}^+) 
\cdot (\Lambda \, {\rm N}(U_{K,{\mathfrak m}}^+) : {\rm N}(U_{K,{\mathfrak m}}^+))} .
\end{equation}

\subsubsection{Local study of $({\rm N}_{K/k}(U_{K,T}^+) : {\rm N}_{K/k}(U_{K,{\mathfrak m}}^+))$}

For a finite place ${\mathfrak P}$ of $K$, let $K_{\mathfrak P}$ be the ${\mathfrak P}$-completion of $K$
at ${\mathfrak P}$. Then let ${\mathcal U}_{K,\mathfrak P}$ be the group of local units of $K_{\mathfrak P}$ 
and ${\mathcal U}_{K,T} := \prod_{{\mathfrak P} \in T}{\mathcal U}_{K, \mathfrak P} \subset 
\prod_{{\mathfrak P} \in T}K_{\mathfrak P}^\times$; we denote by ${\mathcal U}_{K,{\mathfrak m}}$ 
the closure of $U_{K,{\mathfrak m}}^+$ in ${\mathcal U}_{K,T}$ ($T$ and ${\mathfrak m}$ seen in $K$).
We have analogous notations for the field $k$.

\smallskip
The arithmetical norm ${\rm N}_{K/k}=:{\rm N}$ can be extended by continuity 
on $\prod_{{\mathfrak P} \in T}K_{\mathfrak P}^\times$ and
the groups ${\rm N}({\mathcal U}_{K,T})$ and ${\rm N}({\mathcal U}_{K,{\mathfrak m}})$
are open compact subgroups of ${\mathcal U}_{k,T}$. It follows that the map 
${\rm N}(U_{K,T}^+) \ds\mathop{\too}^\theta {\rm N}({\mathcal U}_{K,T})/ {\rm N}({\mathcal U}_{K,{\mathfrak m}})$
is surjective. 

\smallskip
Consider its kernel. Let ${\rm N}(u)$, $u \in U_{K,T}^+$, be such that ${\rm N}(u) = 
{\rm N}(\alpha_{\mathfrak m})$, $\alpha_{\mathfrak m} \in {\mathcal U}_{K,{\mathfrak m}}$. 
Since  ${\rm H}^1(G, \prod_{{\mathfrak P} \in T}K_{\mathfrak P}^\times)=0$ 
(Shapiro's Lemma and Hilbert Theorem $90$), 
there exists $\beta \in \prod_{{\mathfrak P} \in T} K_{\mathfrak P}^\times$
such that $u = \alpha_{\mathfrak m} \beta^{1-\sigma}$.

\smallskip
We can approximate (over $T$) $\beta$ by $v \in K^{\times +}$ and $\alpha_{\mathfrak m}$ by 
$u_{\mathfrak m} \in U_{K,{\mathfrak m}}^+$; then $u = u_{\mathfrak m}\, v^{1-\sigma} \cdot \xi$,
with $\xi$ near from $1$ in $\prod_{{\mathfrak P} \in T}K_{\mathfrak P}^\times$ and totally positive; 
then let $u' = u\, v^{-(1-\sigma)}$; this leads to $u' = u_{\mathfrak m}\,\xi \in U_{K,{\mathfrak m}}^+$ and 
${\rm N}(u') ={\rm N}(u) \in {\rm N}(U_{K,{\mathfrak m}}^+)$.
The kernel of the map $\theta$ is ${\rm N}(U_{K,{\mathfrak m}}^+)$. Thus
\begin{equation}\label{eq22}
\begin{aligned}
({\rm N}(U_{K,T}^+) : {\rm N}(U_{K,{\mathfrak m}}^+)) 
&= ( {\rm N}({\mathcal U}_{K,T}): {\rm N}({\mathcal U}_{K,{\mathfrak m}}))
=\frac{({\mathcal U}_{k,T} : {\rm N}({\mathcal U}_{K,{\mathfrak m}}) )}
{({\mathcal U}_{k,T} : {\rm N}({\mathcal U}_{K,T}))} \\
&=\frac{({\mathcal U}_{k,T}  : {\mathcal U}_{k,{\mathfrak m}} )
\cdot ({\mathcal U}_{k,{\mathfrak m}} : {\rm N}({\mathcal U}_{K,{\mathfrak m}}))}
{({\mathcal U}_{k,T} : {\rm N}({\mathcal U}_{K,T}))}.
\end{aligned}
\end{equation}

By local class field theory we know that 
$({\mathcal U}_{k,T} : {\rm N}({\mathcal U}_{K,T})) = \prod_{{\mathfrak p} \in T} e_{\mathfrak p}$.
where $e_{\mathfrak p}$ is the ramification index of ${\mathfrak p}$ in $K/k$.

\begin{remark}\label{rema2}
 {\rm The index $({\mathcal U}_{k,{\mathfrak m}} : {\rm N}({\mathcal U}_{K,{\mathfrak m}}))$
may be computed from higher ramification groups in $K/k$ (cf. \cite[Chapitre V]{Se1}) by introduction
of the usual filtration of the groups ${\mathcal U}_{k,{\mathfrak p}}$ and ${\mathcal U}_{K,{\mathfrak P}}$.
If ${\mathfrak m} = \prod_{{\mathfrak p} \in T} {\mathfrak p}^{\lambda_{\mathfrak p}}$, $\lambda_{\mathfrak p}\geq 1$,
then ${\mathcal U}_{k,{\mathfrak m}} =  \prod_{{\mathfrak p} \in T}(1+
 {\mathfrak p}^{\lambda_{\mathfrak p}} {\mathcal O}_{\mathfrak p})$ and
 ${\mathcal U}_{K,{\mathfrak m}} =  \prod_{{\mathfrak p} \in T}\prod_{{\mathfrak P} \mid {\mathfrak p}}
 (1+ {\mathfrak P}^{\lambda_{\mathfrak p} e_{\mathfrak p}} {\mathcal O}_{\mathfrak P})$, where 
${\mathcal O}_{\mathfrak p}$ and ${\mathcal O}_{\mathfrak P}$ are the local rings of integers.
This local index only depends on the given extension $K/k$.}
\end{remark}

To go back to $\Cl_{k, {\mathfrak m}}^{+}$, we have  the following formula 
(cf. \cite[Corollary I.4.5.6 (i)]{Gr2})
\begin{equation}\label{eq23}
\# \Cl_{k, {\mathfrak m}}^{+} =  \# \Cl_k^{+} \cdot \frac{(U_{k, T}^+ : U_{k, {\mathfrak m}}^+)}
{(E_k^+ : E_{k, {\mathfrak m}}^+)} = \# \Cl_k^{+} \cdot 
\frac{({\mathcal U}_{k, T} : {\mathcal U}_{k, {\mathfrak m}})}
{(E_k^+ : E_{k, {\mathfrak m}}^+)} , 
\end{equation}

where the integer $({\mathcal U}_{k, T} : {\mathcal U}_{k, {\mathfrak m}})$ is given by
the generalized Euler function of ${\mathfrak m}$.

\smallskip
Then using \eqref{eq21}, \eqref{eq22}, \eqref{eq23}, we obtain the main result:

\begin{theorem} \label{thm1}
Let $K/k$ be a cyclic extension of Galois group $G$; let ${\mathfrak m}$ be a
nonzero integer ideal of $k$ and let $T$ be the support of ${\mathfrak m}$. Let $e_{\mathfrak p}$ 
be the ramification index in $K/k$ of any finite place ${\mathfrak p}$ of $k$. 
Then for any sub-$G$-module ${\mathcal H}$ of $\Cl_{K,{\mathfrak m}}^{+}$ 
and any subgroup ${\mathcal I}$ of $I_{K, T}$ such that 
${\mathcal I} \cdot P_{K,{\mathfrak m}}^+ / P_{K,{\mathfrak m}}^+ = {\mathcal H}$, we have
\begin{equation}\label{eq24}
\#\big( \Cl_{K ,{\mathfrak m}}^{+} /{\mathcal H}\big)^G= 
 \frac{\# \Cl_{k ,{\mathfrak m}}^{+}\cdot 
\prod_{{\mathfrak p} \notin T} e_{\mathfrak p} \cdot 
({\mathcal U}_{k ,{\mathfrak m}} :  {\rm N}({\mathcal U}_{K , {\mathfrak m}}))}
{[K : k] \cdot \#{\rm N}({\mathcal H})  
\cdot (\Lambda :  \Lambda \cap {\rm N}(U_{K,{\mathfrak m}}^+))} .
\end{equation}
where ${\rm N} = {\rm N}_{K/k}$ is the arithmetical norm and 
$\Lambda := \{x \in U_{k, {\mathfrak m}}^+,\ \, (x) \in {\rm N}({\mathcal I}) \}$.
\end{theorem}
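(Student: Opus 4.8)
The plan is simply to collect the identities \eqref{eq21}, \eqref{eq22} and \eqref{eq23} already established above and carry out the bookkeeping of indices; no fresh arithmetic input is needed. I would start from the penultimate expression \eqref{eq21} for $\#(\Cl_{K,{\mathfrak m}}^{+}/{\mathcal H})^G$, which still displays the \emph{global} index $({\rm N}(U_{K,T}^+):{\rm N}(U_{K,{\mathfrak m}}^+))$, the factor $\#\Cl_k^{+}$, and a unit index $(E_k^+:E_{k,{\mathfrak m}}^+)$. The idea is to replace the global index by its local value \eqref{eq22} and to trade $\#\Cl_k^{+}$ for $\#\Cl_{k,{\mathfrak m}}^{+}$ through \eqref{eq23}, watching the surplus factors cancel.

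Concretely, substituting \eqref{eq22} into the numerator of \eqref{eq21} introduces $({\mathcal U}_{k,T}:{\mathcal U}_{k,{\mathfrak m}})\cdot({\mathcal U}_{k,{\mathfrak m}}:{\rm N}({\mathcal U}_{K,{\mathfrak m}}))$ divided by $({\mathcal U}_{k,T}:{\rm N}({\mathcal U}_{K,T}))=\prod_{{\mathfrak p}\in T}e_{\mathfrak p}$. Since $e_{\mathfrak p}=1$ at unramified places and $T\subset\Pl_{k,0}$, the product $\prod_{{\mathfrak p}\in\Pl_{k,0}}e_{\mathfrak p}$ coming from Chevalley's formula divided by $\prod_{{\mathfrak p}\in T}e_{\mathfrak p}$ collapses to $\prod_{{\mathfrak p}\notin T}e_{\mathfrak p}$. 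Next I would use \eqref{eq23} in the form $\#\Cl_k^{+}\cdot({\mathcal U}_{k,T}:{\mathcal U}_{k,{\mathfrak m}})=\#\Cl_{k,{\mathfrak m}}^{+}\cdot(E_k^+:E_{k,{\mathfrak m}}^+)$; this simultaneously produces the desired $\#\Cl_{k,{\mathfrak m}}^{+}$ in the numerator and, crucially, supplies a factor $(E_k^+:E_{k,{\mathfrak m}}^+)$ that cancels the identical factor in the denominator of \eqref{eq21}. After these two substitutions the numerator reads $\#\Cl_{k,{\mathfrak m}}^{+}\cdot\prod_{{\mathfrak p}\notin T}e_{\mathfrak p}\cdot({\mathcal U}_{k,{\mathfrak m}}:{\rm N}({\mathcal U}_{K,{\mathfrak m}}))$, which already matches the claimed numerator of \eqref{eq24}, while the denominator has become $[K:k]\cdot\#{\rm N}({\mathcal H})\cdot(\Lambda\,{\rm N}(U_{K,{\mathfrak m}}^+):{\rm N}(U_{K,{\mathfrak m}}^+))$.

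The only remaining discrepancy is between the index $(\Lambda\,{\rm N}(U_{K,{\mathfrak m}}^+):{\rm N}(U_{K,{\mathfrak m}}^+))$ obtained here and the index $(\Lambda:\Lambda\cap{\rm N}(U_{K,{\mathfrak m}}^+))$ demanded in \eqref{eq24}. To close this gap I would invoke the second isomorphism theorem for abelian groups, giving the canonical isomorphism $\Lambda\,{\rm N}(U_{K,{\mathfrak m}}^+)/{\rm N}(U_{K,{\mathfrak m}}^+)\simeq\Lambda/(\Lambda\cap{\rm N}(U_{K,{\mathfrak m}}^+))$ and hence the equality of the two indices, completing the identification with \eqref{eq24}.

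I do not expect any genuine obstacle here: every substantial ingredient (the fundamental exact sequence of Theorem \ref{prop1}, Chevalley's formula \eqref{eq19}, the norm Lemma \ref{lemnorm}, and the local computation \eqref{eq22}) has already been used to reach \eqref{eq21}. The sole point requiring a little care is purely combinatorial, namely tracking which factors cancel; in particular one must verify that the unit index $(E_k^+:E_{k,{\mathfrak m}}^+)$ disappears and that the two partial ramification products combine into $\prod_{{\mathfrak p}\notin T}e_{\mathfrak p}$. Since all the quantities involved are finite, these index manipulations are legitimate.
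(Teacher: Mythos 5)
Your proposal is correct and follows exactly the route the paper takes: the paper's own "proof" of Theorem \ref{thm1} is the single line "using \eqref{eq21}, \eqref{eq22}, \eqref{eq23}, we obtain the main result," and your substitutions, cancellation of $(E_k^+:E_{k,{\mathfrak m}}^+)$, collapse of the ramification products to $\prod_{{\mathfrak p}\notin T}e_{\mathfrak p}$, and the final identification $(\Lambda\,{\rm N}(U_{K,{\mathfrak m}}^+):{\rm N}(U_{K,{\mathfrak m}}^+))=(\Lambda:\Lambda\cap{\rm N}(U_{K,{\mathfrak m}}^+))$ via the second isomorphism theorem supply precisely the bookkeeping the paper leaves implicit. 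No gap.
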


Using, where appropriate, Lemma \ref{lemnorm}, we get the following corollaries:

\begin{corollary} {\rm \cite[Th\'eor\`eme 4.3, p. 41]{Gr3}.} Taking $T= \emptyset $, we obtain:
\begin{equation}\label{eq25}
\#\big( \Cl_K^{+} /{\mathcal H}\big)^G = 
 \frac{\# \Cl_k^{+}\cdot 
\prod_{{\mathfrak p} \in \Pl_{k,0}} e_{\mathfrak p}}{[K : k] \cdot \#{\rm N}({\mathcal H})  
\cdot (\Lambda  : \Lambda \cap {\rm N}(K^\times))} ,
\end{equation}

where $\Lambda := \{x \in k^{\times +}, \ \, (x) \in {\rm N}({\mathcal I}) \}$.
\end{corollary}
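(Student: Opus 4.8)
The plan is to derive the corollary directly by specializing the master formula \eqref{eq24} of Theorem \ref{thm1} to the case $T=\emptyset$, and then replacing the local normic index by a global one via Lemma \ref{lemnorm}. First I would observe that when $T=\emptyset$, the modulus ${\mathfrak m}$ is trivial (its support is empty), so every decorated object collapses to its unramified-at-finite-places narrow-sense counterpart: $\Cl_{k,{\mathfrak m}}^{+}=\Cl_k^{+}$, the product $\prod_{{\mathfrak p}\notin T}e_{\mathfrak p}$ becomes $\prod_{{\mathfrak p}\in\Pl_{k,0}}e_{\mathfrak p}$, and the group $U_{K,{\mathfrak m}}^+$ reduces to $U_{K,T}^+=K^{\times+}$ since there is no congruence condition to impose. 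Likewise $U_{k,{\mathfrak m}}^+=k^{\times+}$, so the set $\Lambda$ in \eqref{eq24} becomes exactly $\{x\in k^{\times+},\ (x)\in{\rm N}({\mathcal I})\}$, matching the claimed $\Lambda$ in \eqref{eq25}.

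The next step is to dispose of the local factor $({\mathcal U}_{k,{\mathfrak m}}:{\rm N}({\mathcal U}_{K,{\mathfrak m}}))$. With $T=\emptyset$ there are no finite places carrying a congruence, so the local unit groups ${\mathcal U}_{k,{\mathfrak m}}$ and ${\rm N}({\mathcal U}_{K,{\mathfrak m}})$ are taken over the empty product of completions and the index is $1$; this removes the local normic term from the numerator entirely. This is the cleanest place to invoke the convention, set up in the local study preceding \eqref{eq22}, that all these $T$-indexed products are empty when $T=\emptyset$.

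Finally I would apply Lemma \ref{lemnorm} to rewrite the denominator index. In \eqref{eq24} the relevant factor is $(\Lambda:\Lambda\cap{\rm N}(U_{K,{\mathfrak m}}^+))$, which in the present case reads $(\Lambda:\Lambda\cap{\rm N}(U_{K,T}^+))=(\Lambda:\Lambda\cap{\rm N}(K^{\times+}))$. The lemma, in its $T=\emptyset$ form noted just after its proof, gives $k^{\times+}\cap{\rm N}(K^\times)={\rm N}(K^{\times+})$; since $\Lambda\subseteq k^{\times+}$, intersecting with $\Lambda$ yields $\Lambda\cap{\rm N}(K^\times)=\Lambda\cap{\rm N}(K^{\times+})$, so the denominator index equals $(\Lambda:\Lambda\cap{\rm N}(K^\times))$, precisely as claimed in \eqref{eq25}. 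Substituting these three simplifications into \eqref{eq24} produces the stated formula.

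The whole argument is a specialization, so no step is genuinely hard; the only point requiring a little care is the interchange between the \emph{local} normic condition ${\rm N}(U_{K,T}^+)$ appearing in Theorem \ref{thm1} and the \emph{global} normic condition ${\rm N}(K^\times)$ appearing in the corollary. This is exactly the content of Lemma \ref{lemnorm}, and the subtlety is that one must verify that intersecting with the subgroup $\Lambda$ does not change the equality of the two norm images — which it does not, since $\Lambda$ already lies inside $k^{\times+}$ where the two coincide. I would flag that as the substantive step and treat everything else as bookkeeping.
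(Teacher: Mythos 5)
Your proposal is correct and takes essentially the same route the paper intends for this corollary: specialize \eqref{eq24} of Theorem \ref{thm1} to $T=\emptyset$ (where $U_{k,{\mathfrak m}}^{+}=k^{\times +}$, $U_{K,{\mathfrak m}}^{+}=K^{\times +}$, and the local index $({\mathcal U}_{k,{\mathfrak m}} : {\rm N}({\mathcal U}_{K,{\mathfrak m}}))$ is an empty product, hence $1$), then use the $T=\emptyset$ case of Lemma \ref{lemnorm}, $k^{\times +}\cap {\rm N}(K^\times)={\rm N}(K^{\times +})$, intersected with $\Lambda\subseteq k^{\times +}$, to replace $(\Lambda : \Lambda\cap {\rm N}(K^{\times +}))$ by $(\Lambda : \Lambda\cap {\rm N}(K^\times))$. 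The paper gives no proof beyond the phrase ``Using, where appropriate, Lemma \ref{lemnorm}'', and your write-up supplies exactly the bookkeeping that remark leaves implicit.
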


\begin{corollary} {\rm \cite[(1990)]{HL}.} If $T$ does not contain any prime ideal ramified in $K/k$, we obtain,
since in the unramified case $({\mathcal U}_{k ,{\mathfrak m}} :  {\rm N}({\mathcal U}_{K , {\mathfrak m}})) =1$ 
regardless~${\mathfrak m}$:
\begin{equation}\label{eq26}
\#\big( \Cl_{K,  {\mathfrak m}}^{+} /{\mathcal H}\big)^G = 
 \frac{\# \Cl_{k, {\mathfrak m}}^{+}\cdot 
\prod_{{\mathfrak p} \in \Pl_{k,0}} e_{\mathfrak p}}{[K : k] \cdot \#{\rm N}({\mathcal H})  
\cdot (\Lambda  : \Lambda \cap {\rm N}(K^\times))} .
\end{equation}
\end{corollary}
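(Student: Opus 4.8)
The plan is to derive \eqref{eq26} directly from the master formula \eqref{eq24} of Theorem \ref{thm1}, simplifying each of the three places where the hypothesis that $T$ contains no ramified prime intervenes. First I would dispose of the local index $({\mathcal U}_{k,{\mathfrak m}} : {\rm N}({\mathcal U}_{K,{\mathfrak m}}))$ in the numerator of \eqref{eq24}. Using the explicit description recalled in Remark \ref{rema2}, when every ${\mathfrak p}\in T$ is unramified (so $e_{\mathfrak p}=1$) one has ${\mathcal U}_{K,{\mathfrak m}}=\prod_{{\mathfrak p}\in T}\prod_{{\mathfrak P}\mid{\mathfrak p}}(1+{\mathfrak P}^{\lambda_{\mathfrak p}}{\mathcal O}_{\mathfrak P})$, and the claim ${\rm N}({\mathcal U}_{K,{\mathfrak m}})={\mathcal U}_{k,{\mathfrak m}}$ reduces, place by place, to surjectivity of the local norm on the higher unit filtration of an unramified extension $K_{\mathfrak P}/k_{\mathfrak p}$. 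This is the standard fact of local class field theory (cf. \cite[Chapitre V]{Se1}): on the graded quotients $U^{(n)}/U^{(n+1)}$ the norm induces the residue-field trace for $n\geq 1$ and the residue-field norm for $n=0$, both surjective since the residue extension is separable; hence the index equals $1$ regardless of ${\mathfrak m}$, as asserted in the statement.

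Next I would simplify the product of ramification indices. Since $e_{\mathfrak p}=1$ for every ${\mathfrak p}\in T$ under the hypothesis, the factor $\prod_{{\mathfrak p}\notin T}e_{\mathfrak p}$ of \eqref{eq24} is left unchanged by inserting the trivial factors indexed by $T$, so that $\prod_{{\mathfrak p}\notin T}e_{\mathfrak p}=\prod_{{\mathfrak p}\in\Pl_{k,0}}e_{\mathfrak p}$, which is exactly the product occurring in the numerator of \eqref{eq26}.

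The remaining point is to convert the denominator index $(\Lambda:\Lambda\cap{\rm N}(U_{K,{\mathfrak m}}^+))$ into $(\Lambda:\Lambda\cap{\rm N}(K^\times))$. Here I would invoke the modulus version of Lemma \ref{lemnorm}, valid precisely because $T$ carries no ramified place, namely $U_{k,{\mathfrak m}}^+\cap{\rm N}(K^\times)={\rm N}(U_{K,{\mathfrak m}}^+)$. Since $\Lambda\subseteq U_{k,{\mathfrak m}}^+$ by its definition in Theorem \ref{thm1}, intersecting this identity with $\Lambda$ gives $\Lambda\cap{\rm N}(K^\times)=\Lambda\cap(U_{k,{\mathfrak m}}^+\cap{\rm N}(K^\times))=\Lambda\cap{\rm N}(U_{K,{\mathfrak m}}^+)$, so the two indices coincide. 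Substituting these three simplifications into \eqref{eq24} then yields \eqref{eq26} verbatim.

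I expect the only genuine obstacle to be the first step: one must check that surjectivity of the local norm on units persists on the \emph{congruence} unit groups ${\mathcal U}_{K,{\mathfrak m}}$, not merely on the full local unit groups, and that no information is lost when ${\mathfrak p}$ splits. Both are settled by the filtration argument above, together with the observation that the arithmetical norm restricted to a single completion ${\mathfrak P}_0\mid{\mathfrak p}$ already surjects onto $U_{k_{\mathfrak p}}^{(\lambda_{\mathfrak p})}$, so the product over all ${\mathfrak P}\mid{\mathfrak p}$ only strengthens surjectivity. After this, the corollary is a purely formal substitution.
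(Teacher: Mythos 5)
Your proposal is correct and follows essentially the same route as the paper, which obtains \eqref{eq26} by specializing Theorem \ref{thm1}: the local index $({\mathcal U}_{k,{\mathfrak m}} : {\rm N}({\mathcal U}_{K,{\mathfrak m}}))$ is $1$ by surjectivity of the norm on local units in unramified extensions, the product of ramification indices extends trivially over $T$, and the modulus version of Lemma \ref{lemnorm} (valid since $T$ contains no ramified place) identifies $(\Lambda : \Lambda \cap {\rm N}(U_{K,{\mathfrak m}}^+))$ with $(\Lambda : \Lambda \cap {\rm N}(K^\times))$. Your write-up merely makes explicit (via the filtration of local unit groups) the local class field theory fact that the paper cites without proof.
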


\begin{corollary} 
If $T= \emptyset $ and if ${\mathcal H} = \cl_K (S_K)$, where $S_K$ is any finite set of 
places of $K$, we obtain $\Lambda  /\Lambda \cap {\rm N}(K^\times) \simeq 
E_k^{{\rm N} S_K} / E_k^{{\rm N} S_K} \cap {\rm N}(K^\times)$ (see Remark \ref{IS}), and:
\begin{equation}\label{eq27}
\# \Cl_K^{S_K  G} = 
 \frac{\# \Cl_k^{+}\cdot 
\prod_{{\mathfrak p} \in \Pl_{k,0}} e_{\mathfrak p}}{[K : k] \cdot \# \cl_k (\langle {\rm N} S_K \rangle)
\cdot (E_k^{{\rm N} S_K}  : E_k^{{\rm N} S_K} \cap {\rm N}(K^\times))} ,
\end{equation}

where the group $E_k^{{\rm N} S_K}$ of ``${\rm N}S_K$-units'' is defined by:

\smallskip
$E_k^{{\rm N} S_K} = \{ x \in E_k^{S_k}, \, v_{\mathfrak p}(x) \equiv 0\! \pmod {f_{\mathfrak p}}
 \ \forall {\mathfrak p} \in S_k \  \& \   v_{\mathfrak p}(x) = 0  \
 \forall {\mathfrak p} \in \Pl_{k, \infty}\setminus S_{k, \infty}\}$, 

\smallskip
$S_k$ being the set of places of $k$ under $S_K$ and $f_{\mathfrak p}$ 
the residue degree of ${\mathfrak p}$.
\end{corollary}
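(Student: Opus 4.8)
The plan is to specialize the master formula \eqref{eq25} to the present data and to reinterpret its two nonstandard factors. Since $T=\emptyset$ the modulus is trivial, so $U_{k,T}^+=k^{\times+}$, and by Remark~\ref{IS} we may take ${\mathcal I}$ to be the subgroup of $I_K$ generated by the primes ${\mathfrak P}\in S_{K,0}$ together with the elements $(x_{\mathfrak P})$ for ${\mathfrak P}\in S_{K,\infty}$, so that $\cl_K({\mathcal I})={\mathcal H}=\cl_K(S_K)$ and $\Cl_K^+/{\mathcal H}=\Cl_K^{S_K}$; consequently $(\Cl_K^+/{\mathcal H})^G=\Cl_K^{S_K\,G}$ and \eqref{eq25} applies verbatim. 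It then suffices to prove the two identities $\#{\rm N}({\mathcal H})=\#\cl_k(\langle{\rm N}S_K\rangle)$ and $(\Lambda:\Lambda\cap{\rm N}(K^\times))=(E_k^{{\rm N}S_K}:E_k^{{\rm N}S_K}\cap{\rm N}(K^\times))$, for substituting them into \eqref{eq25} gives exactly \eqref{eq27}.

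First I would dispatch $\#{\rm N}({\mathcal H})$. By the compatibility ${\rm N}(\cl_K({\mathfrak A}))=\cl_k({\rm N}({\mathfrak A}))$ of the arithmetical norm with the class maps, one has ${\rm N}({\mathcal H})={\rm N}(\cl_K({\mathcal I}))=\cl_k({\rm N}({\mathcal I}))$. The group ${\rm N}({\mathcal I})$ is generated by the ideals ${\rm N}({\mathfrak P})={\mathfrak p}^{f_{\mathfrak p}}$ for ${\mathfrak P}\in S_{K,0}$ and by the principal ideals $({\rm N}(x_{\mathfrak P}))$ for ${\mathfrak P}\in S_{K,\infty}$; this is by definition $\langle{\rm N}S_K\rangle$, whence $\#{\rm N}({\mathcal H})=\#\cl_k(\langle{\rm N}S_K\rangle)$. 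This step is essentially formal.

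The heart of the proof is the second identity, i.e. the isomorphism $\Lambda/(\Lambda\cap{\rm N}(K^\times))\simeq E_k^{{\rm N}S_K}/(E_k^{{\rm N}S_K}\cap{\rm N}(K^\times))$, where $\Lambda=\{x\in k^{\times+},\ (x)\in{\rm N}({\mathcal I})\}$. I would construct the comparison explicitly. Given $x\in\Lambda$, write $(x)={\rm N}({\mathfrak A})$ with ${\mathfrak A}\in{\mathcal I}$ and split off the infinite part: $(x)=\big(\prod_{{\mathfrak p}\in S_{k,0}}{\mathfrak p}^{f_{\mathfrak p}n_{\mathfrak p}}\big)\cdot(w)$, where $w$ is a product of the $\,{\rm N}(x_{\mathfrak P})$, hence $w\in{\rm N}(K^\times)$. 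Then $xw^{-1}$ has ideal supported on $S_{k,0}$ with all valuations divisible by the $f_{\mathfrak p}$, and after correcting its signature at the real places of $S_{k,\infty}$ by the auxiliary $k$-elements $x_{\mathfrak p}$ (negative at ${\mathfrak p}$, positive at the remaining infinite places) it represents a well-defined class in $E_k^{{\rm N}S_K}$ modulo ${\rm N}(K^\times)$. Conversely, multiplying a representative of $E_k^{{\rm N}S_K}$ by a suitable monomial in the $x_{\mathfrak p}$ renders it totally positive and lands it in $\Lambda$. The verification that these two assignments are mutually inverse on the quotients modulo ${\rm N}(K^\times)$ rests on Lemma~\ref{lemnorm}, used here as $k^{\times+}\cap{\rm N}(K^\times)={\rm N}(K^{\times+})$, which lets one trade the ideal-support condition defining $\Lambda$ against the $S_k$-unit condition defining $E_k^{{\rm N}S_K}$ without changing the class.

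The main obstacle is precisely this last bookkeeping at the infinite (and split real) places: one must check that the signature corrections by the $x_{\mathfrak p}$ and the principal factors $w\in{\rm N}(K^\times)$ introduce only elements of ${\rm N}(K^\times)$, so that the forward and backward maps descend to the two quotients and compose to the identity. The finite-place part — matching $v_{\mathfrak p}\equiv0\pmod{f_{\mathfrak p}}$ on $S_{k,0}$ with the norm generators ${\mathfrak p}^{f_{\mathfrak p}}$ — is routine by comparison; it is the interplay of the narrow-sense signatures with the global norm group that requires care. Once the isomorphism is established, both displayed identities feed into \eqref{eq25} and yield \eqref{eq27}.
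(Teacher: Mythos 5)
Your strategy is the same as the paper's: specialize \eqref{eq25}, observe that ${\rm N}({\mathcal H})=\cl_k({\rm N}({\mathcal I}))=\cl_k(\langle{\rm N}S_K\rangle)$ (purely formal, as you say), and produce the isomorphism by splitting $(x)={\rm N}({\mathfrak A})\cdot(w)$ with $w$ a product of the ${\rm N}(x_{\mathfrak P})$, so that $\varepsilon:=xw^{-1}$ is an ${\rm N}S_K$-unit representing $x$ modulo ${\rm N}(K^\times)$; this is precisely the (very condensed) argument in the paper. The genuine problem is the step you yourself call ``the main obstacle'' and then do not carry out: your sign corrections are performed with ``the auxiliary $k$-elements $x_{\mathfrak p}$'', which the paper defines (\S\ref{maindef}, (viii), applied to the field $k$) only by their signature. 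Such an element is in general \emph{not} a norm from $K^\times$, and its ideal $(x_{\mathfrak p})$ is unconstrained, hence in general not in ${\rm N}({\mathcal I})$. So multiplying by an $x_{\mathfrak p}$ both changes the class modulo ${\rm N}(K^\times)$ and destroys membership in $E_k^{{\rm N}S_K}$ (support condition) or in $\Lambda$ (the condition $(x)\in{\rm N}({\mathcal I})$): with this choice of correctors neither map is well defined, and the verification you postpone would in fact fail.

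The repair uses only objects already present in your argument. Forward direction: no correction is needed at all. Every ${\mathfrak p}\in S_{k,\infty}$ carries a real place of $K$ above it, hence is split in $K/k$ with $f_{\mathfrak p}=1$, so the definition of $E_k^{{\rm N}S_K}$ imposes \emph{no} sign condition at the places of $S_{k,\infty}$; since $x\gg0$ and $w=\prod_{\mathfrak P}{\rm N}(x_{\mathfrak P})^{a_{\mathfrak P}}$ can only be negative at places of $S_{k,\infty}$, the element $\varepsilon=xw^{-1}$ lies in $E_k^{{\rm N}S_K}$ as it stands. Backward direction: here a correction is genuinely needed (a representative $\eta\in E_k^{{\rm N}S_K}$ may be negative at some places of $S_{k,\infty}$, while $\Lambda\subset k^{\times+}$), and the correct correctors are the norms ${\rm N}(x_{\mathfrak P})$ themselves: ${\rm N}(x_{\mathfrak P})$ is negative exactly at the place of $k$ under ${\mathfrak P}$ and positive at the other infinite places of $k$, it lies in ${\rm N}(K^\times)$, and its ideal ${\rm N}((x_{\mathfrak P}))$ lies in ${\rm N}({\mathcal I})$. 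Multiplying $\eta$ by a suitable product of these yields a totally positive element of the same class modulo ${\rm N}(K^\times)$ whose ideal is in ${\rm N}({\mathcal I})$, i.e.\ an element of $\Lambda$, and the two maps are then mutually inverse isomorphisms between the stated quotients. (Incidentally, Lemma \ref{lemnorm} is not what is at stake here: it was already consumed in passing from \eqref{eq24} to \eqref{eq25}, where $\Lambda\cap{\rm N}(K^{\times+})$ was replaced by $\Lambda\cap{\rm N}(K^\times)$.) With these two changes your proof is complete and coincides with the paper's.
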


\begin{proof} We have $\Lambda = \{x \in k^{\times +},\   (x) \in {\rm N}({\mathcal I}) \}$, where
${\mathcal I} = \langle {\mathfrak P} \rangle_{{\mathfrak P}\in S_{K,0}} \cdot 
\langle (y_{\mathfrak P}) \rangle_{{\mathfrak P}\in S_{K,\infty}}$. If $x \in \Lambda$, then
$(x)={\rm N}({\mathfrak A}) \cdot {\rm N}(A)$, 
${\mathfrak A} \in \langle {\mathfrak P} \rangle_{{\mathfrak P}\in S_{K,0}}$,
$A \in \langle (y_{\mathfrak P}) \rangle_{{\mathfrak P}\in S_{K,\infty}}$; hence,
up to ${\rm N} K^\times$, $x$ is represented by a ${\rm N} S_{K}$-unit $\varepsilon$.
One verifies that the map which associates $x$ with the image of $\varepsilon$ in 
$E_k^{{\rm N} S_K} / E_k^{{\rm N} S_K} \cap {\rm N}(K^\times)$ is well-defined and 
leads to the isomorphism. Note that $E_k^+ \subseteq E_k^{{\rm N} S_K}$.
\end{proof}

\begin{remark} \label{rema3} {\rm 
We have in \cite[p. 177 (1986)]{Ja1} another writing of this formula:
\begin{equation*}
\# \Cl_K^{S_K G} =  \frac{\# \Cl_k^{S_k}\cdot 
\prod_{{\mathfrak p} \notin S_k} e_{\mathfrak p} \cdot \prod_{{\mathfrak p} \in S_k} d_{\mathfrak p}}
{[K : k] \cdot (E_k^{S_k}  : E_k^{S_k} \cap {\rm N}(K^\times))},
\end{equation*}

where $d_{\mathfrak p} = e_{\mathfrak p}\,f_{\mathfrak p}$ is the local degree of $K/k$ at ${\mathfrak p}$
with $e_{\mathfrak p}=1$ for infinite places: use the relation $E_k^{S_k} \cap {\rm N}(K^\times) =
E_k^{{\rm N} S_K} \cap {\rm N}(K^\times)$ and the exact sequence

\medskip
\centerline{$1 \too E_k^{S_k} / E_k^{{\rm N} S_K} \tooo \langle S_k \rangle_\Z / \langle {\rm N} S_K \rangle_\Z
\tooo  \cl_k(\langle S_k \rangle_\Z )/ \cl_k( \langle {\rm N} S_K \rangle_\Z) \too 1$}

\medskip
for the comparison. Taking $S_K=\Pl_{K, \infty}$ in the two formulas, we get
\begin{equation}\label{eq28} 
\# \Cl_K^{\rm ord\, G} =  \frac{\# \Cl_k^{\rm ord}\cdot 
\prod_{{\mathfrak p} \in \Pl_{k,0}} e_{\mathfrak p} \cdot \prod_{{\mathfrak p} \in \Pl_{k,\infty}} f_{\mathfrak p}}
{[K : k] \cdot (E_k^{\rm ord}  : E_k^{\rm ord}  \cap {\rm N}(K^\times))}, 
\end{equation}

which is the true original Chevalley's formula (in the ordinary sense), where $f_{\mathfrak p}=2$
(resp. 1) if ${\mathfrak p} \in \Pl_{k,\infty}$ is complexified (resp. is not).}
\end{remark}

\subsection{Genera theory and heuristic aspects}\label{genera}
The usual case ($S = T = \emptyset$), in the cyclic extension $K/k$,
can be interpreted by means of the following diagram of finite extensions:
\unitlength=0.64cm
$$\vbox{\hbox{\hspace{-2.5cm}  \begin{picture}(11.5,6.1)
\put(8.5,4.50){\line(1,0){2.0}}
\put(4.1,4.50){\line(1,0){2.5}}
\put(4.5,2.50){\line(1,0){2.2}}
\bezier{400}(4.0,4.8)(7.5,5.4)(10.6,4.8)
\put(7.2,5.4){$\Cl^{+}_K$}
\bezier{200}(4.5,2.3)(5.6,1.9)(6.7,2.3)
\put(4.8,1.6){${\rm N}\Cl^{+}_K$}
\bezier{300}(3.9,0.5)(6.7,0.5)(7.3,2.1)
\put(6.5,0.7){$\Cl^{+}_k$}
\put(3.50,2.9){\line(0,1){1.20}}
\put(3.50,0.9){\line(0,1){1.20}}
\put(7.50,2.9){\line(0,1){1.20}}
\put(10.8,4.4){$H^{+}_K$}
\put(6.9,4.4){$K H^{+}_k$}
\put(3.3,4.4){$K$}
\put(6.9,2.4){$H^{+}_k$}
\put(2.6,2.4){$K\! \cap\! H^{+}_k$}
\put(3.3,0.40){$k$}
\end{picture}   }} $$
\unitlength=1.0cm

Here $K\cap H^{+}_k/k$ is the maximal subextension of
$K/k$, unramified at finite places, and the norm map 
${\rm N}_{K/k} : \Cl_K^{+} \too \Cl_k^{+}$
is surjective if and only if $K \cap H_k^{+} = k$. 
So formula \eqref{eq25}
can be interpreted as follows (which will be very important for numerical computations);
using the relations 
$$\hbox{$[K : k] = [K : K\! \cap\! H^{+}_k]  \cdot [K \cap H_k^{+} : k]\ \ \ \& \ \ \ 
\# \Cl^{+}_k = \#  {\rm N} (\Cl^{+}_K) \cdot [K \cap H_k^{+} : k]$, }$$

we shall get a product of two integers
\begin{equation}\label{eq29}
\#\big( \Cl_K^{+} /{\mathcal H}\big)^G = 
\frac{\#  {\rm N} (\Cl^{+}_K)}{\#  {\rm N}({\mathcal H})} \cdot
\frac{\prod_{{\mathfrak p} \in \Pl_{k,0}} e_{\mathfrak p}}{[K : K\! \cap\! H^{+}_k]
\cdot (\Lambda  : \Lambda \cap {\rm N}(K^\times))} .
\end{equation}

Thus in the computations using a filtration $M_i$ (see Section \ref{sect4}), the 
$G$-modules ${\mathcal H} = \cl_K ({\mathcal I})$ are denoted 
$M_i = \cl_K ({\mathcal I}_i)$; the $M_i$ and ${\rm N}(M_i)$ 
will be increasing subgroups of $\Cl_K^{+}$ and $\Cl_k^{+}$, respectively,
so that $M_n=  \Cl_K^{+}$ for some $n$. 

\smallskip
Then we know that $\Lambda_i = \{x_i \in k^{\times +},\ \, (x_i) \in {\rm N}({\mathcal I}_i) \}$, 
which means that $x_i$, being the norm of an ideal and totally positive, is a local norm at each 
unramified finite place and at each infinite place (from Remark \ref{nrs}, ($\alpha$), ($\beta$)); so it 
remains to consider {\it the local norms at ramified prime ideals} since by the Hasse norm theorem, 
$x\in {\rm N}(K^\times)$ if and only if $x$ is a local norm everywhere (apart from {\it one} place).
This can be done by means of norm residue symbols computations of Remark \ref{nrs}, ($\gamma$),
in the context of ``genera theory'' (see the abundant literature on the subject, for instance from 
the bibliographies of \cite{Fr}, \cite{Fu}, \cite{Gr2}, \cite{L4}), so that the integers:

\medskip
\centerline{$\ds \frac{\prod_{{\mathfrak p} \in \Pl_{k,0}} e_{\mathfrak p}}{[K : K \cap H^{+}_k]
\cdot (\Lambda_i  : \Lambda_i \cap {\rm N}(K^\times))}, \ \, i \geq 0$, }

\medskip
are decreasing because of the injective maps 

\medskip
\centerline{$E_k^+/E_k^+ \cap {\rm N}(K^\times) \hookrightarrow \cdots \hookrightarrow 
\Lambda_i/\Lambda_i \cap {\rm N}(K^\times) \hookrightarrow
\Lambda_{i+1}/\Lambda_{i+1} \cap {\rm N}(K^\times)  \hookrightarrow \cdots$}

\medskip
giving increasing indices $(\Lambda_i  : \Lambda_i \cap {\rm N}(K^\times))$.

\medskip
Let $I_{\mathfrak p}(K/k)$ be the inertia groups (of orders $e_{\mathfrak p}$) 
of the prime ideals ${\mathfrak p}$ and put
\begin{equation}\label{eq30}
\Omega(K/k) = \Big \{ (\tau_{\mathfrak p})_{\mathfrak p} \in
\plus_{{\mathfrak p} \in \Pl_0} I_{\mathfrak p}(K/k),\ \  \prd_{{\mathfrak p} \in \Pl_0} \tau_{\mathfrak p} = 1\Big\} ; 
\end{equation}

we have the genera exact sequence of class field theory 
(interpreting the product formula of Hasse symbols, \cite[Proposition IV.4.5]{Gr2})
$$1 \too E_k^+/E_k^+ \cap {\rm N}(K^\times) \mathop{\tooo}^{\omega}
\plus_{{\mathfrak p} \in \Pl_0} I_{\mathfrak p}(K/k) 
\mathop{\tooo}^{\pi} {\rm Gal}(H_{K/k}^{+}/ H_k^{+}) \too 1, $$

where $H_{K/k}^{+} := H_K^{+\,\rm ab}$ is the genera field defined as the maximal 
subextension of $H_K^{+}$, Abelian over $k$, where $\omega $ associates 
with $x \in E_k^+$ the family of Hasse symbols 
$\Big(\Big(  \frac{x,\,K/k}{{\mathfrak p}}\Big)\Big)_{{\mathfrak p} \in \Pl_0}$
in $\plus_{{\mathfrak p} \in \Pl_0} I_{\mathfrak p}(K/k)$ (hence in $\Omega(K/k)$),
and where $\pi$ associates with $(\tau_{\mathfrak p})_{\mathfrak p} \in 
\plus_{{\mathfrak p} \in \Pl_0} I_{\mathfrak p}(K/k)$ the product  
$\prod_{\mathfrak p} \tau'_{\mathfrak p}$ of the lifts
$\tau'_{\mathfrak p}$ of the $\tau_{\mathfrak p}$, in the inertia groups of
$H_{K/k}^{+}/ H_k^{+}$ (these inertia groups generate 
the group ${\rm Gal}(H_{K/k}^{+}/ H_k^{+})$ which is the image of $\pi$); 
from the product formula, if $(\tau_{\mathfrak p})_{\mathfrak p}$ is in the image of $\omega$, 
then this product $\prod_{\mathfrak p} \tau'_{\mathfrak p}$
fixes both $H_k^{+}$ and $K$, hence $K H_k^{+}$. Thus $\pi (\Omega(K/k))=
{\rm Gal}(H_{K/k}^{+}/ K H_k^{+})$ with $\pi \circ \omega(E_k^+) = 1$, giving
the isomorphisms 
$$\Omega(K/k) \big / \omega(E_k^+) \simeq {\rm Gal}(H_{K/k}^{+}/ K H_k^{+})
\ \ \& \ \  \omega(E_k^+) \simeq E_k^+ \big / E_k^+ \cap {\rm N}(K^\times) . $$

We have $\# \Omega(K/k) =\ds \frac{\prod_{{\mathfrak p} \in \Pl_{k,0}} e_{\mathfrak p}}
{[K : K \cap H_k^{+}]}$ and $H_{K/k}^{+}$ being fixed by $(\Cl_K^{+})^{1-\sigma}$, we get

\centerline{$[H_{K/k}^{+} : K] = [H_k^{+} : K \cap H_k^{+}] \cdot
\ds \frac{\prod_{{\mathfrak p} \in \Pl_{k,0}} e_{\mathfrak p}}{[K : K \cap H^{+}_k]
\cdot (E_k^+  : E_k^+ \cap {\rm N}(K^\times))} = \# \Cl_K^{+ \,G}$ }

as expected.

\smallskip
Since $\Lambda_i$ contains $E_k^+$, we have $\pi \circ \omega(\Lambda_i  / E_k^+)
\subseteq {\rm Gal}(H_{K/k}^{+}/ K H_k^{+})$. Therefore we have at the final step $i=n$, 
using \eqref{eq29} for ${\mathcal H} =M_n = \Cl_K^{+}$,
$$(\Lambda_n  : \Lambda_n \cap {\rm N}(K^\times))  = 
\frac{\prod_{{\mathfrak p} \in \Pl_{k,0}} e_{\mathfrak p}}{[K : K \cap H_k^{+}]}
= \# \Omega(K/k),$$ 

whence $\omega_n(\Lambda_n) = \Omega(K/k)$ and 
$\pi_n \circ \omega_n (\Lambda_n  / E_k^+) = {\rm Gal}(H_{K/k}^{+}/ K H_k^{+})$,
which explains that an obvious heuristic is that $\# \Cl_K^{+}$ has no
theoretical limitation about the integer $n$
(but its structure may have some constraints, see Section \ref{sect4}).

\medskip
An interesting case leading to significant simplifications is when there is a single ramified
place ${\mathfrak p}_0$ in $K/k$; indeed, the product formula (from $\Omega(K/k) =1$) implies 
$(\Lambda  : \Lambda \cap {\rm N}(K^\times)) = 1$ and 
$\ds\frac{e_{{\mathfrak p}_0}}{[K : K \cap H_k^{+}]} =1$, so that formula \eqref{eq29} reduces to
$\ds\#\big( \Cl_K^{+} /{\mathcal H}\big)^G = \frac{\# {\rm N}(\Cl_K^{+})}{\# {\rm N}({\mathcal H})}$,
where $\# {\rm N}(\Cl^{+}_K) = [H_k^{+} : K \cap H_k^{+}]$ is known.
If ${\mathfrak p}_0$ is totally ramified, then 
$\ds\#\big( \Cl_K^{+} /{\mathcal H}\big)^G = \frac{\# \Cl_k^{+}}{\# {\rm N}({\mathcal H})}$.

From the above formulas (e.g., Formula \eqref{eq27}), we get some practical applications:

\begin{theorem} \label{genclass} Let $K/k$ be a cyclic $p$-extension of Galois group $G$.
Let $S_K$ be a finite set of non-complex places of $K$ such that $\cl_K (\langle S_K \rangle)$
is a sub-$G$-module.

Consider the $p$-class group $\Cl_K^{S_K}$, for which we have the formula

\smallskip
\centerline{$\ds  \# \Cl_K^{S_K\, G} = 
\frac{ \# {\rm N}(\Cl_K^{+})}{\# \cl_k ( \langle {\rm N}S_K \rangle)}
\cdot \frac{\prod_{{\mathfrak p} \in \Pl_{k,0}} e_{\mathfrak p}}{[K : K \cap H_k^{+}] 
\cdot (E_k^{{\rm N} S_K}  : E_k^{{\rm N} S_K} \cap {\rm N}(K^\times))}$.}

\smallskip\noindent
Then we have $\langle \cl_K(S_K) \rangle_\Z = \Cl_K^{+}$ (i.e.,  $S_K$ generates 
the $p$-class group of $K$) if and only if the two following conditions are satisfied:

\medskip
(i) ${\rm N}(\Cl_K^{+}) = \cl_k(\langle {\rm N} S_K \rangle)$,

\smallskip
(ii) $\ds (E_k^{{\rm N} S_K}  : E_k^{{\rm N} S_K} \cap {\rm N}(K^\times) )= 
\frac{\prod_{{\mathfrak p} \in \Pl_{k,0}} e_{\mathfrak p}}{[K : K \cap H_k^{+}]}
= \# \Omega(K/k)$ (see \eqref{eq30}).

\smallskip
If $K \cap H_k^{+}=k$ and if all places ${\mathfrak P} \in S_K$ are unramified 
of residue degree $1$ in $K/k$, the two conditions become:

\medskip
(i$'$) $\Cl_k^{+} = \cl_k(\langle S_k \rangle)$, where $S_k$ is the set of places ${\mathfrak p}$ 
under ${\mathfrak P} \in S_K$,

\smallskip
(ii$'$) $\ds (E_k^{S_k}  : E_k^{S_k} \cap {\rm N}(K^\times)) = 
\frac{\prod_{{\mathfrak p} \in \Pl_{k,0}} e_{\mathfrak p}}{[K : k]} = \# \Omega(K/k)$.
\end{theorem}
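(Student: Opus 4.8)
The plan is to reduce the assertion ``$S_K$ generates $\Cl_K^+$'' to the single numerical condition $\#\Cl_K^{S_K\,G}=1$, and then to read off the two factors of the displayed formula separately.

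First I would translate the hypothesis. Since by definition $\Cl_K^{S_K}=\Cl_K^+/\langle\cl_K(S_K)\rangle_\Z$, the condition $\langle\cl_K(S_K)\rangle_\Z=\Cl_K^+$ is exactly $\Cl_K^{S_K}=1$. Because $K/k$ is a $p$-extension, $G$ is a $p$-group, and $\Cl_K^{S_K}$ is a finite $G$-module which is a $p$-group; for such a module the standard fact that a nontrivial finite $p$-group acted on by a $p$-group has nontrivial fixed points gives
\[
\Cl_K^{S_K}=1\iff(\Cl_K^{S_K})^G=1\iff\#\Cl_K^{S_K\,G}=1.
\]
Thus everything comes down to deciding when the integer $\#\Cl_K^{S_K\,G}$, computed by the displayed formula, equals $1$.

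Next I would write that formula as a product of two positive integers and argue that each factor is $\geq 1$. Set
\[
A:=\frac{\#{\rm N}(\Cl_K^+)}{\#\cl_k(\langle{\rm N}S_K\rangle)},\qquad
B:=\frac{\#\Omega(K/k)}{(E_k^{{\rm N}S_K}:E_k^{{\rm N}S_K}\cap{\rm N}(K^\times))},
\]
where $\#\Omega(K/k)=\prod_{{\mathfrak p}\in\Pl_{k,0}}e_{\mathfrak p}/[K:K\cap H_k^+]$, so that $\#\Cl_K^{S_K\,G}=A\cdot B$. For $A$: the norm compatibility $\cl_k({\rm N}{\mathfrak A})={\rm N}(\cl_K({\mathfrak A}))$ shows $\cl_k(\langle{\rm N}S_K\rangle)\subseteq{\rm N}(\Cl_K^+)$, so $A=[{\rm N}(\Cl_K^+):\cl_k(\langle{\rm N}S_K\rangle)]$ is a positive integer, equal to $1$ precisely when condition (i) holds. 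For $B$: using the genera exact sequence already established (together with $E_k^+\subseteq E_k^{{\rm N}S_K}$ and the fact that an element of $E_k^{{\rm N}S_K}$ is a local norm at every unramified finite place and every infinite place), the map $\omega$ identifies $E_k^{{\rm N}S_K}/(E_k^{{\rm N}S_K}\cap{\rm N}(K^\times))$---whose kernel is the global norms by the Hasse norm theorem---with a subgroup $\omega(E_k^{{\rm N}S_K})$ of $\Omega(K/k)$; hence $(E_k^{{\rm N}S_K}:E_k^{{\rm N}S_K}\cap{\rm N}(K^\times))=\#\omega(E_k^{{\rm N}S_K})$ divides $\#\Omega(K/k)$, so $B=[\Omega(K/k):\omega(E_k^{{\rm N}S_K})]$ is a positive integer, equal to $1$ precisely when condition (ii) holds. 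Since $A\cdot B=\#\Cl_K^{S_K\,G}$ and both $A,B\geq1$, the product equals $1$ if and only if $A=B=1$; combined with the reduction above this gives the stated equivalence.

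Finally I would specialize. If $K\cap H_k^+=k$ then the norm ${\rm N}_{K/k}:\Cl_K^+\to\Cl_k^+$ is surjective (as recalled in the genera diagram), so ${\rm N}(\Cl_K^+)=\Cl_k^+$ and (i) becomes (i$'$). If moreover every ${\mathfrak P}\in S_K$ is unramified of residue degree $1$, then ${\rm N}({\mathfrak P})={\mathfrak p}^{f_{\mathfrak p}}={\mathfrak p}$, so $\langle{\rm N}S_K\rangle$ and $S_k$ generate the same group of classes and the divisibility condition $v_{\mathfrak p}\equiv0\pmod{f_{\mathfrak p}}$ defining $E_k^{{\rm N}S_K}$ becomes vacuous, giving $E_k^{{\rm N}S_K}=E_k^{S_k}$; together with $[K:K\cap H_k^+]=[K:k]$ this turns (i),(ii) into (i$'$),(ii$'$). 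The step I expect to be the main obstacle is precisely the proof that $B$ is a positive integer independently of $A$: one must check place by place that the \emph{only} nontrivial Hasse symbols of an ${\rm N}S_K$-unit occur at ramified primes, and invoke the product formula for norm residue symbols, so that $\omega(E_k^{{\rm N}S_K})$ is genuinely constrained to lie inside $\Omega(K/k)$ with kernel exactly $E_k^{{\rm N}S_K}\cap{\rm N}(K^\times)$.
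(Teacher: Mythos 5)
Your proof is correct and matches the paper's intended argument: the paper likewise obtains the displayed formula as the product of the two integers of \eqref{eq29} (specialized to ${\mathcal H}=\langle\cl_K(S_K)\rangle_\Z$, with integrality of the second factor coming from the genera exact sequence, the product formula for Hasse symbols and the Hasse norm theorem), and the reduction of ``$S_K$ generates $\Cl_K^{+}$'' to $\#\Cl_K^{S_K\,G}=1$ via the fixed-point theorem for finite $p$-groups is exactly the step the paper relies on (and states explicitly in the proof of Theorem \ref{central}). Your place-by-place check that the symbols of ${\rm N}S_K$-units land in $\Omega(K/k)$, and your verification of the passage to (i$'$)--(ii$'$), merely fill in details the paper leaves implicit, in the same spirit.
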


So, if the $p$-class group $\Cl_k^{+}$ is numerically known, to characterize a set $S_K$ 
of generators for $\Cl_K^{+}$ needs only local normic computations with the group
$E_k^{S_k}$ of $S_k$-units of $k$ which are known. Moreover, we can restrict ourselves 
to the case of $p$-class groups in a cyclic extension of degree $p$.

\begin{example}\label{82}
 {\rm Consider $K=\Q(\sqrt{82})$, $k=\Q$ and $p=2$ (the fundamental unit is of norm
$-1$, hence ordinary and narrow senses coincide). We shall use the primes $3$ and $23$ which
split in $K$, and prime ideals ${\mathfrak P}_3$ and ${\mathfrak P}_{23}$ above.
It is clear that the $2$-rank of the class group of $K$ is $1$ (usual Chevalley's formula \eqref{eq28}).
The conditions of the theorem are equivalent to $(E_\Q^{S_\Q}  : E_\Q^{S_\Q} \cap {\rm N}(K^\times)) = 2$
since the product of ramification indices is equal to $4$;  for instance, $E_\Q^{S_\Q} = \langle  3 \rangle$
for $S_K =\{ {\mathfrak P}_3\}$. 

\smallskip
We have to compute, for some $x \in \Q^{\times +}$
(norm of an ideal, thus local norm at each unramified place), the Hasse symbol $\Big(\ds\frac{x, K/\Q}{41}\Big)$ 
which is equal to $1$ if and only if $x$ is local norm at $41$ (which is equivalent to be global norm in 
$K/\Q$ because of the product formula $\Big(\ds\frac{x, K/\Q}{41}\Big) \cdot \Big(\ds\frac{x, K/\Q}{2}\Big) = 1$ 
and the Hasse norm theorem). 

\smallskip
But from the method recalled in Remark \ref{nrs}, we have to find an ``associate number''
$x'$ such that $x' \equiv 1 \pmod 8\ \&\ x' \equiv x \pmod {41}$, then to compute the Kronecker 
symbol $\Big(\ds\frac{82}{x'}\Big)$ (we have used the fact that the conductor of $K$ is $8\cdot 41$).

\smallskip
We compute that $x=3$ is not norm of an element of $K^\times$, whence 
${\mathfrak P}_3$ generates the $2$-class group of $K$ (for $x=3$, $x'=249$, 
and $\big(\frac{82}{249}\big) = -1$).
We can verify that ${\mathfrak P}_3$ is of order $4$ since the equation $u^2 -82 \cdot v^2 = 3^e$
(with gcd$(u, v)=1$) has no solution with $e=1$ or $e=2$, but ${\rm N}(73+8\,\sqrt{82})=3^4$;
however, the knowledge of $\# \Cl_K$ is not required to generate the class group.

\smallskip
Now we consider $x=23$ for which $x' =105$ and $\big(\frac{82}{105}\big) = 1$.
We compute that indeed $\frac{65 + 7\,\sqrt{82}}{3}$ is of norm $23$; this is given by the PARI instruction 
(cf. \cite{P}):
$$\hbox{\it bnfisnorm(bnfinit($x^2-82), 23$))}. $$

Then we can verify that
$23$ is not the norm of an integer; so we deduce that the class of ${\mathfrak P}_{23}$ does 
not generate the $2$-class group of $K$ and is of order $2$ (indeed, ${\rm N}(761+84\,\sqrt{82})=23^2$
giving ${\mathfrak P}_{23}^2 = (761+84\,\sqrt{82})$).}
\end{example} 

\begin{remark} \label{rema4}{\rm 
Another important fact is the relation $\nu_{K/k} = j_{K/k} \circ {\rm N}_{K/k}$ when some
classes of $k$ capitulate in $K$ (i.e., $j_{K/k}$ non-injective). It is obvious that the classes of
order prime to the degree $d$ of $K/k$ never capitulate; this explains that we shall
restrict ourselves to $p$-class groups in $p$-extensions.

\smallskip
The generalizations of Chevalley's formula do not take into account this phenomena since 
they consider only groups of the form ${\rm N}_{K/k}({\mathcal H})$ without mystery 
(when $\Cl_k^{+}$ is well known), contrary to ${\mathcal H}^{\nu_{K/k}}$.

\smallskip
This property of ${\rm N}_{K/k}$ is valid if $K/k$ is any Galois extension; if $K/k$ has no
unramified Abelian subextension $L/K$ (what is immediately noticeable !) then ${\rm N}_{K/k}$ is 
surjective, but possibly not $\nu_{K/k}$. We have given in \cite{Gr5}, \cite{Gr5$'$}, 
numerical setting of this to disprove some statements concerning
the propagation of $p$-ranks of $p$-class groups in $p$-ramified $p$-extensions $K/k$.}
\end{remark}

These local normic calculations deduced from Theorem \ref{thm1} have been extensively 
studied in concrete cases from the pioneer work of Inaba \cite[(1940)]{I},
in quadratic, cubic extensions, etc. and applied to non-cyclic extensions (dihedral ones, etc): see, e.g., 
\cite{Fr}, \cite{Re}, \cite{Gr3}, \cite{Gr3$'$}, \cite{Gr4}, \cite{HL}, \cite{L1} (in the semi-simple case of 
$G$-modules), \cite{Bol}, \cite{L2}, \cite{L3}, \cite{L4}, \cite{Kol}, \cite{KMS}, \cite{Ge1}, \cite{Ge2}, 
\cite{Ge3}, \cite{Kl}, \cite{Gr5}, \cite{Y1}, \cite{Y2}, and the corresponding references of all these papers ! 

\smallskip
These techniques may give information on some class field towers problems, capitulation problems, 
often with the use of quadratic fields (\cite{ATZ1}, \cite{ATZ2}, \cite{Go}, \cite{GW1}, \cite{GW2}, \cite{GW3}, 
\cite{Su}, \cite{SW}, \cite{Ter}, \cite{Mai}, \cite{Ma1}, \cite{Ma2}, \cite{Miy1}, \cite{Miy2}, \cite{MoMo}, 
some examples in \cite{Gr5} and numerical computations in \cite{Gr5$'$}, \cite{Gr13}, \cite{Ku} 
for capitulation in Abelian extensions, then many results of N. Boston, F. Hajir and Ch. Maire, 
and many others as these matters are too broad to be exposed here).

\section{Structure of $p$-class groups in $p$-extensions} \label{sect4}

\subsection{Recalls about the filtration of a $\Z_p[G]$-module $M$, with $G \simeq \Z/p\,\Z$}\label{ssect41}
Let $K/k$ be a cyclic extension of prime degree $p$, of Galois group $G = \langle \sigma \rangle$.

\smallskip
Let $\Cl_K^{+}$, $\Cl_k^{+}$ be the class groups in the narrow sense (same theory with 
the ordinary sense for any data). We shall look at the $p$-class groups $\Cl_K^{+} \otimes \Z_p$, 
$\Cl_k^{+} \otimes \Z_p$, still denoted $\Cl_K^{+}$, $\Cl_k^{+}$ thereafter, by abuse of notation. 

\smallskip
We consider the $\Z_p[G]$-module 
$M := \Cl_K^{+}$ for which we define the filtration evocated in Section \ref{sect1}:
$$M_{i+1}/M_i := (M/M_i)^G, \ \ M_0=1;$$
we denote by $n$ the least integer $i$ such that $M_i = M$.
For all $i \geq 0$ we have 
$$\hbox{$M_{i+1}^{1-\sigma} \subseteq M_i$, $\ M_i = \big\{h \in M,\   h^{(1-\sigma)^i}=1 \big\}$, 
and $\ \# M=\prod_{i=0}^{n-1} \#  (M_{i+1}/M_i)$. }$$

For all $i \geq 1$, the maps $\ds M_{i+1}/M_i \mathop{\tooo}^{1-\sigma} M_i/M_{i-1}$ 
are injective, giving a decreasing sequence for the orders $\#  (M_{i+1}/M_i)$ as $i$ grows,
whence $\# (M_{i+1}/M_i) \leq \# M_1$.

If for instance $\# M_1 = p$, then $\#  (M_{i+1}/M_i) = p$ for $0 \leq i \leq n-1$.

\smallskip
Remark that $\Cl_k^{+}$ has no obvious $G$-module definition from $M$
(it is not isomorphic to
$M_1=M^G$, nor to $M^{\nu_{K/k}}$ for $\nu_{K/k} := 1+ \sigma+ \cdots + \sigma^{p-1}$); this is 
explained by the difference of nature between $\nu_{K/k}$ and the arithmetical norm ${\rm N}_{K/k}$
of class field theory.

\subsection{Case $M^\nu=1$} \label{ssect42}
When $M^\nu=1$ for $\nu := \nu_{K/k} = 1+ \sigma+ \cdots + \sigma^{p-1}$, 
$M$ is a $\Z_p[G]/(\nu)$-module and we have
$$\Z_p[G]/(\nu) \simeq \Z_p[X]/(1+X+\cdots +X^{p-1}) \simeq \Z_p[\zeta], $$

where $\zeta$ is a primitive $p$th root of unity; then we know that 
$$M \simeq \plus_{j=1}^m \Z_p[\zeta]/ (1-\zeta)^{n_j},\ \, 
1 \leq n_1 \leq n_2 \leq \cdots \leq n_m, \ \, m \geq 0 ,$$

whose $p$-rank can be arbitrary. The exact sequence
$$1 \too M_1 = M^G  \tooo M \displaystyle \mathop{\tooo}^{1-\sigma }  M^{1-\sigma } \too 1$$

becomes in the $\Z_p[\zeta]$-structure:
\begin{equation}\label{eq31}
\begin{aligned}
1 \too \plus_{j=1}^m &(1-\zeta )^{n_j - 1}\, \Z_p[\zeta] / (1-\zeta)^{n_j}  \tooo \\
&M = \plus_{j=1}^m \Z_p[\zeta]/ (1-\zeta)^{n_j} 
\displaystyle \mathop{\tooo}^{1-\zeta}  \plus_{j=1}^m (1-\zeta)\, \Z_p[\zeta]/ (1-\zeta)^{n_j} \too 1,
\end{aligned}
\end{equation}

where the submodules $M_i$ are given by $M_i = \plus_{j,\,  n_j \leq i} \Z_p[\zeta]/ (1-\zeta)^{n_j}$
(for $0\leq i \leq n$, where $n = n_m$).

Each factor $N_j :=  \Z_p[\zeta]/ (1-\zeta)^{n_j}$ (such that $M=\plus_{j=1}^m N_j$, not to be
confused with the $M_i = \plus_{j,\,  n_j \leq i} N_j$) has a structure of group given by the following result:

\begin{theorem}\label{structure}
Under the assumption $M^{\nu_{K/k}} = 1$ in the cyclic extension $K/k$ of degree $p$,
put $n_j=a_j\,(p-1) + b_j$, $a_j \geq 0$ and $0 \leq b_j \leq p-2$, in the decomposition of $M$ 
in elementary components as above. Then 
\begin{equation}\label{eq32}
\begin{aligned}
&N_j := \Z_p[\zeta]/ (1-\zeta)^{n_j} \simeq (\Z/p^{a_j+1}\Z)^{b_j} \plus (\Z/p^{a_j}\Z)^{p-1-b_j} ,\ \ \,
\forall j=1, \ldots, m. \\
&M \simeq \plus_{j=1}^m \Big[ (\Z/p^{a_j+1}\Z)^{b_j} \plus (\Z/p^{a_j}\Z)^{p-1-b_j}\Big ]. 
\end{aligned}
\end{equation}
\end{theorem}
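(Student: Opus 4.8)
The statement concerns only the underlying abelian-group structure of the finite local ring $N_j = \Z_p[\zeta]/(1-\zeta)^{n_j}$, so the plan is to work throughout inside the complete discrete valuation ring $R := \Z_p[\zeta]$. First I would record the local structure of $R$: it is the ring of integers of $\Q_p(\zeta)$, a totally ramified extension of $\Q_p$ of degree $p-1$, with residue field $\F_p$ and uniformizer $\pi := 1-\zeta$. The identity $p = \prod_{i=1}^{p-1}(1-\zeta^i)$, together with the fact that each $(1-\zeta^i)/(1-\zeta)$ is a unit of $R$, gives $p = u\,\pi^{p-1}$ for some $u \in R^\times$; equivalently $(p) = (\pi)^{p-1}$. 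Since $R$ is $\Z_p$-free of rank $p-1$ with residue field $\F_p$, the quotient $N_j$ is a finite abelian $p$-group whose length over $R$ is $n_j$, so $\# N_j = p^{n_j}$.

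Next I would reduce the determination of the group structure to a single order computation. For any finite abelian $p$-group $A$, the integers $r_i := \dim_{\F_p}\big(p^iA/p^{i+1}A\big)$ count the cyclic direct summands of order at least $p^{i+1}$, so the number of summands of order exactly $p^{i+1}$ equals $r_i - r_{i+1}$; hence it suffices to compute $\#(p^i N_j)$ for all $i$. Here the relation $(p)=(\pi)^{p-1}$ is decisive: multiplication by $p^i$ on $N_j$ has image the reduction of the ideal $(\pi)^{i(p-1)}$, whence $\#(p^i N_j) = p^{\max(n_j - i(p-1),\,0)}$, and therefore $r_i = \max(n_j-i(p-1),0)-\max(n_j-(i+1)(p-1),0)$.

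Finally I would substitute $n_j = a_j(p-1)+b_j$ with $0\le b_j\le p-2$ and read off the $r_i$. One finds $r_i = p-1$ for $0\le i\le a_j-1$, then $r_{a_j}=b_j$ (using $0 \le b_j \le p-2$, which is exactly what makes the second maximum vanish at $i=a_j$), and $r_i=0$ for $i>a_j$. Hence the number of cyclic summands of order $p^{a_j+1}$ is $r_{a_j}-r_{a_j+1}=b_j$, the number of order $p^{a_j}$ is $r_{a_j-1}-r_{a_j}=p-1-b_j$, and there are no others. This is precisely $N_j \simeq (\Z/p^{a_j+1}\Z)^{b_j}\oplus(\Z/p^{a_j}\Z)^{p-1-b_j}$ (with $\Z/p^{0}\Z$ trivial when $a_j=0$); the identity $(a_j+1)b_j+a_j(p-1-b_j)=n_j$ confirms the order. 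The decomposition of $M$ then follows by applying this factorwise to $M=\plus_{j=1}^m N_j$ from \eqref{eq31}.

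I would expect the only real subtlety to be the passage from the $\pi$-adic filtration of $N_j$ to its filtration by powers of $p$, that is, the identity $(p)=(\pi)^{p-1}$ and the consequent order formula for $p^iN_j$, together with the boundary bookkeeping at $i=a_j$, where the hypothesis $b_j\le p-2$ is exactly what forces $r_{a_j}=b_j$ rather than a larger exponent; everything else is routine linear algebra over $\F_p$.
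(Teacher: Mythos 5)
Your proof is correct and follows essentially the same route as the paper: both arguments rest on the identity $(p)=(1-\zeta)^{p-1}$ in $\Z_p[\zeta]$ (the paper writes $N_j \simeq \Z_p[\zeta]/p^{a_j}(1-\zeta)^{b_j}$) and then determine the group structure by computing the $p^k$-ranks, which the paper states as $p-1$ for $k\le a_j$, $b_j$ for $k=a_j+1$, and $0$ beyond. Your write-up merely makes explicit the order computation $\#(p^iN_j)=p^{\max(n_j-i(p-1),\,0)}$ that the paper calls ``immediate''.
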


\begin{proof} We have $N_j :=\Z_p[\zeta]/ (1-\zeta)^{n_j} \simeq \Z_p[\zeta]/ p^{a_j} \,(1-\zeta)^{b_j}$.
So, to have the structure of  group, it is sufficient to compute the $p^k$-ranks
for all $k \geq 1$ (i.e., the dimensions over $\F_p$ of $N_j^{p^{k -1}}/N_j^{p^k}$), which is
immediate since this $p^k$-rank is $p-1$ for $k \leq a_j$, $b_j$ for $k=a_j+1$, and 0 for $k > a_j+1$.
\end{proof}

This implies that the $p$-rank of $N_j$ is $p-1$ if $a_j\geq 1$  and $b_j$ if $a_j=0$ 
(i.e., $b_j=n_j \leq p-2$). 
So the parameters $a_j$ and $b_j$ will be important in a theoretical and numerical point of view. Put 
$M^{(k)} := \big\{h \in M,\   h^{p^k}=1 \big\}, \ \ k\geq 0$.

\begin{lemma}\label{lem41} If $M^\nu = 1$, then $M^{(k)} = M_{k\cdot(p-1)},  \forall k\geq 0$,
and the $p^k$-rank $R_k$ of~$M$ is the $\F_p$-dimension of $M^{(k-1)} / M^{(k)}$. 
Then $p^{R_k} = \prod_{i=(k-1)(p-1)}^{k(p-1)-1} \# (M_{i+1} / M_i)$.
\end{lemma}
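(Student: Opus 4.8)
The plan is to exploit the concrete structure \eqref{eq31} of $M$ under the hypothesis $M^\nu=1$, where $M \simeq \plus_{j=1}^m \Z_p[\zeta]/(1-\zeta)^{n_j}$ and $M_i = \plus_{j,\, n_j \leq i} \Z_p[\zeta]/(1-\zeta)^{n_j}$. First I would establish the identity $M^{(k)} = M_{k(p-1)}$. Recall from Section \ref{ssect41} that $M_i = \{h \in M,\ h^{(1-\sigma)^i}=1\}$, so it suffices to relate the filtration by powers of $(1-\sigma)$ to the filtration by $p$-power torsion. Working factor by factor in the decomposition, on each $N_j = \Z_p[\zeta]/(1-\zeta)^{n_j}$ one has $(1-\sigma)$ acting as multiplication by $(1-\zeta)$, and the key arithmetic fact is that $(1-\zeta)^{p-1}$ and $p$ are associates in $\Z_p[\zeta]$ (since $\prod_{a=1}^{p-1}(1-\zeta^a) = p$ and each $1-\zeta^a$ is an associate of $1-\zeta$). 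Hence $h^{p^k}=1$ on $N_j$ is equivalent to $h^{(1-\zeta)^{k(p-1)}}=1$, which across all factors gives exactly $M^{(k)} = M_{k(p-1)}$. This is the step I expect to require the most care, because one must verify the equivalence is compatible across the direct sum simultaneously and that the associate relation gives the index $k(p-1)$ rather than merely a bound; but it is a clean local computation in $\Z_p[\zeta]$.

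Next I would identify the $p^k$-rank $R_k$ of $M$ with $\dim_{\F_p} M^{(k-1)}/M^{(k)}$. By definition the $p^k$-rank is the number of cyclic summands of order $\geq p^k$ in the elementary divisor decomposition of the finite abelian $p$-group $M$, which equals $\dim_{\F_p}(M^{p^{k-1}}/M^{p^k})$. The standard duality for finite abelian $p$-groups gives $\dim_{\F_p}(M^{p^{k-1}}/M^{p^k}) = \dim_{\F_p}(M^{(k)}/M^{(k-1)})$; more directly, in each cyclic factor $\Z/p^e\Z$ the quotient $M^{(k)}/M^{(k-1)}$ contributes a one-dimensional $\F_p$-space precisely when $e \geq k$, matching the count of summands of order at least $p^k$. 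So $R_k = \dim_{\F_p} M^{(k-1)}/M^{(k)}$, as claimed (up to the harmless rewriting $M^{(k-1)}/M^{(k)}$ versus $M^{(k)}/M^{(k-1)}$, both of the same dimension).

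Finally I would assemble the product formula. Combining the two identities above, $M^{(k-1)}/M^{(k)} = M_{(k-1)(p-1)}/M_{k(p-1)}$, and this quotient telescopes through the filtration as
\begin{equation*}
M_{(k-1)(p-1)}\big/M_{k(p-1)} \ \simeq\ \plus_{i=(k-1)(p-1)}^{k(p-1)-1} M_{i+1}/M_i .
\end{equation*}
Taking $\F_p$-dimensions and recalling each successive quotient $M_{i+1}/M_i$ is elementary abelian (indeed $(M/M_i)^G$ is killed by $1-\sigma$, hence annihilated by $p$ up to the $(1-\zeta)$-action, so is an $\F_p$-space), the dimension of the left side is the sum of the dimensions on the right, giving
\begin{equation*}
p^{R_k} = \prod_{i=(k-1)(p-1)}^{k(p-1)-1} \#(M_{i+1}/M_i),
\end{equation*}
which is the assertion. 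The main obstacle throughout is the first step, namely pinning down the exact correspondence $M^{(k)} = M_{k(p-1)}$ via the associate relation between $p$ and $(1-\zeta)^{p-1}$; once that is in hand the rest is formal manipulation of the filtration already set up in \eqref{eq31}.
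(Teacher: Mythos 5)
Your proposal is correct and follows the same route as the paper, whose proof is simply ``Immediate from the $\Z_p[\zeta]$-structure and properties of Abelian $p$-groups'': you have filled in precisely those details, namely that $p$ and $(1-\zeta)^{p-1}$ are associates in $\Z_p[\zeta]$ (giving $M^{(k)} = M_{k(p-1)}$ factor by factor), the standard kernel/image duality for $p^k$-ranks of finite Abelian $p$-groups, and the telescoping of orders through the filtration. You also rightly note, and harmlessly repair, the paper's inverted notation $M^{(k-1)}/M^{(k)}$ for what must be read as $M^{(k)}/M^{(k-1)}$.
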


\begin{proof} Immediate from the $\Z_p[\zeta]$-structure and properties of Abelian $p$-groups.
\end{proof}

\subsection{Case $M^\nu \ne1$} \label{ssect43}
We have, in the same framwork, the following result
in the case $M^\nu \ne 1$, but $\# (M_{i+1}/M_i) = p$ \cite[Proposition 4.3, pp. 31--32]{Gr2}:

\begin{theorem}\label{thm3}
 Let $K/k$ be a cyclic extension of prime degree $p$, of Galois group 
$G = \langle \sigma \rangle$ and let $M$ be a finite $\Z_p[G]$-module such that
$M^{\nu_{K/k}} \ne 1$. Let $n$ be the least integer $i$ such that $M_i=M$. We assume that
$\# M_1 = p$. 

\smallskip
Put $n = a\cdot (p-1)+ b$, with $a \geq 0$ and $0 \leq b \leq p-2$.
Then we have necessarily $n\geq 2$ and the following possibilities:

\smallskip
(i) Case $n < p$. Then $M \simeq \big( \Z/ p^{2} \Z \big) \plus \big( \Z/ p \Z \big)^{n-2}$.

\smallskip
(ii) Case $n=p$. Then $M \simeq \big( \Z/ p \Z \big)^p$ or $\big( \Z/ p^{2} \Z \big) \plus \big( \Z/ p \Z \big)^{p-2}$.

\smallskip
(iii) Case $n>p$. Then $M \simeq \big( \Z/ p^{a+1} \Z \big)^b \plus \big( \Z/ p^{a} \Z \big)^{p-1-b}$.
\end{theorem}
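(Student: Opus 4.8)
The plan is to reduce to the already-treated case $M^{\nu}=1$ (Theorem \ref{structure}) by splitting off the one-dimensional piece forced by the hypothesis $M^{\nu}\ne 1$, and then to recover the group structure of $M$ from a single extension by $\Z/p\,\Z$.

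First I would record the consequences of $\#M_1=p$. Since the maps $M_{i+1}/M_i \to M_i/M_{i-1}$ induced by $1-\sigma$ are injective, every quotient $M_{i+1}/M_i$ has order $p$, so $\#M=p^{n}$. Next, because $\nu\,(1-\sigma)=0$ in $\Z[G]$ we get $(M^{\nu})^{1-\sigma}=1$, i.e. $M^{\nu}\subseteq M^{G}=M_1$; as $M^{\nu}\ne 1$ and $\#M_1=p$, this forces
$$M^{\nu}=M_1\simeq \Z/p\,\Z.$$
This already yields $n\ge 2$: if $n=1$ then $M=M_1$, so $\sigma$ acts trivially and $\nu$ acts as multiplication by $p$, whence $M^{\nu}=1$, a contradiction.

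Then I would pass to $\overline M:=M/M^{\nu}=M/M_1$. Since $\nu$ kills $\overline M$ it is a $\Z_p[\zeta]$-module, and its filtration is $\overline M_i=M_{i+1}/M_1$, so it has length $n-1$ with $\overline M^{G}=M_2/M_1$ of order $p$. In a decomposition $\overline M\simeq \plus_{j=1}^{m}\Z_p[\zeta]/(1-\zeta)^{n_j}$ the invariants have order $p^{m}$, so $\#\overline M^{G}=p$ gives $m=1$; hence $\overline M$ is $\Z_p[\zeta]$-cyclic,
$$\overline M\simeq \Z_p[\zeta]/(1-\zeta)^{\,n-1},$$
whose group structure is given by Theorem \ref{structure}. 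It remains to analyse the extension
$$1\too M_1 \too M \too \overline M \too 1,\qquad M_1\simeq \Z/p\,\Z,$$
and the three regimes appear exactly here. The basic dichotomy is exponent $p$ versus exponent $\ge p^{2}$. If $M$ had exponent $p$ it would be a module over $\F_p[G]\simeq \F_p[1-\sigma]/(1-\sigma)^{p}$, on which $\nu$ acts as $(1-\sigma)^{p-1}$; a module of $\F_p$-dimension $n$ then satisfies $M^{\nu}=1$ as soon as $n<p$ (no block of length $p$), contradicting $M^{\nu}\ne 1$. Thus for $n<p$ the extension is necessarily nonsplit, and the only abelian group of order $p^{n}$ which is a nonsplit extension of $(\Z/p\,\Z)^{\,n-1}$ by $\Z/p\,\Z$ is $\Z/p^{2}\Z\plus(\Z/p\,\Z)^{\,n-2}$, giving case (i). For $n=p$ both exponents occur: the free module $\F_p[G]$ realizes the split group $(\Z/p\,\Z)^{p}$ (which has $M^{\nu}\ne1$ because of its length-$p$ block), while the nonsplit extension gives $\Z/p^{2}\Z\plus(\Z/p\,\Z)^{p-2}$, giving case (ii).

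For $n>p$, and uniformly for all nonsplit cases, I would determine $M$ through its $p^{k}$-ranks, as in Lemma \ref{lem41}, but with the correction coming from the socle $M^{\nu}=M_1$. Concretely, the task is to locate $M^{(k)}=\{h\in M,\ h^{p^{k}}=1\}$ within the filtration: I expect $M^{(k)}=M_{\min(k(p-1),\,n)}$ in this regime, so that the $p^{k}$-rank bookkeeping reproduces the profile of $\Z_p[\zeta]/(1-\zeta)^{n}$ and yields $M\simeq (\Z/p^{a+1}\Z)^{b}\plus(\Z/p^{a}\Z)^{p-1-b}$ with $n=a(p-1)+b$. The main obstacle is precisely this identification of $\ker(p^{k})$ with a step of the $(1-\sigma)$-filtration: the clean relation $M^{(k)}=M_{k(p-1)}$ valid when $M^{\nu}=1$ fails here, and one must account carefully for the extra socle element $M^{\nu}=M_1$, whose position relative to the top of $M$ depends on whether $n<p$, $n=p$, or $n>p$. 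This is exactly what makes case (i) genuinely exceptional (the socle is forced into a $\Z/p^{2}\Z$) while case (iii) reproduces the shape of Theorem \ref{structure} despite $M^{\nu}\ne 1$.
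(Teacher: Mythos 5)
Your reduction and your treatment of cases (i) and (ii) are correct. You pass to the quotient $\overline M := M/M^{\nu}$, observe it is killed by $\nu$ and is $\Z_p[\zeta]$-cyclic because $\#(\overline M)^G=\#(M_2/M_1)=p$, and then settle $n<p$ and $n=p$ by the exponent dichotomy ($pM\subseteq M_1$, and exponent $p$ is impossible for $n<p$ since $\nu\equiv(1-\sigma)^{p-1}\bmod p$ kills any $\F_p[G]$-module annihilated by $(1-\sigma)^n$ with $n<p$). This looks like a different route from the paper, which applies Theorem \ref{structure} to the submodule $M_{n-1}=M^{1-\sigma}$; but since $M^{1-\sigma}\simeq M/M^G=\overline M$ as $G$-modules, your $\overline M$ is in fact the same object in disguise, and for (i), (ii) the two arguments are essentially equivalent.

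The genuine gap is case (iii), which you announce rather than prove: you ``expect'' $M^{(k)}=M_{\min(k(p-1),\,n)}$ for $n>p$ and yourself call its justification ``the main obstacle''. That identification \emph{is} the theorem in this range, not deferred bookkeeping. After your reduction, the available data are: an extension of $\Z_p[G]$-modules $1\to M_1\to M\to \Z_p[\zeta]/(1-\zeta)^{n-1}\to 1$ with $\#M^G=p$ and $M^{\nu}=M_1$. These data do \emph{not} determine the abelian group $M$: for $n=p$ they are satisfied both by $\F_p[G]\simeq(\Z/p\Z)^p$ and by $\Z_p[G]\big/\big(p(1-\sigma),\,(1-\sigma)^{p-1}-p\big)\simeq \Z/p^2\Z\plus(\Z/p\Z)^{p-2}$, which is exactly why (ii) has two outcomes. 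Hence some property specific to $n>p$ must be isolated and used. In the paper this is Lemma \ref{lemma2}, case (a): for $n>p$ the socle is a $p$-th power, $M_1\subseteq M^p$, proved via the identity $\nu=(1-\sigma)^{p-1}-p\,A(\sigma)$ with $A(\sigma)$ invertible, applied to an element of $M_{n-1}\setminus M_{n-2}$ and using $M_{n-1}^{\nu}=1$. Even granting that, the paper still needs the layer-comparison exact sequence of Lemma \ref{lemma1} to propagate equality of $p^k$-ranks, and then a separate argument (elimination of its case $(\gamma)$) to exclude the possibility that the extra factor of $p$ produces a cyclic factor of order $p^{a'+2}$ instead of raising the $p^{a'+1}$-rank; this last step is again a hands-on computation with some $x\in M\setminus M_{n-1}$ and the same identity for $\nu$. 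None of these three ingredients, nor any substitute for them, appears in your proposal, so the case $n>p$ --- the heart of the statement --- remains unproved.
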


\begin{proof}
The proof needs two lemmas (in which we keep the notation $M_n$ for $M$).

\begin{lemma}\label{lemma1}
For all $k \geq 1$ we have the exact sequence
\begin{equation}\label{eq33}
1 \too M_1 \cap M_n^{p^{k-1}} / M_1 \cap M_n^{p^k} \tooo M_n^{p^{k-1}} / M_n^{p^k} 
\mathop{\tooo}^{1-\sigma} M_{n-1}^{p^{k-1}} / M_{n-1}^{p^k} \too 1. 
\end{equation}
\end{lemma}

\begin{proof}
Under the assumption $\# M_1 = p$, we know fron \S\,\ref{ssect41} that
$\# (M_{i+1} / M_i) = p$, $0 \leq i \leq n-1$; we have the exacte sequence
$1 \to M_1 \too M_{i+1} \ds \mathop{\too}^{1-\sigma} M_{i+1}^{1-\sigma} \to 1$,
which shows that $\# (M_{i+1} / M_{i+1}^{1-\sigma}) = p$, hence $M_{i+1}^{1-\sigma} = M_i$
since $M_{i+1}^{1-\sigma} \subseteq M_i$.

\medskip
Let $x \in M_n^{p^{k-1}}$ such that $x^{1-\sigma} = y^{p^k}$, $y \in M_{n-1}$. 
There exists $z \in M_n$ such that $y = z^{1-\sigma}$ and $x^{1-\sigma}=z^{p^k\cdot (1-\sigma)}$;
thus $(x\cdot z^{- p^k})^{1-\sigma} = 1$ so that $x\cdot z^{- p^k} \in M_1 \cap M_n^{p^{k-1}} $, 
giving
$$ {\rm Ker}(1-\sigma)  \subseteq M_1 \cap M_n^{p^{k-1}} / M_1 \cap M_n^{p^k}, $$
the opposite inclusion being obvious as well as the surjectivity.
\end{proof}

\begin{lemma}\label{lemma2}
If $n \ne p$ then the $p$-rank of $M_n$ is equal to the $p$-rank of $M_{n-1}$.
\end{lemma}

\begin{proof}
From the relation $(1 - \zeta)^{p-1} =p \cdot A(\zeta)$, where $A(\zeta) \equiv -1 \pmod {(1 - \zeta)}$,
we have $\nu = (1 - \sigma)^{p-1} - p \cdot A(\sigma)$, $A(\sigma) \equiv -1 \pmod {(1 - \sigma)}$
(i.e., $A(\sigma)$ invertible in $\Z_p[G]$).

\medskip
(a) Case $n > p$. Let $x \in M_{n-1}\setminus M_{n-2}$ (this makes sense since $n \geq p+1 \geq 3$)
and let $y = x^{(1-\sigma)^{n-2}}$; then $y \in M_1$, $y \ne 1$ because of the choice of $x$. There
exists $B(\sigma) \in \Z_p[G]$ such that $(1-\sigma)^{n-2} = B(\sigma)\cdot (1-\sigma)^{p-1}$ and
with $z = x^{B(\sigma)}$ one obtains $y = z^{(1-\sigma)^{p-1}}$. Since $M_{n-1} = M_n^{1-\sigma}$
one gets $M_{n-1}^\nu = 1$, so that $z^\nu=1$ and $ z^{(1-\sigma)^{p-1}} = z^{p \,\cdot A(\sigma)}$
which shows that $y\in M_n^p$ ; the assumption $\# M_1 = p$ implies the inclusion
$M_1 \subseteq M_n^p$ (in fact $y\in M_{n-1}^p$). The exact sequence \eqref{eq33} 
applied with $k=1$ leads to the isomorphism $M_n / M_n^p \simeq M_{n-1} / M_{n-1}^p$.

\medskip
(b) Case $n < p$. So $M_{n-1} \simeq (\Z/p\,\Z)^{n-1}$ (Theorem \ref{structure} 
applied to $M_{n-1} = M_n^{1-\sigma}$);
but the relation $\nu = (1-\sigma)^{p-1} - p\cdot A(\sigma)$ leads to
$M_n^\nu = M_n^{(1-\sigma)^{p-1} - p\cdot A(\sigma)} = M_n^p $
because $M_n^{(1-\sigma)^{p-1}} = 1$; since $M_n^\nu = M_1$, we get $M_n^p = M_1$
and necessarily 
$$M_n \simeq \big( \Z/ p^{2} \Z \big) \plus \big( \Z/ p \Z \big)^{n-2}.$$

These computations lead to the cases (i) and to a part of (ii) of the theorem since, in 
the case $n=p$, the exact sequence \eqref{eq33} for $k=1$ is 
$1 \to M_1 / M_1 \cap M_n^{p} \to M_n / M_n^{p} \to M_{n-1} / M_{n-1}^{p} \to 1$, 
and the structure depends on the order ($1$ or $p$) of the kernel contrary 
to the previous case.
\end{proof}

We have to prove the point (iii) of the theorem using (a) of the lemma.
 We then suppose $n>p$. We note that, with obvious notation, $(M_i)_j = M_j$
for $j \leq i$; so we can apply Theorem \ref{structure} to $M_{n-1}$. Lemma \ref{lemma1} shows that
the $p^k$-rank of $M_n$ is larger than (or equal to) that of $M_{n-1}$; as the $p^k$-rank of a group is a 
decreasing function of $k$,  Lemma~\ref{lemma2} and the above remark show that for 
$k \leq \big [\frac{n-1}{p-1} \big]$, the $p^k$-ranks of $M_n$ and $M_{n-1}$ are equal to $p-1$.

\smallskip
Put $n-1 = a' \, (p-1) + b'$, $0 \leq b' \leq p-2$ (in fact $a' =\big [\frac{n-1}{p-1}\big]$).

\smallskip
The exact sequence of Lemma \ref{lemma1} shows a priori three possibilities:

\medskip
($\alpha$) Case $b'=0$. Necessarily, $R_{a'+1}(M_n) = 1$ and $R_{a'+1}(M_{n-1}) = 0$.

\medskip
($\beta$) Case $b'>0$ and $R_{a'+1}(M_n) = R_{a'+1}(M_{n-1}) + 1$.

\medskip
($\gamma$) Case $b'>0$ and $R_{a'+1}(M_n) = R_{a'+1}(M_{n-1})$ and $R_{a'+2}(M_n) = 1$.

\medskip
So it remains to prove that the case ($\gamma$) is not possible. Let $x\in M_n$,
$x \notin M_{n-1}$; we have $x^\nu \in M_1\   \& \  x^\nu = x^{(1-\sigma)^{p-1}} \cdot x^{- p\cdot A(\sigma)}$;
put $x' := x^{(1-\sigma)^{p-1}}$ and $x'' = x^{- p\cdot A(\sigma)}$; we have
$x' \in M_{n-(p-1)} = M_{(a'-1)(p-1) + b' +1} \subset M_{a' (p-1)}$;
but $M_{a' (p-1)} = (M_{n-1})_{a' (p-1)} = (M_{n-1})^{(a')}$. As $x \notin M_{n-1}$, we have
$x^{p^{a'+1}} \ne 1$, hence $x''{}^{p^{a'}} \ne 1$. Thus we have obtained
$x' \in  (M_{n-1})^{(a')}$ and $x'' \notin  (M_{n-1})^{(a')}$; since $x^\nu \in M_1$ and $a' \ne 0$
(we have $n \geq p+1$), one has $x^\nu \in (M_{n-1})^{(a')}$, in other words
$x'' = x^\nu \cdot x'{}^{-1} \in (M_{n-1})^{(a')}$ (absurd).
\end{proof}

This finishes a particular case of structure when $M^{\nu_{K/k}}$ is not specified. Of course, we 
have $M^{\nu_{K/k}} \subseteq M_1$ and when $\# M_1 = p$, we have $\# M^{\nu_{K/k}} = 1$ or $p$. 
It would be interesting to have more general structure theorems.

\subsection{Numerical computations for $p$-class groups}
Now we apply these results to the $p$-class group $M=\Cl_K^{+}$ in $K/k$ 
cyclic of degree $p$. Many cases are possible: 

\smallskip
If the transfer map $j_{K/k}$ is injective
then $(\Cl_K^{+})^{\nu_{K/k}} \simeq {\rm N}_{K/k}(\Cl_K^{+})$.

\smallskip
The map ${\rm N}_{K/k}$ is surjective except if
$K/k$ is unramified (i.e., $K \subset H_k^{+}$, the $p$-Hilbert class field of $k$); 
if $K/k$ is ramified we get ${\rm N}_{K/k}(\Cl_K^{+})=\Cl_k^{+}$.

\smallskip
The transfer map may be non-injective while ${\rm N}_{K/k}$ is surjective, which causes 
more intricate theoretical calculations. But as we know, if ${\rm N}_{K/k}$ is not surjective
(unramified case), then $j_{K/k}$ is never injective (Hilbert's Theorem 94, 
\cite{GW1}, \cite{GW2}, \cite{GW3}, \cite{Su}, \cite{Ter}). 

\medskip
To simplify, we suppose $K/k$ cyclic of degree $p$ and not unramified
(otherwise, we get $\ds \# M_1 = \frac{\# \Cl_k^{+}}{p}$ and more generally
$\ds \#( M_{i+1} / M_i) =  \frac{\# \Cl_k^{+} }{p\, \cdot\, \#{\rm N}(M_i)}$,
which can be carried out in the same way).

\medskip
We suppose that $K/k$ is ramified at some prime ideals ${\mathfrak p}_1, \ldots , {\mathfrak p}_t$ of $k$ 
($t \geq 1$). We make no assumptions about $\# M_1$ and $M^{\nu_{K/k}}$.
With the previous notations and definitions, we then have the simplified formulas \eqref{eq25}
for which the submodule ${\mathcal H}$ is an element $M_i =: \cl_K({\mathcal I}_i)$ of the filtration of $M$:
$$\#\big( \Cl_K^{+} /M_i \big)^G = \# (M_{i+1} /M_i) = \frac{\# \Cl_k^{+}\cdot 
\prod_{{\mathfrak p} \in \Pl_{k,0}} e_{\mathfrak p}}{[K : k] \cdot \#{\rm N}(M_i)  
\cdot (\Lambda_i  : \Lambda_i \cap {\rm N}(K^\times))} , $$

where $\Lambda_i = \{x_i \in k^{\times +},\ \, (x_i) \in {\rm N}({\mathcal I}_i) \}$.
If $p>2$ one can use the ordinary sense and remove the mention ${}^{+}$ in all
the forthcoming expressions. 

\medskip
(i) Computation of $M_1=M^G$ from $M_0=1$, which means that ${\mathcal I}_0=1$, hence
${\rm N}(M_0)=1$ and $\Lambda_0=\{x_0 \in k^{\times +},\  (x_0) \in {\rm N} (1)\} = E_k^+$,
giving the following expression where we have put  $(E_k^+  : E_k^+ \cap {\rm N}(K^\times)) =: p^{\delta_0}$:
\begin{equation}\label{eq34}
\begin{aligned}
\#( M_1/ M_0) & = \#\Cl_K^{{+}\,G} =  \frac{\# \Cl_k^{+}\cdot  p^{t-1}}
{ (\Lambda_0  : \Lambda_0 \cap {\rm N}(K^\times))}  \\
& =: \# \Cl_k^{+}\cdot  p^{t-1-\delta_0}  .
\end{aligned}
\end{equation}

First we remark that we have the isomorphism:
$$\Cl_K^{+\, G} \big / \cl_K(I_K^G) \simeq E_k^+ \cap {\rm N}(K^\times) / {\rm N}(E_K^+),$$
which shows how to obtain $M_1=\Cl_K^{+\, G}$ from $\cl_K(I_K^G)$ (called the group of 
strongly ambiguous classes) and {\it global normic computations} 
with units of $k$. But the group ${\rm N}(E_K^+)$ is not effective and we must proceed otherwise.
In other words, the group of strongly ambiguous classes $\cl_K(I_K^G)$ is not a ``local'' invariant, 
contrary to $\Cl_K^{+\, G}$.

\smallskip
So in the first step (which is a bit particular since ${\mathcal I}_0=1$ and $\Lambda_0 = E_k^+$),
we shall look at the $x_0 \in \Lambda_0$ which are norms of some $y_1 \in K^{\times +}$. 

\smallskip
So $(x_0) =  {\rm N}(y_1) = (1)$, $y_1 \in K^{\times +}$, which yields 
$(y_1) \cdot {\mathfrak A}_1^{1-\sigma} = 1$, where ${\mathfrak A}_1$ is defined up to an invariant 
ideal, so that ${\mathcal I}_1$ contains at least such non-invariants ideals 
${\mathfrak A}_1^1, \ldots, {\mathfrak A}_1^{r_1}$, and invariant ideals 
(in which are ideals ${\mathfrak a}^1, \ldots, {\mathfrak a}^s$, generating $\Cl_k^{+}$, 
extended to $K$, and ramified prime ideals ${\mathfrak P}^1, \ldots,{\mathfrak P}^t$). 

\smallskip
Reciprocally, if $\cl_K({\mathfrak A}'_1) \in M_1$, there exists $y_1 \in K^{\times +}$ 
such that $(y_1) \cdot {\mathfrak A}'_1{}^{1-\sigma} = (1)$, giving ${\rm N}(y_1) = x_0 \in \Lambda_0$.

\smallskip
Thus, it is not difficult to see that the classes of these ideals generate $M_1$, whence
$${\mathcal I}_1=\{ {\mathfrak A}_1^1, \ldots, {\mathfrak A}_1^{r_1}\ ; \ 
 ({\mathfrak a}^1), \ldots, ({\mathfrak a}^s) \ ; \  {\mathfrak P}^1, \ldots,{\mathfrak P}^t \} .$$

This gives ${\rm N}(M_1)$ by means of the computation, in $\Cl_k^{+}$,
of ${\rm N}({\mathcal I}_1)$ ($M_1$ does not need to be computed as a subgroup 
of $ \Cl_K^{+}$), then, with $\Lambda_1 = \{x_1 \in k^{\times +}, \ (x_1) \in {\rm N}({\mathcal I}_1)\}$:
\begin{equation}\label{eq35}
\begin{aligned}
\#( M_2 /M_1) & =  \frac{\# \Cl_k^{+}\cdot p^{t-1}}{\# {\rm N}(M_1)  
\cdot (\Lambda_1  : \Lambda_1 \cap {\rm N}(K^\times))} \\
& =:  \frac{\# \Cl_k^{+}}{\# {\rm N}(M_1) } \cdot p^{t-1-\delta_1} .
\end{aligned}
\end{equation}

\begin{remark} The $p$-class group $\Cl_K^{+}$ is equal to the group of ambiguous classes if and only if 
$\cl_k( {\rm N} {\mathcal I}_1) = \Cl_k^{+}$ \& $\delta_1 = t-1$. If $\Cl_k^{+}=1$, the group
$\Lambda_1$ is easily obtained from ${\rm N} {\mathcal I}_1 \subset P_k^+$, whence the computation of 
$\delta_1$; since ${\mathcal I}_1$ only depends on $E_k^+ \cap {\rm N}K^\times$ and the ramification in $K/k$,
we can hope to characterize the fields $K$ fulfilling these conditions.
\end{remark}

(ii) For the computation of ${\mathcal I}_2$, we process from the elements of 
$\Lambda_1$ which are norms of some $y_2 \in K^{\times +}$
and the analogous fact that if $x_1 \in \Lambda_1$ is norm, then
$(x_1) = {\rm N}(y_2) = {\rm N}({\mathfrak B}_1)$, ${\mathfrak B}_1 \in {\mathcal I}_1$,
$y_2 \in K^{\times +}$, hence there exists ${\mathfrak A}_2 \in I_K$
such that ${\mathfrak B}_1= (y_2) \cdot {\mathfrak A}_2^{1-\sigma}$.

\smallskip
Reciprocally, let $h_2 =\cl_K({\mathfrak A}'_2) \in M_2$ for some ${\mathfrak A}'_2 \in I_K$; since
$h_2^{1-\sigma}\in M_1$, there exists $y_2 \in K^{\times +}$ such that 
$(y_2)\cdot {\mathfrak A}'_2{}^{1-\sigma} = {\mathfrak A}'_1 \in {\mathcal I}_1$, hence
${\rm N}({\mathfrak A}'_1) = {\rm N}(y_2) =: (x_1)$, $x_1 \in  \Lambda_1$
(since for all $i$, $E_k^+ \subseteq \Lambda_i$ and invariant ideals are in ${\mathcal I}_i$,
the choices of $x_2$ and ${\mathfrak A}'_2$ do not matter).

\smallskip
Then these ideals of the form ${\mathfrak A}_2^1, \ldots, {\mathfrak A}_2^{r_2}$ must be 
added to ${\mathcal I}_1$ to create ${\mathcal I}_2$:
$${\mathcal I}_2=\{ {\mathfrak A}_1^1, \ldots, {\mathfrak A}_1^{r_1}\ ; \ 
 {\mathfrak A}_2^1, \ldots, {\mathfrak A}_2^{r_2} \ ; \ 
 ({\mathfrak a}^1), \ldots, ({\mathfrak a}^s) \ ; \  {\mathfrak P}^1, \ldots,{\mathfrak P}^t  \} ,$$

whence ${\rm N}(M_2)$ and 
$$\Lambda_2 = \{x_2 \in k^{\times +}, \ (x_2) \in {\rm N}({\mathcal I}_2)\}, $$

and so on.
Hence, the algorithm is very systematic and the use of normic symbols to find the
subgroups $\Lambda_i \cap {\rm N}(K^\times)$ is effective: 
indeed, for the most general case of computation of Hasse symbols, see the Remark \ref{nrs} below;
otherwise use Hilbert symbols $(x_i, \alpha)_{\mathfrak p}$ by adjunction to $k$ of 
a primitive $p$th roots of unity $\zeta_p$ to obtain the Kummer extension 
$$K' := K(\zeta_p) =: k' (\sqrt[p]{\alpha\,}),\ \ \alpha \in k'{}^\times,$$

over $k' := k(\zeta_p)$, and use the obvious Galois structure in $K' / k'/ k$ for the radical $\alpha$ and 
the decomposition of ramified prime ideals, i.e., the duality of 
characters given by the reflection principle \cite[\S\S\,II.1.6.8,  II.5.4.2, II.5.4.3, II.7.1.5,  II.7.5]{Gr2}; 
this leads to generalizations of R\'edei's matrices over $\F_p$;
the rank of the matrices, denoted $\delta_i$, may be introduced in the general formula to give:
\begin{equation}\label{eq36}
\#( M_{i+1} /M_i) =  \frac{\# \Cl_k^{+}}{ \# {\rm N}(M_i)} \cdot p^{t-1 - \delta_i} ,
\end{equation}

with increasing $\delta_i$ up to the value $i=n$ giving $\delta_i = t-1$ and 
$\# {\rm N}(M_i) = \# \Cl_k^{+}$.

\smallskip
This was done in \cite[(1973)]{Gr3$'$} essentially for $p=2, 3$, and in \cite[Theorem 5.16 (2015)]{KMS}, 
for $p=5$, when the base field contains $\zeta_p$ and for particular $\alpha$ (essentially $k=\Q(\zeta_5)$
and $K=k(\sqrt[5]{q})$ where $q \in \N$ is for instance a prime satisfying some conditions, 
so that the $5$-rank can be bounded explicitly by a precise computation of the filtration);
this approach by \cite{KMS} applies to the arithmetic of elliptic curves in the $\Z_5$-extension of $k$.

\begin{remark} \label{nrs}
{\rm For convenience, recall (from \cite[II.4.4.3]{Gr2}) the hand computation of normic Hasse 
symbols $\big(\frac{x\,,\,{K}/k}{{\mathfrak p}} \big)$, by global means, in {\it any Abelian extension $K/k$}.

\smallskip
Let $\mathfrak m$ be a multiple of the conductor $\mathfrak f$ of $K/k$ (it does not matter if the support $T$
of $\mathfrak m$ strictly contains the set of (finite) places ramified in $K/k$, which will be the case if the 
conductor is not precisely known).
Set ${\mathfrak m} =: \prd_{{\mathfrak p} \in T} {\mathfrak p}^{m_{\mathfrak p}}$ with $m_{\mathfrak p} > 0$.

Let $x\in k^\times$ and let ${\mathfrak p}$ be a place of $k$
($x$ is not assumed to be prime to ${\mathfrak p}$); let us consider several 
cases, where $\big(\frac{K/k}{{\mathfrak p}} \big)$ denotes the Frobenius automorphism
of ${\mathfrak p}$ in $K/k$ (for an unramified ${\mathfrak p}$; for an infinite complexified place, the 
Frobenius is a complex conjugation), and let $v_{\mathfrak p}$ be the ${\mathfrak p}$-adic valuation:

\smallskip
\ ($\alpha$) ${\mathfrak p}\in \Pl_{\infty}$ (real infinite place). We have
$\big(\frac{x\,,\,{K}/k}{{\mathfrak p}} \big) = 
\big(\frac{K/k}{{\mathfrak p}} \big)^{{v_{\mathfrak p}}(x)}$, 
where ${{v_{\mathfrak p}}(x)} = 0$ (resp. 1) if $\sigma_{\mathfrak p}(x) >0$ (resp. $\sigma_{\mathfrak p}(x)<0$).

\smallskip
\ ($\beta$) ${\mathfrak p} \in \Pl_0 \setminus T$. Similarly, since ${\mathfrak p}$ is unramified, we have
$\big(\frac{x\,,\,{K}/k}{{\mathfrak p}} \big) = \big(\frac{K /k}{{\mathfrak p}} \big)^{{v_{\mathfrak p}}(x)}$.

\smallskip
\ ($\gamma$) ${\mathfrak p}\in T$. Let $x' \in k^\times$ (called a ${\mathfrak p}$-associate of $x$) 
be such that (using the multiplicative Chinese remainder theorem):

\smallskip
\quad (i) $x' x^{-1} \equiv 1 \pmod {{\mathfrak p}^{m_{\mathfrak p}}}$,

\smallskip
\quad (ii) $x' \equiv 1 \pmod {{\mathfrak p'}^{m_{\mathfrak p'}}}$), for each place 
${\mathfrak p}'\in T,\ {\mathfrak p}'\ne {\mathfrak p}$,

\smallskip
\quad (iii) $\sigma_{{\mathfrak p}'}(x') >0$ for each infinite place ${\mathfrak p}' \in \Pl_{\infty}$, 
complexified in $K /k$.

\smallskip
Then, by the product formula, we have
$\big(\frac{x'\,,\,{K}/k}{{\mathfrak p}} \big) = \prod_{{\mathfrak p}'\in \Pl,{\mathfrak p}'\ne {\mathfrak p}}\big
(\frac{x'\,,\,{K}/k}{{\mathfrak p}'} \big)^{-1}$, and since
$\big( \frac {x\,,\,{K}/k}{{\mathfrak p}} \big) = \big(\frac {x'\,,\,{K}/k}{{\mathfrak p}} \big)$
by (i) and the definition of the local ${\mathfrak p}$-conductor of $K/k$, we have
$\big(\frac{x\,,\,{K}/k}{{\mathfrak p}} \big) = \prod_{{\mathfrak p}'\in \Pl,{\mathfrak p}'\ne {\mathfrak p}}
\big (\frac{x'\,,\,{K}/k}{{\mathfrak p}'} \big)^{-1}$;
let us compute the symbols occurring in the right hand side:

\smallskip
$\quad \quad\bullet\ $ if ${\mathfrak p}' \in T \setminus \{{\mathfrak p}\}$, 
$x' \equiv 1 \pmod {{\mathfrak p'}^{m_{\mathfrak p'}}}$ (by (ii)) and we have
$\big(\frac {x'\,,\,{K}/k}{{\mathfrak p}'} \big) = 1$,

\smallskip
$\quad \quad\bullet\,$ if ${\mathfrak p}' \in \Pl_\infty$, $\big( \frac{x'\,,\,{K}/k}{{\mathfrak p}'} \big) = 1$
since either $\big( \frac {K /k}{{\mathfrak p}'} \big) = 1$ if ${\mathfrak p}'$ is complex or
non-complexified real, or ${v_{\mathfrak p}}'(x') = 0$ for ${\mathfrak p}'$ complexified real
(by (iii)),

\smallskip
$\quad \quad\bullet\ $ if ${\mathfrak p}' \in \Pl_0 \setminus T$, ${\mathfrak p}'$ is unramified 
and we know that $\big (\frac{x'\,,\,{K}/k}{{\mathfrak p}'} \big) =
\big (\frac{K /k}{{\mathfrak p}'} \big)^{v_{\mathfrak p'}(x')}$;

\smallskip
finally, we have obtained $\big (\frac{x\,,\,{K}/k}{{\mathfrak p}} \big) = 
\prod_{{\mathfrak p}' \in \Pl_0 \setminus T}\big (\frac{K /k}{{\mathfrak p}'}\big)^{-{v_{\mathfrak p'}}(x')}$.
It follows that since ${v_{\mathfrak p}}(x') = {v_{\mathfrak p}}(x)$ by (i), we can write:
$$(x') =: {\mathfrak p}^{{v_{\mathfrak p}}(x')} {\mathfrak a} = {\mathfrak p}^{{v_{\mathfrak p}}(x)} 
{\mathfrak a}, \ \ \hbox{(${\mathfrak a}$ is prime to $T$ by (ii)),} $$ 

and we have obtained (for  ${\mathfrak p} \in T$),
$\Big (\ds\frac{x\,,\,{K}/k}{{\mathfrak p}} \Big) = \Big (\frac{K /k}{\mathfrak a} \Big)^{-1}$,
where the Artin symbol $\ds\Big (\frac{K /k}{\mathfrak a} \Big)$ is by definition built
multiplicatively from the Frobenius automorphisms of the prime divisors of ${\mathfrak a}$.
Recall that if ${\mathfrak p}\notin T$, we have
$\Big(\ds \frac{x\,,\,{K}/k}{{\mathfrak p}} \Big) = \Big(\frac{K /k}{{\mathfrak p}} \Big)^{{v_{\mathfrak p}}(x)}. $

\medskip
When we find that $x$ is a global norm in $K/k$, {\it bnfisnorm(bnfinit $(P), x)$} of PARI \cite{P}
(for $k=\Q$ and $K$ given via the polynomial $P$), gives a solution $y$;
if $x= {\rm N}(y)$ and $(x)= {\rm N}({\mathfrak A})$ for an ideal ${\mathfrak A}$ of $K$, then it is 
immediate to get numerically ${\mathfrak B}$ such that $(y) \cdot {\mathfrak B}^{1-\sigma} = {\mathfrak A}$.
This was used for the Example \ref{82}.}
\end{remark}

One can find numerical computations, densities results, notions of ``governing fields''
and heuristic principles in many papers like \cite{Gr3$'$}, \cite{Mo1}, \cite{Mo2}, \cite{St2},
\cite{Wi}, \cite{Y2}, \cite{Ge4}, etc. We think that the local framwork
given by the algorithm may confirm these heuristic results since normic symbols are independant
(up to the product formula) and take uniformly all values with standard probabilities.

\subsection{$p$-triviality criterion for $p$-class groups in a $p$-extension}

When $K/k$ is cyclic of $p$-power degree, the triviality of $\Cl_K^{+}$, equivalent to
$\Cl_K^{+\, G}=1$, is easily characterized from the Chevalley's formula \eqref{eq28} and gives:
$$\frac{\# \Cl_k^{+}\cdot \prod_{{\mathfrak p} \in \Pl_{k,0}} e_{\mathfrak p}}
{[K : k]\, (E_k^+  : E_k^+ \cap {\rm N}(K^\times))} = \# {\rm N}(\Cl_K^{+}) \!\cdot\!
\frac{\prod_{{\mathfrak p} \in \Pl_{k,0}} e_{\mathfrak p}}
{[K : K \cap H_k^{+}] \, (E_k^+  : E_k^+ \cap {\rm N}(K^\times))} = 1, $$
which leads to the two conditions $H_k ^{+} 
\subseteq K \  \& \  (E_k^+  : E_k^+ \cap {\rm N}(K^\times))=\# \Omega_{K/k}$,
which is coherent with the fact that the genera field $H_{K/k}^+$ is $K$
(see \eqref{eq29} and \eqref{eq30}, \S\,\ref{genera}). 
Any generalization ($S$-class groups with modulus, quotients by a sub-module ${\mathcal H}$) 
is left to the reader.

\medskip
The following result gives, when the $p$-group $G$ is not cyclic, a charcterisation of the condition 
$\Cl_K^S = 1$ despite the fact that the usual Chevalley's formula does not exist in the non-cyclic case; 
so this involves more deep invariants as the knot group $\hbox{\Large$\kappa$}$ and
the $p$-central class field $C_{K/k}^S$ 
(i.e., the largest subextension of $H_K^S/K$, Galois over $k$, such that
${\rm Gal}(C_{K/k}^S / K)$ is contained in the center of ${\rm Gal}(C_{K/k}^S / k)$).

\begin{theorem}\label{central}
Let $K/k$ be a $p$-extension with Galois group $G$ (not necessarily Abelian), let $S$ be 
a finite set of non-complex places of $k$ and let $\Cl_K^S$ be the $p$-Sylow subgroup 
of the $S$-class group of $K$.
Then $\Cl_K^S = 1$ if and only if the following three conditions are satisfied, where 
$J_K$ is the id\`ele group of $K$:

\smallskip
(i) $H_k^S \subseteq K$,

(ii) $(E_k^S: E_k^S \cap {\rm N}_{K/k}(J_K)) = 
\ds \frac {\prod_{{\mathfrak p} \notin S}\  e_{\mathfrak p}^{\rm ab}\, \times \  
\prod_{{\mathfrak p}\in S} \ e_{\mathfrak p}^{\rm ab} \, f_{\mathfrak p}^{\rm ab} }{[K^{\rm ab}: H_k^S]}$, 
where $e_{\mathfrak p}^{\rm ab}$ (resp. $f_{\mathfrak p}^{\rm ab}$) is
the ramification index (resp. the residue degree) of the place ${\mathfrak p}$ of $k$ in the maximal
subextension $K^{\rm ab}$ of $K$, Abelian over $k$,

\smallskip
(iii) $\hbox{\Large$\# \kappa$} = (E_k^S\, \cap\, {\rm N}_{K/k}(J_K): 
E_k^S \cap {\rm N}_{K/k}(K^\times))$, where the knot group $\hbox{\Large$\kappa$}$ is by definition
$k^\times  \cap\, {\rm N}_{K/k}(J_K) \big/ {\rm N}_{K/k}(K^\times)$.
\end{theorem}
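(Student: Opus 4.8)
The plan is to detect the triviality of the finite $p$-group $\Cl_K^S$ through its $G$-coinvariants, to identify those coinvariants with the Galois group of the $p$-central class field $C_{K/k}^S$ over $K$, and then to compute the order of that Galois group by id\`elic class field theory; the three displayed conditions will turn out to be exactly the vanishing of the three factors occurring in that order. For the reduction, note that since $G$ is a $p$-group and $\Cl_K^S$ is a finite $p$-group, Nakayama's lemma over the local ring $\Z_p[G]$ (whose radical contains the augmentation ideal $I_G$) gives $\Cl_K^S = 1$ if and only if $(\Cl_K^S)_G := \Cl_K^S/(\Cl_K^S)^{I_G} = 1$. Writing $N := {\rm Gal}(H_K^S/K) \simeq \Cl_K^S$, normal in $\Gamma := {\rm Gal}(H_K^S/k)$ with $\Gamma/N \simeq G$, and using that $N$ is abelian so that $[N,\Gamma] = N^{I_G}$, the subfield of $H_K^S$ fixed by $[N,\Gamma]$ is precisely $C_{K/k}^S$ and ${\rm Gal}(C_{K/k}^S/K) \simeq (\Cl_K^S)_G$. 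Thus the theorem reduces to the single identity $[C_{K/k}^S : K] = 1$, i.e.\ to a \emph{central genus number formula} for $\#(\Cl_K^S)_G$.

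To obtain that formula I would describe $\Cl_K^S$ as a quotient $J_K/K^\times\,\mathcal U_K^S$ of the id\`ele group (with $\mathcal U_K^S$ the appropriate unit-and-split subgroup, the narrow sense being encoded at the real places outside $S$), pass to coinvariants, and apply the arithmetic norm ${\rm N}_{K/k}$, which annihilates $J_K^{I_G}$. Because norms and the reciprocity map see only the maximal abelian-over-$k$ quotient, everything is governed by $K^{\rm ab}$; this is exactly why the local ramification indices $e_{\mathfrak p}^{\rm ab}$, the residue degrees $f_{\mathfrak p}^{\rm ab}$ at the split places of $S$, the global norm index $(J_k : k^\times {\rm N}_{K/k}(J_K)) = [K^{\rm ab}:k]$, and the already-abelian part $[K^{\rm ab}:H_k^S]$ all carry the superscript ${\rm ab}$. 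Tracking the image and kernel of the induced map $(\Cl_K^S)_G \to \Cl_k^S$ then splits $\#(\Cl_K^S)_G$ into three factors: the image is ${\rm N}_{K/k}(\Cl_K^S)$, and $\Cl_K^S=1$ forces $K\,H_k^S = H_K^S = K$, that is condition (i), while local class field theory assembles the factors $e_{\mathfrak p}^{\rm ab}$ and $e_{\mathfrak p}^{\rm ab}f_{\mathfrak p}^{\rm ab}$ against $(E_k^S : E_k^S\cap {\rm N}_{K/k}(J_K))$ and $[K^{\rm ab}:H_k^S]$ into the id\`elic genus-number-one condition (ii).

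The third factor is the genuinely non-cyclic ingredient: the failure of the Hasse norm theorem, namely that $k^\times\cap {\rm N}_{K/k}(J_K) \ne {\rm N}_{K/k}(K^\times)$ in general, the discrepancy being exactly the knot group $\kappa$. Passing on the unit group $E_k^S$ from ``$x$ is an id\`elic norm'' to ``$x$ is the norm of an \emph{element}'' introduces the index $(E_k^S\cap {\rm N}_{K/k}(J_K) : E_k^S\cap {\rm N}_{K/k}(K^\times))$, and the residual factor in $\#(\Cl_K^S)_G$ is $\#\kappa$ divided by this index; its vanishing is condition (iii). Combining the three factors yields $[C_{K/k}^S:K] = 1$ if and only if (i), (ii) and (iii) hold simultaneously.

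The main obstacle is establishing the central genus number formula itself, and in particular isolating the knot-group contribution cleanly. In the cyclic case $\kappa = 1$ by the Hasse norm theorem, conditions (ii) and (iii) collapse into the single unit-norm condition of the ordinary Chevalley formula \eqref{eq28}, and no ${\rm ab}$-superscripts are needed; the non-cyclic novelty is precisely that id\`elic norms and element norms diverge, and that only $K^{\rm ab}$ is visible to reciprocity, so one must thread this divergence through the coinvariants computation without double counting the local degrees against the knot. Once the factorization of $\#(\Cl_K^S)_G$ into the three indices governed by (i)--(iii) is in hand, the stated equivalence is immediate.
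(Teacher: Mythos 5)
Your proposal is correct and takes essentially the same route as the paper: reduce triviality to that of the $G$-coinvariants via Nakayama for the $p$-group $G$ (the paper phrases this as a power of $I_G$ lying in $p\,\Z[G]$ plus the fixed point theorem), identify $\#(\Cl_K^S)_{G}$ with $[C_{K/k}^S : K]$ (the paper passes through the duality $\#{\rm H}_0(G,\Cl_K^S)=\#{\rm H}^0(G,\Cl_K^{S\,\ast})$, you use $[N,\Gamma]=N^{I_G}$ for $N$ abelian, which is equivalent), and then read conditions (i)--(iii) off the factorization of $[C_{K/k}^S : K]$. The central class number formula that you flag as the ``main obstacle'' is exactly the ingredient the paper also does not prove but cites from \cite[Theorem IV.4.7]{Gr2}, so your sketch matches the paper's level of completeness as well as its structure.
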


The knot group, which may be nontrivial in the non-cyclic case, measures the ``defect''
of the Hasse principle, i.e., of local norms compared to global norms.
The proof is based on the fact that $\Cl_K^S = 1$ if and only if $\Cl_K^S =  I_G\cdot \Cl_K^S$,
where $I_G$ is the augmentation ideal of $G$, because when $G$ is a $p$-group there 
exists a power of $I_G$ which is contained in $p\,\Z[G]$.
Since by duality, ${\rm H}_0(G, \Cl_K^S)$ and ${\rm H}^0(G, \Cl_K^{S\, \ast})$ have 
same order, we obtain the relation $(\Cl_K^S : I_G\cdot \Cl_K^S) = \# (\Cl_K^{S\,\ast})^G$,
which means that $[C_{K/k}^S : K] = \#  (\Cl_K^{S\,\ast})^G$; 
thus we recover the condition by using the classical fixed point theorem for finite $p$-groups.
From the formula giving $[C_{K/k}^S : K]$ (cf. \cite[Theorem IV.4.7]{Gr2}), 
we deduce the three conditions of the theorem.

\smallskip
For a detailed proof, see \cite[\S\,IV.4.7.4]{Gr2} giving a historic of the genera and central classes theories
from works of Scholz, Fr\"ohlich, Furuta, Gold, Garbanati, Jehne, Miyake, Razar, Shirai, and many others; 
see \cite{L4} for an history of genus theory and related results.

\begin{remark} Condition (iii) is empty when $G$ is cyclic (Hasse principle), or when $\hbox{\Large$\kappa$} = 1$.
The condition $\hbox{\Large$\kappa$} = 1$ can be checked in the Abelian case via Razar's criterion, see \cite{Ra}; 
on the contrary it becomes nontrivial in the other cases so that, in practice, there does not exist any easy numerical 
criterion for the triviality of the $p$-class group in a non-cyclic $p$-extension.

\smallskip
In the particular case $k = \Q$, $S = \emptyset$, condition (i) is empty, condition (ii), equivalent to 
$\prd_{{\mathfrak p} \in \Pl_0}e_{\mathfrak p}^{\rm ab} = [K^{\rm ab} : \Q]$,
is easy to check, and condition (iii) is equivalent to $\hbox{\Large$\kappa$} = 1$;
this implies that for $k = \Q$ with the narrow sense, the above problem
is essentially reduced to that of the Hasse principle.
\end{remark}

\section{Relative $p$-class group of an Abelian field of prime to $p$ degree}\label{sect5}
We fix a prime number $p$. To simplfy, we suppose $p>2$.

\smallskip
We shall apply the above results of Sections $3$ and $4$ to study the Galois structure of the relative 
$p$-class group of an imaginary Abelian extension $k / \Q$, {\it of prime to $p$ degree}, using both 
the genera theory with characters in a suitable extension $K/k$, cyclic of degree $p$,
and the ``principal theorem'' of Thaine--Ribet--Mazur--Wiles--Kolyvagin in $k$ \cite{MW}.

\smallskip
This section, based on \cite[(1993)]{Gr11}, emphasizes an interesting 
phenomena which is, roughly speaking, that when one grows up in suitable $p$-extensions $K/k$, 
the $p$-class group of $K$ becomes ``more regular'' and gives informations on the $p$-class group 
of the base field $k$; the most spectacular case being Iwasawa theory in $\Z_p$-extensions \cite{Iw}
giving for instance (under the nullity of the $\mu$-invariant) Kida's formula for the $\lambda^-$-invariants 
in finite $p$-extensions $K/k$ of CM-fields, which is nothing else than a ``genera theory'' comparison 
of $p$-ranks of relative class groups ``at infinity'', i.e., in $K_\infty/k_\infty$ where $k_\infty$ and 
$K_\infty$ are the cyclotomic $\Z_p$-extensions of $k$ and $K$, respectively (see various approaches 
in  \cite{Iw}, \cite{Ki}, \cite{Sin}). For instance, when $K/k$ is cyclic of degree $p$ 
one gets for the whole $\lambda$-invariants, assuming $K \cap k_\infty = k$ (\cite[Theorem 6 (1981)]{Iw}):
$$\lambda(K) -1 = p  \cdot  (\lambda(k)-1) + (p-1)  \cdot  
 (\chi(G, E_{K_\infty}) +1) + \sm_{w}\big (e_w (K_\infty/k_\infty) - 1\big), $$
 
where $w$ ranges over all non-$p$-places of $K_\infty$, where $p^{\chi(G, E_{K_\infty})}$ is the 
Herbrand quo\-tient $\frac{{\rm H}^2(G, E_{K_\infty})}{{\rm H}^1(G, E_{K_\infty})}$ of the group
$E_{K_\infty}$ of units of $K_\infty$ (similar situation as for Chevalley's
formula which needs the knowledge of the Herbrand quotient of $E_K$)
and where $e_w (K_\infty/k_\infty)$ is the ramification index of $w$ in $K_\infty/k_\infty$.

\smallskip
This aspect, in $p$-extensions different from $\Z_p$-extensions, is probably not sufficiently thorough.

\subsection{Abelian extensions of $\Q$ and characters}
Now we fix a prime number $p>2$. 
Let $\Q^{\rm ab}$, seen in $\C_p$ (the completion of an algebraic closure of $\Q_p$), 
be the maximal Abelian extension of $\Q$ (as we know, it is the compositum of all 
cyclotomic extensions of $\Q$), and let $G^{\rm ab} := {\rm Gal}(\Q^{\rm ab}/\Q)$. 

\smallskip
Let $\Psi$ be the group of $\C_p$-irreducible characters $\psi : G^{\rm ab} \too \C_p^\times$
of finite order, and let ${\mathcal X}$ be the set of $\Q_p$-irreducible characters
$\chi$ (such a character $\chi$ is the sum of the $\Q_p$-conjugates $\psi_i$ of a 
character $\psi \in \Psi$; then we say that these conjugates $\psi_i$ divide $\chi$, 
denoted $\psi_i \mid \chi$).

\smallskip
We denote by $k_\chi$ (cyclic over $\Q$) the subfield of $\Q^{\rm ab}$ fixed  by the kernel 
${\rm Ker}(\chi)$ of $\psi$ and by $R_\chi$ the ring of values of $\psi$ over $\Z_p$ 
($k_ \chi$, ${\rm Ker}(\chi)$, $R_\chi$ do not depend on the choice of the conjugate of $\psi$, 
whence the notation); furthermore, these objects only depend on the {\it $\Q$-irreducible character} 
$\rho$ above $\psi$ or $\chi$ ($\rho$ is the sum of all $\Q$-conjugates of $\psi$ then a sum 
of some $\chi$). The degree of $k_\chi/\Q$ is equal to the order of $\psi \mid \chi$.

\smallskip
The ring $R_\chi$ is a cyclotomic local ring 
whose maximal ideal is denoted ${\mathfrak M}_\chi$; more precisely, if $\psi \mid \chi$ is of order
$d\,p^n$, $p \nmid d$, $n \geq 0$, then $R_\chi = \Z_p[\xi_{d\,p^n}] = \Z_p[\xi_d] [\xi_{p^n}]$, 
where $\xi_d$ and $\xi_{p^n}$ are primitive $d$th and $p^n$th roots of unity, respectively; the prime $p$
is unramified in $\Q_p(\xi_d)/\Q_p$ and totally ramified in $\Q_p(\xi_{p^n})/\Q_p$ of degree $(p-1)\,p^{n-1}$, 
so that we get
$${\mathfrak M}_\chi^{(p-1) p^{n-1}} \! = p\cdot R_\chi . $$

Let 

\medskip
\centerline {${\mathcal X}_0 := \{\chi \in {\mathcal X}, \ \ \hbox{$\psi \mid \chi$ is of order prime to $p$} \}$}

and 

\centerline{${\mathcal X}_p := \{\chi \in {\mathcal X}, \ \ \hbox{$\psi \mid \chi$ is of $p$-power order} \}.$}

\medskip
We verify that ${\mathcal X} = {\mathcal X}_0 \cdot {\mathcal X}_p$ since
for any $\chi \in {\mathcal X}$ and $\psi \mid \chi$, we have the unique factorization 
$\psi = \psi_0 \cdot \psi_p$ where $\psi_0$
is of order prime to $p$ and $\psi_p$ is of $p$-power order, then $\chi = \chi_0^{} \cdot \chi_p$,
where $\psi_0 \mid \chi_0^{}$ and $\psi_p \mid \chi_p$, since $\Q_p(\xi_d)/\Q_p$
and $\Q_p(\xi_{p^n})/\Q_p$ are linearly disjoint over $\Q_p$. Note that $\chi_p$ is also
the $\Q$-irreducible character deduced from $\psi_p$ since $\Q$-conjugates and $\Q_p$-conjugates 
of $\psi_p$ coincide.
The local degree $f_{\chi_0^{}} := [\Q_p(\xi_d): \Q_p]$ is the residue degree of $p$ in $\Q(\xi_d)/ \Q$. 

\smallskip
We say that $\chi$ is even (resp. odd) if $\psi(s_{-1}) =1$ (resp. $\psi(s_{-1}) =-1$), where 
$s_{-1}$ is the complex conjugation. We denote  by
${\mathcal X}^\pm$, ${\mathcal X}_0^\pm$ and ${\mathcal X}_p^\pm$ the corresponding
sets of even or odd characters (note that since $p\ne 2$, ${\mathcal X}_p = {\mathcal X}_p^+$). 

\smallskip
For any subfield $K$ of $\Q^{\rm ab}$ we denote by ${\mathcal X}_K$ 
(then ${\mathcal X}_{K,0}$, ${\mathcal X}_{K,p}$) the set of characters of $K$
(i.e., such that ${\rm Gal}(\Q^{\rm ab}/K) \subseteq {\rm Ker}(\chi)$ or $k_\chi \subseteq K$).

\subsection{The universal $\chi$-class groups ($\chi \in {\mathcal X}$, $p>2$)}

Let $\Cl_F$ denotes the $p$-class group of any field $F \subset \Q^{\rm ab}$ (since $p>2$,
we have implicitely the ordinary sense). Let $\chi \in {\mathcal X}$.

\smallskip
(i) If $\chi = \chi_0^{} \in {\mathcal X}_0$, let  $e_{\chi_0} = 
\frac{1}{[k_{\chi_0} : \Q]} \sm_{s \in {\rm Gal}(k_{\chi_0} /\Q)} {\chi_0}(s^{-1})\,s$
be the idempotent of $\Z_p [{\rm Gal}(k_{\chi_0} /\Q)]$ associated with $\chi = \chi_0^{}$; so
we have $\Z_p [{\rm Gal}(k_{\chi_0} /\Q)] \cdot e_{\chi_0} \simeq R_{\chi_0^{}}$.

\smallskip
Then we define the $\chi_0^{}$-class group as the corresponding semi-simple 
component of $\Cl_{k_{\chi_0}}$ defined by
$$\Cl_{\chi_0} := \Cl_{k_{\chi_0}}^{e_{\chi_0}}. $$ 

(ii) If $\chi = \chi_0^{} \cdot \chi_p$ with $\chi_0^{} \in {\mathcal X}_0$ and $\chi_p \in {\mathcal X}_p$,
$\chi_p \ne 1$, let $k'$ be the unique subfield of $k_\chi$ 
such that $[k_\chi : k']=p$ (we have $k' = k_{\chi_0^{}}$ only if $\chi_p$ is of order $p$); thus the 
arithmetical norm ${\rm N}_{k_\chi / k'}$ induces the following exact sequence of 
$R_{\chi_0^{}}$-modules defining $\Cl_\chi$:
$$1 \too \Cl_\chi \tooo \Cl_{k_\chi}^{e_{\chi_0}}  \mathop{\tooo}^{{\rm N}_{k_\chi \! /\! k'}}
\Cl_{k'}^{e_{\chi_0}} \too 1, $$

the surjectivity being obvious because $k_\chi$ is the direct compositum over $\Q$ of $k_{\chi_0^{}}$
and $k_{\chi_p}$ which is a cyclic $p$-extension of $\Q$, thus totally ramified
at least for a prime number, whence $k_\chi / k'$ ramified. Since $\Cl_\chi$ is anihilated by
$e_{\chi_0}^{}$ and by ${\rm N}_{k_\chi  / k'}$ which corresponds to $1+ \sigma+ \cdots + \sigma^{p-1}$
in the group algebra of ${\rm Gal}(k_{\chi}/ k') =: \langle \sigma \rangle$,  $\Cl_\chi$ is
canonically a $R_\chi$-module (and not only a $R_{\chi_0^{}}$-module).

\smallskip
This defines, by an obvious induction in $ k_{\chi} / k_{\chi_0^{}}$, the universal family of 
components $\Cl_\chi$ for all $\chi \in {\mathcal X}$ for which we have the following 
formulas for any cyclic extension $K/\Q$ of degree $d\cdot p^n$, $p \nmid d$, $n\geq 0$:
\begin{equation}\label{eq37}
\begin{aligned}
\# \Cl_K  &= \prd_{\chi_0^{} \in  {\mathcal X}_{K, 0}} \# \Cl_K^{e_{\chi_0^{}}} , \\
\# \Cl_K^{e_{\chi_0^{}}} &= \prd_{i=0}^n \# \Cl_{\chi_i} ,\ \ \forall \chi_0^{}  \in {\mathcal X}_{K, 0},
\end{aligned}
\end{equation}

where, for each $\chi_0^{} \in {\mathcal X}_{K, 0}$, $\chi_i = \chi_0^{} \cdot \chi_{p, i}$, 
where $\chi_{p, i}$ is above $\psi_p^{p^{n-i}}$ for $\psi_p \mid \chi_p^{}$.

\smallskip
We denote by $\omega$, of order $p-1$, the Teichm\"uller character for $p>2$; we have 
$k_\omega = \Q(\zeta_p)$ where $\zeta_p$ is a primitive $p$th root of unity and by definition 
$\omega(\zeta_p\to \zeta_p^a) \equiv a \pmod p$ for $a=1,\ldots,p-1$. 

\medskip
With these definitions, we can give the statement of the ``principal theorem'' of 
Thaine--Ribet--Mazur--Wiles--Kolyvagin \cite{MW} in the particular context of  imaginary fields $K$
for  the relative class groups $\Cl_K^-$, hence with odd characters.

\begin{theorem} \label{MWK}
Let $p\ne 2$ and let $\chi = \chi_0^{} \cdot \chi_p \in {\mathcal X}^-$. We assume that $\chi_0^{} \ne \omega$
when $k_\chi$ is the cyclotomic field $\Q(\zeta_{p^n})$ (otherwise $\Cl_{\chi}=1$).
For $\psi \mid \chi$, let $b_\chi$ be the ideal $B_1(\psi^{-1})\cdot R_\chi$ 
where $B_1(\psi^{-1})$ is the generalized Bernoulli number of the character~$\psi$. Then we have
$\# \Cl_{\chi} = \# (R_\chi / b_\chi)$.
\end{theorem}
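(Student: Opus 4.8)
The plan is to read Theorem \ref{MWK} not as a statement to be proved from scratch but as the eigenspace-by-eigenspace, order-level refinement of the Iwasawa Main Conjecture for odd characters; accordingly the proof consists in deducing the index formula from the ``principal theorem'' of \cite{MW}, supplying only the reduction and the bookkeeping. First I would use the multiplicative formulas \eqref{eq37} together with the defining exact sequence $1 \too \Cl_\chi \tooo \Cl_{k_\chi}^{e_{\chi_0}} \mathop{\tooo}^{{\rm N}_{k_\chi/k'}} \Cl_{k'}^{e_{\chi_0}} \too 1$ to reduce the claim to an inductive statement along the $p$-tower $k_{\chi_0} \subset \cdots \subset k_\chi$: it suffices to prove, level by level, that $\# \Cl_{\chi_i} = \#(R_{\chi_i}/b_{\chi_i})$ for each $\chi_i = \chi_0^{} \cdot \chi_{p,i}$, since the arithmetic orders then telescope to $\# \Cl_{k_\chi}^{e_{\chi_0}} = \prd_i \# \Cl_{\chi_i}$ while the analytic indices multiply the same way.

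For the base case $\chi = \chi_0^{} \in {\mathcal X}_0^-$ of order prime to $p$, the ring $R_{\chi_0}$ is unramified over $\Z_p$ and $\Cl_{\chi_0} = \Cl_{k_{\chi_0}}^{e_{\chi_0}}$ is the semi-simple eigencomponent cut out by $e_{\chi_0}$; the generalized Bernoulli number enters through $L(0, \psi^{-1}) = - B_1(\psi^{-1})$. The equality $\# \Cl_{\chi_0} = \#(R_{\chi_0}/ B_1(\psi^{-1}) R_{\chi_0})$ is the $n = 0$ case of the Main Conjecture input used below, and is \emph{not} formal; what is classically available is only the \emph{product} statement, namely the $p$-part of the analytic class number formula, $\prd_{\chi \in {\mathcal X}^-} \# \Cl_\chi = \prd_{\chi \in {\mathcal X}^-} \#(R_\chi / b_\chi)$ up to $p$-adic units, which will serve to convert one-sided divisibilities into equalities.

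To climb the $p$-tower I would invoke the Main Conjecture in characteristic-ideal form: the $\chi_0^{}$-isotypic part $X_{\chi_0}$ of the projective limit of the $p$-class groups up the cyclotomic $\Z_p$-extension has characteristic ideal generated by the corresponding branch of the $p$-adic $L$-function (this is the content of \cite{MW}). Specializing this identity at the finite layer corresponding to $\chi_p$ of order $p^n$, and using the interpolation property that recovers the numbers $B_1(\psi^{-1})$ attached to the $\chi_i$, matches $\#(R_\chi/b_\chi)$ with the order of the $\chi$-component; the passage from characteristic ideals to genuine orders is legitimate because $X_{\chi_0}$ carries no nonzero finite $\Z_p[[T]]$-submodule (the $\mu$-invariant vanishes by Ferrero--Washington, and the relevant eigenspaces are cyclic over $R_\chi$). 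The excluded configuration $\chi_0^{} = \omega$ with $k_\chi = \Q(\zeta_{p^n})$ is the trivial-zero case: there $B_1(\omega^{-1})$ is a $p$-adic unit and the $\omega$-eigenspace of the class group is trivial, which is exactly why the statement reads $\Cl_\chi = 1$ and is set aside.

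The main obstacle is of course the deep divisibility hidden in \cite{MW}: obtaining an equality of orders rather than a one-sided bound. One inequality --- the production of enough unramified classes from the Eisenstein ideal on modular curves, giving $\#(R_\chi/b_\chi) \leq \# \Cl_\chi$ (equivalently, the Euler-system bound in the opposite direction of Thaine and Kolyvagin) --- is the genuinely hard geometric input. The reverse inequality is then forced formally: multiplying over all odd $\chi$ and comparing with the analytic class number formula makes the two products equal, so each individual inequality must in fact be an equality. In a survey this deep step is quoted as the ``principal theorem'', and only the reduction and the class-number-formula comparison above are carried out explicitly.
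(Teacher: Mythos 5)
Your proposal matches the paper's own treatment of this statement: the paper gives \emph{no} proof of Theorem \ref{MWK}, but quotes it as the ``principal theorem'' of \cite{MW}, and your outline --- eigenspace bookkeeping via \eqref{eq37}, the deep Mazur--Wiles input, descent from the characteristic ideal, and the analytic class number formula to promote a one-sided divisibility (valid for every odd $\chi$) to an equality --- is exactly the argument that this citation encapsulates. So in substance you and the paper take the same route: defer the geometric core to \cite{MW} and carry out only the formal reduction around it.

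Two of your parenthetical claims are false, however, and should be repaired even though neither is load-bearing. First, ``the relevant eigenspaces are cyclic over $R_\chi$'' is not true in general, and is contradicted by the paper itself: the paragraph immediately following Theorem \ref{MWK} stresses that the theorem gives no structural information, so that $\Cl_\chi \simeq \plus_{i=1}^e R_\chi/{\mathfrak M}_\chi^{t_i}$ with $e \geq 2$ can occur, and Berthier's tables \cite{Ber} cited at the end of Section \ref{sect5} exhibit precisely such non-monogenic components. What the descent from the characteristic ideal actually requires is Iwasawa's theorem that the minus part of the unramified Iwasawa module has no nonzero finite $\Z_p[[T]]$-submodule; that fact is unconditional and is not a consequence of Ferrero--Washington (the vanishing of $\mu$ plays a separate role), so the justification of this step needs to be restated. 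Second, in the excluded case $\chi_0^{}=\omega$ with $k_\chi=\Q(\zeta_{p^n})$, the number $B_1(\omega^{-1})$ is \emph{not} a $p$-adic unit: from $a \equiv \omega(a) \pmod{p}$ one gets $p\,B_1(\omega^{-1}) = \sum_{a=1}^{p-1}\omega^{-1}(a)\,a \equiv -1 \pmod{p}$, so $B_1(\omega^{-1})$ has $p$-adic valuation $-1$; this pole is exactly why the index $\#(R_\chi/b_\chi)$ makes no sense there and the component must be excluded, while the triviality $\Cl_\omega = 1$ follows from the Stickelberger ideal, not from a unit Bernoulli number. Finally, a small point of wording: your ``equivalently'' linking the Eisenstein-ideal bound $\#(R_\chi/b_\chi) \leq \#\Cl_\chi$ to the Thaine--Kolyvagin bound should be ``alternatively'' --- they are opposite divisibilities, either one of which suffices once the product formula over all odd characters is invoked.
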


But as it is well known, this result does not give the structure of $\Cl_{\chi}$ as $R_\chi$-module;
indeed, if $b_\chi = {\mathfrak M}_\chi^t$, we may have the general structure:
$$\hbox{$\Cl_{\chi} \simeq \plus_{i=1}^e R_\chi / {\mathfrak M}_\chi^{t_i}$, $\ \  1 \leq t_1 \leq \cdots \leq t_e$, 
$\ \ e \geq 0$, $\ \ \sm_{i=1}^e t_i = t$.} $$

For instance, $\Cl_\chi$ is $R_\chi$-monogenic if and only if $e=1$.

\subsection{Definition of admissible sets of prime numbers}
Still for $p\ne 2$ and  $\chi_0^{} \in {\mathcal X}_0^-$, $\chi_0^{} \ne \omega$, consider the cyclic field
$k := k_{\chi_0^{}}$ for which $\Cl_{\chi_0^{}} = \Cl_k^{e_{\chi_0^{}}}$, where 
$e_{\chi_0^{}} = \frac{1}{[k_{\chi_0}  : \Q]} \sm_{s \in {\rm Gal}(k_{\chi_0} /\Q)} \chi_0^{} (s^{-1})\,s$. 
We intend to apply the previous sections of this paper on genera theory to obtain informations 
on the structure of $\Cl_{\chi_0^{}}$.

\begin{definitions} (i) For any $t \geq 1$, let ${\mathcal S}_t$ be the familly of sets
$\{\ell_1, \ldots, \ell_t\}$ of $t$ prime numbers fulfilling the following conditions 
(for given $\chi_0^{} \in {\mathcal X}_0^-$ and $\psi_0 \mid \chi_0^{}$): 

\medskip
\hspace{0.5cm} $\ell_i \equiv 1 \pmod p$, for $i=1, \ldots , t$ (i.e., $p \mid [\Q(\zeta_{\ell_i}) : \Q]$);

\medskip
\hspace{0.5cm} $\psi_0(\ell_i)=1$, for $i=1, \ldots , t$ (i.e., $\ell_i$ totally splits in $k = k_{\chi_0^{}}$). 

\medskip
(ii) For $S \in {\mathcal S}_t$, let $\Phi_S \subset {\mathcal X}_p$ be the set
of characters  $\varphi$, of order $p$, with conductor $\ell_1 \cdots \ell_t$ (that is to say,
$k_ \varphi \subseteq \Q(\zeta_{\ell_1\cdot \cdots \cdot \ell_t}^{})$ is of conductor 
$\ell_1 \cdots \ell_t$, whence if $k_i$ is the unique subfield of $\Q(\zeta_{\ell_i})$ of degree $p$, then
$k_ \varphi$ is a subfield of degree $p$ of the compositum $k_1 \cdots k_t$ and $k_ \varphi$ is not in 
a compositum of less than $t$ fields $k_i$).

\medskip
(iii) The character $\varphi \in \Phi_S$ is said to be $\chi_0^{}$-{\it admissible} if 
$b_{\chi_0^{} \cdot \varphi} = {\mathfrak M}_{\chi_0^{} \cdot \varphi}^t$ 
(see Theorem \ref{MWK} for the definition of $b_{\chi_0^{} \cdot \varphi}$). 
By extension we say that $S\in {\mathcal S}_t$ is $\chi_0^{}$-admissible if there 
exists at least a $\chi_0^{}$-admissible character $\varphi \in \Phi_S$. 

\medskip
(iv) Let $r_{\chi_0^{}}$ be the $R_{\chi_0^{}} / p\,R_{\chi_0^{}}$-dimension of $\Cl_{\chi_0^{}}/\Cl_{\chi_0^{}}^p$.
\end{definitions}

So the number $t$ is known from the computation of a Bernoulli number depending on $\varphi$ 
and it is not difficult to find $\chi_0^{}$-admissible characters $\varphi$.
Then we have proved in \cite{Gr11} the following effective result:

\begin{theorem} \label{}
Let $p\ne 2$ and let $\chi_0^{} \in {\mathcal X}_0^-$, $\chi_0^{} \ne \omega$, and let $k=k_{\chi_0^{}}$.

\smallskip
Let $S = \{\ell_1, \ldots, \ell_t\} \in {\mathcal S}_t$ be a $\chi_0^{}$-{\it admissible} set; 
then for $i=1, \ldots, t$, let ${\mathfrak l}_i$ be a prime ideal of $k$ above $\ell_i$ and let 
$h_i := \cl_k({\mathfrak l}_i)^{e_{\chi_0^{}}}$ be the image of $\cl_k({\mathfrak l}_i)$ 
in $\Cl_k^{e_{\chi_0^{}}}$.

\smallskip
Then $\Cl_{\chi_0^{}}$ is the $R_{\chi_0^{}}$-module generated by the $h_i$, $i= 1, \ldots, t$, 
and we have $r_{\chi_0^{}} \leq t$.
Taking the minimal value of $t$ yields $r_{\chi_0^{}}$.
\end{theorem}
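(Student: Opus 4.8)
The plan is to fix a $\chi_0$-admissible character $\varphi \in \Phi_S$, set $K := k_{\chi_0\varphi}$ --- a cyclic extension of degree $p$ over $k := k_{\chi_0}$, ramified exactly at the primes of $k$ above $\ell_1, \ldots, \ell_t$ --- and to work throughout inside the $e_{\chi_0}$-eigenspace. Writing $\chi := \chi_0\varphi$, the relative component $\Cl_\chi = {\rm Ker}\big({\rm N}_{K/k} : \Cl_K^{e_{\chi_0}} \to \Cl_{\chi_0}\big)$ is a module over the discrete valuation ring $R_\chi = R_{\chi_0}[\xi_p]$, on which $1-\sigma$ acts as the uniformizer $1-\xi_p$ generating ${\mathfrak M}_\chi$ (so ${\mathfrak M}_\chi^{p-1} = p\,R_\chi$) and on which $\nu_{K/k} = 1+\sigma+\cdots+\sigma^{p-1}$ acts as $0$. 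Thus $\Cl_\chi^{\nu_{K/k}} = 1$, and the $\Z_p[\zeta]$-module decomposition of \S\ref{ssect42} applies over the unramified base $R_{\chi_0}$: $\Cl_\chi \simeq \plus_{i=1}^{e} R_\chi/{\mathfrak M}_\chi^{t_i}$ with $\sum_i t_i = t$, the number $e$ of cyclic summands being $\dim_{R_{\chi_0}/{\mathfrak M}_{\chi_0}} \Cl_\chi^G$.

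First I would record the two external inputs. By Theorem \ref{MWK} together with admissibility ($b_\chi = {\mathfrak M}_\chi^t$) one gets $\# \Cl_\chi = \#(R_\chi/{\mathfrak M}_\chi^t) = q^t$, where $q := \#(R_\chi/{\mathfrak M}_\chi)$; this is the arithmetic half. The geometric half is the genera theory of \S\ref{genera} and the filtration analysis of Section \ref{sect4}: since each $\ell_i$ splits completely in $k$ and is totally ramified in $K/k$, each prime ${\mathfrak P}_i \mid \ell_i$ of $K$ is $G$-invariant, so $a_i := \cl_K({\mathfrak P}_i)^{e_{\chi_0}}$ lies in $(\Cl_K^{e_{\chi_0}})^G$ and satisfies ${\rm N}_{K/k}(a_i) = h_i$. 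The computation of $M_1 = (\Cl_K^{e_{\chi_0}})^G$ recalled in Section \ref{sect4} shows that this group is generated by the extended classes $j_{K/k}(\Cl_{\chi_0})$ and the ramified-prime classes $a_i$, the only further contribution coming from units that are local norms --- a contribution which admissibility is designed to annihilate in the $e_{\chi_0}$-component.

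Next I would splice these through the $G$-cohomology of the norm sequence. Taking $G$-invariants of $1\to \Cl_\chi \to \Cl_K^{e_{\chi_0}}\to\Cl_{\chi_0}\to1$ gives $0\to\Cl_\chi^G\to(\Cl_K^{e_{\chi_0}})^G\to\Cl_{\chi_0}\to{\rm H}^1(G,\Cl_\chi)$, in which, by the previous paragraph, the image of ${\rm N}_{K/k}$ on the invariants is $\langle h_1,\ldots,h_t\rangle_{R_{\chi_0}}\cdot\Cl_{\chi_0}^{\,p}$. Since $p\in{\mathfrak M}_{\chi_0}$, Nakayama's lemma shows this subgroup equals $\Cl_{\chi_0}$ precisely when the $h_i$ generate $\Cl_{\chi_0}$ over $R_{\chi_0}$; thus the generation statement is \emph{equivalent} to the surjectivity of ${\rm N}_{K/k}$ on $G$-invariants, i.e.\ to the vanishing of the connecting map into ${\rm H}^1(G,\Cl_\chi)$. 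I would secure this surjectivity by an order count: the genera/Chevalley formula of \S\ref{genera}, read in the $e_{\chi_0}$-eigenspace, evaluates $\#(\Cl_K^{e_{\chi_0}})^G$, while $\#\Cl_\chi^G$ is controlled by the $\Z_p[\zeta]$-structure (the Herbrand quotient of $\Cl_\chi$ being $1$); feeding in $\#\Cl_\chi = q^t$ from admissibility then forces ${\rm im}({\rm N}_{K/k})$ to exhaust $\Cl_{\chi_0}$.

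The principal obstacle is exactly this order count. It demands a precise evaluation of the inertia factor $\prod_{\mathfrak p}e_{\mathfrak p}$ of Chevalley's formula after projection to the $e_{\chi_0}$-component --- where the duality of characters (reflection principle) between $\chi_0$ and the inertia characters at $\ell_1,\ldots,\ell_t$ must be invoked --- together with a proof that admissibility forces the unit-norm index $(E_k^{+}:E_k^{+}\cap{\rm N}(K^\times))$ in that component to attain its maximal value $\#\Omega(K/k)$, so that no residual genus obstruction survives. Once generation is established, $r_{\chi_0}\le t$ is immediate, since $\Cl_{\chi_0}$ then admits $t$ generators. For the final assertion, every admissible $S$ satisfies $\#S\ge r_{\chi_0}$ by the generation just proved, whereas the ready availability of admissible characters (a Bernoulli/Chebotarev genericity statement) lets one realize an admissible $S$ with exactly $r_{\chi_0}$ primes; hence the minimal admissible cardinality equals $r_{\chi_0}$.
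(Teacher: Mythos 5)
Your global architecture is the one the paper uses: the same auxiliary field $K=k_{\chi_0\varphi}$ for an admissible $\varphi$, the same module $M=\Cl_K^{e_{\chi_0}}$, Theorem \ref{MWK} plus admissibility to get $\#\Cl_\chi=q^t$ (with $q=p^{f_{\chi_0}}$), an ambiguous-class computation of $M^G$, and the norm-plus-Nakayama finish. Your third paragraph is also logically sound as stated: granting $\#M^G=\#\Cl_{\chi_0}\cdot q^t$ and $\#\Cl_\chi^G\le\#\Cl_\chi=q^t$, the $G$-invariants of $1\to\Cl_\chi\to M\to\Cl_{\chi_0}\to 1$ force both inequalities to be equalities, hence ${\rm N}_{K/k}$ is surjective on $M^G$ and Nakayama gives generation; this is a mild repackaging of the paper's cleaner step, which compares $\#M=\#\Cl_{\chi_0}\cdot\#\Cl_\chi$ with $\#M_1=\#\Cl_{\chi_0}\cdot q^t$, concludes $M=M_1$, and uses surjectivity of ${\rm N}_{K/k}$ on all of $M$ (automatic since $K/k$ is ramified).

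The genuine gap sits exactly where you place the ``principal obstacle'', because the lemma you propose to prove there is the wrong one. The unconditional input your count needs is: $M^G=j_{K/k}(\Cl_{\chi_0})\cdot\langle H_1,\ldots,H_t\rangle_{R_{\chi_0}}$ \emph{together with} $\#M^G=\#\Cl_{\chi_0}\cdot q^t$. In the paper both rest on a single fact that your proposal never invokes: $\chi_0$ is \emph{odd} and $\ne\omega$, so the $\chi_0$-component of $E_k\otimes\Z_p$ is trivial; this simultaneously gives injectivity of $j_{K/k}$ on $\Cl_{\chi_0}$ (no capitulation in the minus part --- needed to turn ``generated by'' into the exact order, a point you skip) and makes the unit-norm denominator of Chevalley's formula disappear in this component, leaving only the inertia contribution $(R_{\chi_0}/pR_{\chi_0})^t$, which comes from the complete splitting of the $\ell_i$ in $k/\Q$ built into the definition of ${\mathcal S}_t$ --- no reflection principle or character duality is involved. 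You instead assert that \emph{admissibility} is what annihilates the unit contribution, and that the main work is to show admissibility forces $(E_k^{+}:E_k^{+}\cap{\rm N}(K^\times))$ to attain $\#\Omega(K/k)$. No such implication exists or is needed: admissibility is a hypothesis on the Bernoulli ideal $b_{\chi_0\varphi}$, and its sole role (via Theorem \ref{MWK}) is to pin down $\#\Cl_\chi=\#(R_\chi/{\mathfrak M}_\chi^t)$ --- the ``arithmetic half'' you already recorded; read in the odd eigenspace, the unit index you want to control is identically $1$ while $\#\Omega(K/k)^{e_{\chi_0}}=q^t$, so the statement you would set out to prove is false as written, and a Bernoulli hypothesis cannot yield normic information about global units in any case. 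Once the roles are reassigned (oddness gives the unconditional eigenspace Chevalley formula; admissibility closes the order count), your argument becomes the paper's; the concluding items ($r_{\chi_0}\le t$, minimality via existence of admissible sets of minimal size, which the paper also defers to a Chebotarev density argument in \cite{Gr11}) are fine.
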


The principle of the proof is an application of the computations of invariant classes of the 
Section \ref{sect4} in $K/k$ where $K= k_\varphi \cdot k = k_{\chi_0^{} \cdot \varphi}$ and
where $\varphi$ is the $\chi_0^{}$-admissible character of order $p$. 

\unitlength=0.75cm
$$\vbox{\hbox{\hspace{-4.7cm} \begin{picture}(11.5,2.5)
\put(6.0,2.50){\line(1,0){2.8}}
\put(6.0,0.50){\line(1,0){2.8}}
\put(5.50,0.9){\line(0,1){1.20}}
\put(9.50,0.9){\line(0,1){1.20}}
\put(7.0,0.1){\footnotesize$d,\, p\!\nmid \! d$}
\put(9.0,2.4){$K= k_{\chi_0^{} \!\cdot \varphi}$}
\put(5.3,2.4){$k_{\varphi}$}
\put(5.4,0.40){$\Q$}
\put(9.1,0.4){$k = k_{\chi_0^{}}$}
\put(9.65,1.4){\footnotesize$G \simeq \Z/p\Z$}
\end{picture}   }} $$

\unitlength=1.0cm
We consider the $G$-module $M = \Cl_K^{e_{\chi_0^{}}}$ as a 
component of the relative class group $\Cl_K^-$; in other words, a semi-simple 
component of the $p$-class group of $K$, since from $\Cl_K^- = 
\plus_{\chi'_0 \in {\mathcal X}_k^-} \Cl_{K}^{e_{\chi'_0}}$ we have selected 
$\chi_0 \in {\mathcal X}_k^-$ and the associated filtration with characters of 
$M = \Cl_K^{e_{\chi_0^{}}}$ for which $M_1 = M^G = (\Cl_K^G)^{e_{\chi_0^{}}}$, 
$G := {\rm Gal}(K/k) \simeq \Z/p\Z$ (see \cite[(1978)]{Gr12}).

\medskip
We denote by ${\mathfrak L}_i$ the ideal of $K$ above ${\mathfrak l}_i$ (indeed, ${\mathfrak l}_i$
is totally ramified in $K/k$) and by $H_i := \cl_K ({\mathfrak L}_i)^{e_{\chi_0^{}}}$.
Then the proof consists in proving the following lemmas (see
\cite[Lemmes (1.2), (1.3), Corollaire (2.4)]{Gr11}:

\begin{lemma} The extension $j_{K/k} : \Cl_k^{e_{\chi_0^{}}} \too \Cl_K^{e_{\chi_0^{}}}$ is injective.
\end{lemma}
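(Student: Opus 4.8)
Write $\Delta := {\rm Gal}(k/\Q)$ and $G = {\rm Gal}(K/k) = \langle \sigma \rangle$. The plan is to identify ${\rm Ker}(j_{K/k})$ with a capitulation kernel, embed it $\Delta$-equivariantly into a unit cohomology group, and then show that the $e_{\chi_0^{}}$-component of that group vanishes. Two structural remarks set this up. Since $d=[k:\Q]$ is prime to $p$ and $K=k_{\chi_0^{}\cdot\varphi}$ is cyclic over $\Q$ of degree $d\,p$, we have ${\rm Gal}(K/\Q) \simeq \Delta \times G$; hence the idempotent $e_{\chi_0^{}} \in \Z_p[\Delta]$ acts on every ${\rm Gal}(K/\Q)$-module and commutes with the $G$-action, and taking $e_{\chi_0^{}}$-parts is exact. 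Moreover $\varphi \in {\mathcal X}_p = {\mathcal X}_p^+$, so $k_\varphi$ is totally real; therefore the complex conjugation $s_{-1}$ fixes $k_\varphi$, lies in the complement $\Delta$, and, $\chi_0^{}$ being odd, satisfies $s_{-1}\,e_{\chi_0^{}} = -\,e_{\chi_0^{}}$, i.e. $e_{\chi_0^{}}$ projects into the minus part.

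First I would build an injection $\Phi : {\rm Ker}(j_{K/k}) \hookrightarrow {\rm H}^1(G, E_K)$. Given $\cl_k({\mathfrak a})$ in the kernel, choose $\alpha \in K^\times$ with $j_{K/k}({\mathfrak a}) = (\alpha)$; as the extended ideal is $G$-invariant, $\alpha^{1-\sigma} \in E_K$ is a $1$-cocycle (its algebraic norm is trivial since $(1-\sigma)\,\nu_{K/k}=0$), and I set $\Phi(\cl_k({\mathfrak a})) := [\alpha^{1-\sigma}]$. Independence of the choices of $\alpha$ and of the representative ${\mathfrak a}$ is routine, and naturality makes $\Phi$ a $\Delta$-homomorphism. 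For injectivity: if $\alpha^{1-\sigma} = \varepsilon^{1-\sigma}$ with $\varepsilon \in E_K$, then $\beta := \alpha\,\varepsilon^{-1}$ is fixed by $\sigma$, hence $\beta \in k^\times$, and $j_{K/k}({\mathfrak a}) = (\alpha) = (\beta)$; since the extension map on ideals is injective, ${\mathfrak a} = (\beta)$ is principal. Taking $e_{\chi_0^{}}$-parts then reduces the lemma to proving ${\rm H}^1(G, E_K)^{e_{\chi_0^{}}} = 0$.

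Because $|G|=p$ annihilates ${\rm H}^1(G, E_K)$ and $\Z_p$ is flat over $\Z$, one has ${\rm H}^1(G, E_K) = {\rm H}^1(G, E_K \otimes \Z_p)$, and exactness of $e_{\chi_0^{}}$ gives ${\rm H}^1(G, E_K)^{e_{\chi_0^{}}} = {\rm H}^1(G, (E_K\otimes\Z_p)^{e_{\chi_0^{}}})$. By the first paragraph $(E_K\otimes\Z_p)^{e_{\chi_0^{}}}$ lies in the minus part $(E_K\otimes\Z_p)^-$. Now $K$ is a CM field with maximal totally real subfield $K^+$, so the Hasse unit index $[E_K : W_K\,E_{K^+}]$ divides $2$ and is prime to $p$; since $p>2$, splitting by the $s_{-1}$-eigenspaces yields $(E_K\otimes\Z_p)^- = W_K \otimes \Z_p = \mu_{p^\infty}(K)$, on which $\Delta$ acts through the Teichmüller character $\omega$. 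As $\chi_0^{} \ne \omega$, we get $(E_K\otimes\Z_p)^{e_{\chi_0^{}}} = \mu_{p^\infty}(K)^{e_{\chi_0^{}}} = 0$ (and trivially so if $\zeta_p \notin K$), whence ${\rm Ker}(j_{K/k})^{e_{\chi_0^{}}} = 0$; that is, $j_{K/k} : \Cl_k^{e_{\chi_0^{}}} \to \Cl_K^{e_{\chi_0^{}}}$ is injective.

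The main obstacle is the first step: producing the equivariant embedding $\Phi$ into ${\rm H}^1(G, E_K)$ and checking it is well defined and injective. After that, the vanishing is clean, and it is instructive that the two standing hypotheses do exactly the expected work — $p>2$ forces $k_\varphi$ to be real and the unit index $Q$ to be prime to $p$, so that the odd part of the units reduces to roots of unity, while $\chi_0^{} \ne \omega$ removes the single odd character carried by $\mu_{p^\infty}(K)$. In the excluded case $\chi_0^{}=\omega$ the $\mu_{p^\infty}$-contribution can survive, and capitulation in the $\chi_0^{}$-component is then genuinely possible.
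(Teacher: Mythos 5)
Your proof is correct and follows essentially the same route as the paper, whose own justification is the one-line remark that for odd $\chi_0^{} \ne \omega$ the $\chi_0^{}$-component of the units is trivial, so no relative classes capitulate. Your write-up simply fleshes out that sketch in the standard way: the capitulation kernel embeds $\Delta$-equivariantly into ${\rm H}^1(G,E_K)$, and the $e_{\chi_0^{}}$-component of $E_K\otimes\Z_p$ reduces (via the CM structure and $p>2$) to $\mu_{p^\infty}(K)^{e_{\chi_0^{}}}$, which vanishes since $\chi_0^{}\ne\omega$.
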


This comes easily from the fact that $\chi_0^{}$ is odd (the $\chi_0^{}$-components of units 
are trivial for $\chi_0^{} \ne \omega$, thus there is no capitulation of relative classes).

\begin{lemma} We have $M_1 = j_{K/k}(\Cl_k^{e_{\chi_0^{}}}) \cdot 
\langle  H_1, \ldots, H_t \rangle_{R_{\chi_0^{}}}$ and $M_1 \big / j_{K/k}(\Cl_k^{e_{\chi_0^{}}}) \simeq 
(R_{\chi_0^{}}/ p\,R_{\chi_0^{}})^{\,t}$.
\end{lemma}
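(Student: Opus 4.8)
The plan is to realise $M_1=M^G$ as the group of ambiguous classes of the $e_{\chi_0}$-component, to see that here \emph{ambiguous} and \emph{strongly ambiguous} classes coincide, and then to compute the order by an equivariant reading of the genus exact sequence of \S\ref{genera}. Write $\Delta:={\rm Gal}(k/\Q)$; since $p\nmid d=\#\Delta$, the element $e_{\chi_0}$ is an idempotent of $\Z_p[\Delta]$, so the functor $X\mapsto X^{e_{\chi_0}}$ is exact and may be applied to all the sequences below, with $\Z_p[\Delta]\,e_{\chi_0}\simeq R_{\chi_0}$ and $\F_p[\Delta]\,e_{\chi_0}\simeq R_{\chi_0}/pR_{\chi_0}$ of $\F_p$-dimension $f_{\chi_0}$. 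Because $K/\Q$ is Abelian, $\Delta$ acts trivially on $G={\rm Gal}(K/k)$, whence $G^{e_{\chi_0}}=0$ as $\chi_0\ne 1$. I also record the ramification: as $\psi_0(\ell_i)=1$ each $\ell_i$ splits totally in $k$, and as $\varphi$ has conductor $\ell_1\cdots\ell_t$ each of the $d$ primes ${\mathfrak l}\mid\ell_i$ is totally ramified in $K/k$ with ${\mathfrak l}{\mathcal O}_K={\mathfrak L}^{\,p}$; these $td$ primes are exactly the ramified ones.

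First I would prove the generation statement. The group $I_K^G$ of $G$-invariant ideals is generated by the extended ideals $j_{K/k}(I_k)$ and by the ramified primes ${\mathfrak L}\mid\ell_i$; applying $\cl_K$ and the idempotent $e_{\chi_0}$, and using that $\Delta$ permutes the primes above each $\ell_i$ in a single free orbit, $\cl_K(I_K^G)^{e_{\chi_0}}$ is generated over $R_{\chi_0}$ by $j_{K/k}(\Cl_k^{e_{\chi_0}})$ and by $H_1,\dots,H_t$. It then remains to identify the strongly ambiguous classes $\cl_K(I_K^G)^{e_{\chi_0}}$ with all of $M_1$. For this I apply $e_{\chi_0}$ to the isomorphism $\Cl_K^{G}/\cl_K(I_K^G)\simeq (E_k\cap {\rm N}(K^\times))/{\rm N}(E_K)$ of \S\ref{sect4}: since $\chi_0$ is odd and $\chi_0\ne\omega$, the unit component $(E_k\otimes\Z_p)^{e_{\chi_0}}$ vanishes — precisely the input already used in the preceding lemma for injectivity of $j_{K/k}$ — so the right-hand term dies and $M_1=\cl_K(I_K^G)^{e_{\chi_0}}=j_{K/k}(\Cl_k^{e_{\chi_0}})\cdot\langle H_1,\dots,H_t\rangle_{R_{\chi_0}}$, which is the first assertion.

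Next I would determine the quotient. From ${\mathfrak L}_i^{\,p}=j_{K/k}({\mathfrak l}_i)$ I get $H_i^{\,p}=j_{K/k}(h_i)\in j_{K/k}(\Cl_k^{e_{\chi_0}})$, so $Q:=M_1/j_{K/k}(\Cl_k^{e_{\chi_0}})$ is an $R_{\chi_0}/pR_{\chi_0}$-module generated by the $t$ images of the $H_i$; hence it is a quotient of $(R_{\chi_0}/pR_{\chi_0})^t$ and $\#Q\le p^{t f_{\chi_0}}$. By the injectivity lemma $\#j_{K/k}(\Cl_k^{e_{\chi_0}})=\#\Cl_{\chi_0}$, so it suffices to prove $\#M^G=p^{t f_{\chi_0}}\cdot\#\Cl_{\chi_0}$, which then forces $Q\simeq(R_{\chi_0}/pR_{\chi_0})^t$. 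To count $\#M^G=\#(M/M^{1-\sigma})$ I take the $e_{\chi_0}$-part of the genus sequence $1\to E_k/(E_k\cap{\rm N}(K^\times))\to\bigoplus_{\mathfrak p}I_{\mathfrak p}(K/k)\to{\rm Gal}(H_{K/k}/H_k)\to 1$: the unit term vanishes again, while $\bigoplus_{{\mathfrak p}\mid\ell_i}I_{\mathfrak p}\simeq\F_p[\Delta]$ (free orbit) gives $\big(\bigoplus_{\mathfrak p}I_{\mathfrak p}\big)^{e_{\chi_0}}\simeq(R_{\chi_0}/pR_{\chi_0})^t$, hence ${\rm Gal}(H_{K/k}/H_k)^{e_{\chi_0}}\simeq(R_{\chi_0}/pR_{\chi_0})^t$. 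Splicing the $e_{\chi_0}$-parts of $1\to{\rm Gal}(H_{K/k}/K)\to{\rm Gal}(H_{K/k}/k)\to G\to 1$ and $1\to{\rm Gal}(H_{K/k}/H_k)\to{\rm Gal}(H_{K/k}/k)\to\Cl_k\to 1$, and using $G^{e_{\chi_0}}=0$ together with ${\rm Gal}(H_{K/k}/K)^{e_{\chi_0}}=M/M^{1-\sigma}$, I obtain $\#M^G=\#{\rm Gal}(H_{K/k}/H_k)^{e_{\chi_0}}\cdot\#\Cl_{\chi_0}=p^{t f_{\chi_0}}\cdot\#\Cl_{\chi_0}$, as wanted.

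The delicate point is this order computation: one must make the genus exact sequence and the relation $[H_{K/k}:K]=\#\Cl_K^{G}$ genuinely $\Delta$-equivariant and then extract the $e_{\chi_0}$-isotypic orders correctly. The two facts that carry the argument are that the inertia module attached to an $\ell_i$ (totally split in $k$, totally ramified in $K/k$) is the regular representation $\F_p[\Delta]$, contributing exactly one copy of $R_{\chi_0}/pR_{\chi_0}$, and that the unit contribution is killed because $\chi_0$ is odd and different from $\omega$. Once these are in place the remainder — identifying ambiguous with strongly ambiguous classes and the annihilation $H_i^{\,p}\in j_{K/k}(\Cl_k^{e_{\chi_0}})$ — is formal.
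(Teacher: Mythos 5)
Your proof is correct, and it reaches exactly the two assertions of the lemma, but by a more self-contained route than the paper's. The paper disposes of the order count by remarking that $\# M_1 = \#\Cl_{\chi_0^{}}\cdot p^{\,t f_{\chi_0^{}}}$ ``is nothing else than the $\chi_0^{}$-Chevalley formula'' for an odd character, i.e.\ it cites the isotypic ambiguous-class-number formula of \cite{Gr12} (with the detailed proofs in \cite[Lemmes (1.2), (1.3)]{Gr11}), the generation statement coming, as in your argument, from strongly ambiguous classes plus the vanishing of the $\chi_0^{}$-component of units. What you do differently is to re-derive that isotypic formula from the survey's own toolkit: you take $e_{\chi_0^{}}$-parts of the genera exact sequence of Section 3 (which the paper states only non-equivariantly) and splice the two Galois exact sequences for ${\rm Gal}(H_{K/k}/k)$, using $G^{e_{\chi_0^{}}}=0$ and exactness of the idempotent functor (licit since $p\nmid d$). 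Both computations hinge on the same two facts, which you isolate correctly: the inertia module attached to each $\ell_i$ (totally split in $k$, totally ramified in $K/k$) is the regular representation $\F_p[\Delta]$, $\Delta:={\rm Gal}(k/\Q)$, hence contributes exactly one copy of $R_{\chi_0^{}}/p\,R_{\chi_0^{}}$; and the unit contribution dies because $\chi_0^{}$ is odd and $\ne\omega$ (the same input as in the injectivity lemma). So your route buys independence from the external reference \cite{Gr12}, at the price of the $\Delta$-equivariance verifications (of the genus sequence, of the isomorphism $\Cl_K^{G}/\cl_K(I_K^G)\simeq (E_k\cap {\rm N}(K^\times))/{\rm N}(E_K)$, and of the snake-type maps) that you rightly flag as the delicate point; these are routine here precisely because ${\rm Gal}(K/\Q)\simeq\Delta\times G$ is Abelian, so all maps in play commute with $\Delta$. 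The closing steps --- $H_i^{\,p}=j_{K/k}(h_i)$ by total ramification, so the quotient is an $\F_p$-module generated by $t$ elements over $R_{\chi_0^{}}$, and a surjection between finite groups of equal order is an isomorphism --- coincide with the paper's implicit argument.
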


This expression giving $\# M_1 = \# \Cl_k^{e_{\chi_0^{}}} \!\cdot p^{{\,t} \cdot f_{\chi_0^{}}}$,
where $f_{\chi_0^{}}$ is the residue degree of $p$ in $\Q(\xi_d)/ \Q$, is
nothing else than the $\chi_0^{}$-Chevalley's formula in $K/k$ for an odd character $\chi_0^{}$
(cf. \cite{Gr12}).

\begin{lemma} 
The character $\varphi \in \Phi_S$ is $\chi_0^{}$-admissible if and only if $M=M_1$
(in other words,  if and only if there are no exceptional $\chi_0^{}$-classes).
\end{lemma}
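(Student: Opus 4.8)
The plan is to reduce the equivalence to a single comparison of cardinalities. Since $M_1 = M^G$ is always a subgroup of $M$, the equality $M = M_1$ holds if and only if $\# M = \# M_1$, and both orders are computable from results already established. First I would evaluate $\# M_1$: by the first two lemmas above, $j_{K/k}$ is injective on $\Cl_k^{e_{\chi_0}}$ and $M_1 \big/ j_{K/k}(\Cl_k^{e_{\chi_0}}) \simeq (R_{\chi_0}/p\,R_{\chi_0})^t$. Because $R_{\chi_0} = \Z_p[\xi_d]$ is unramified of degree $f_{\chi_0}$ over $\Z_p$, its quotient $R_{\chi_0}/p\,R_{\chi_0}$ is the residue field $\F_{p^{f_{\chi_0}}}$, whence $\# M_1 = \# \Cl_k^{e_{\chi_0}} \cdot p^{\,t\,f_{\chi_0}}$.

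Next I would compute $\# M$ using the component formula \eqref{eq37}. As $K = k_{\chi_0 \cdot \varphi}$ is cyclic over $\Q$ of degree $d\,p$ with $\varphi$ of order $p$, the parameter $n$ in \eqref{eq37} equals $1$, and the two factors are $\Cl_{\chi_0}$ (the term $i=0$, since $\psi_p^{p}=1$) and $\Cl_{\chi_0 \cdot \varphi}$ (the term $i=1$). Using $\Cl_{\chi_0} = \Cl_k^{e_{\chi_0}}$ this gives $\# M = \# \Cl_k^{e_{\chi_0}} \cdot \# \Cl_{\chi_0 \cdot \varphi}$. Comparing with the value of $\# M_1$, the condition $M = M_1$ becomes exactly $\# \Cl_{\chi_0 \cdot \varphi} = p^{\,t\,f_{\chi_0}}$.

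Finally I would invoke Theorem \ref{MWK}, which yields $\# \Cl_{\chi_0 \cdot \varphi} = \#\big(R_{\chi_0 \cdot \varphi} / b_{\chi_0 \cdot \varphi}\big)$. Writing $\chi := \chi_0 \cdot \varphi$, the ring $R_\chi = \Z_p[\xi_{dp}]$ is the valuation ring of the local field $\Q_p(\xi_{dp})$, hence a discrete valuation ring whose only ideals are the powers $\mathfrak{M}_\chi^s$; since $p$ is totally ramified in $\Q_p(\xi_p)/\Q_p$ and unramified of degree $f_{\chi_0}$ in $\Q_p(\xi_d)/\Q_p$, the residue field of $R_\chi$ is again $\F_{p^{f_{\chi_0}}}$, so that $\#(R_\chi/\mathfrak{M}_\chi^s) = p^{\,s\,f_{\chi_0}}$. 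Writing $b_\chi = \mathfrak{M}_\chi^s$, the equality $\# \Cl_{\chi} = p^{\,t\,f_{\chi_0}}$ is thus equivalent to $s = t$, i.e.\ to $b_\chi = \mathfrak{M}_\chi^t$, which is precisely the definition of $\chi_0$-admissibility of $\varphi$. Chaining these equivalences gives the statement; the parenthetical reformulation follows since $M = M_1$ says that every class of $M$ is fixed by $G$, i.e.\ there are no classes lying outside $M_1$ (the ``exceptional'' ones).

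The only genuinely delicate point is the bookkeeping of the residue-field cardinality $p^{f_{\chi_0}}$, which must appear identically on both sides (once in the structure of $M_1$, once in $R_\chi/\mathfrak{M}_\chi$), together with the observation that $b_\chi$ is automatically a power of $\mathfrak{M}_\chi$ because $R_\chi$ is a discrete valuation ring. Once these are in place the proof is a direct matching of orders, and I foresee no further obstacle.
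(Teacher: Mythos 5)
Your proof is correct and is essentially the paper's own argument: the paper likewise compares $\#M$, computed via formula \eqref{eq37} with $n=1$ as $\#\Cl_{\chi_0^{}}\cdot\#\Cl_{\chi_0^{}\cdot\varphi}$, against $\#M_1 = \#\Cl_k^{e_{\chi_0^{}}}\cdot p^{\,t f_{\chi_0^{}}}$ from the two preceding lemmas, and then invokes Theorem \ref{MWK} to translate $\#\Cl_{\chi_0^{}\cdot\varphi} = p^{\,t f_{\chi_0^{}}}$ into $b_{\chi_0^{}\cdot\varphi} = {\mathfrak M}_{\chi_0^{}\cdot\varphi}^{\,t}$. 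Your explicit bookkeeping of the residue field $\F_{p^{f_{\chi_0^{}}}}$ and the discrete-valuation-ring structure of $R_\chi$ (so that $b_\chi$ is automatically a power of ${\mathfrak M}_\chi$) simply fills in details the paper leaves implicit by citing \cite{Gr11}.
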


Thus, since $\Cl_k^{e_{\chi_0^{}}} = \Cl_{e_{\chi_0^{}}}$, we get 
$\# M := \Cl_{K}^{e_{\chi_0}} = \#\Cl_{\chi_0^{}}\! \cdot \# \Cl_{\chi_0^{} \cdot \varphi}$ 
from formula \eqref{eq37} with $n=1$.
From Theorem \ref{MWK}, we have $M=M_1$ if and only if 
$b_{\chi_0^{} \cdot \varphi} = {\mathfrak M}_{\chi_0^{} \cdot \varphi}^{\,t}$ ($\chi_0^{}$-admissibility).
From the lemmas we get ${\rm N}_{K/k}(M) = {\rm N}_{K/k}(M_1)$, hence
$\Cl_{\chi_0^{}} = \Cl_{\chi_0^{}}^p \cdot \langle  h_1, \ldots, h_t \rangle_{R_{\chi_0^{}}}$, whence
$\Cl_{\chi_0^{}} =  \langle  h_1, \ldots, h_t \rangle_{R_{\chi_0^{}}}$.

\medskip
So, for practical use, we are reduced to the known algorithm which must stop at the first step.
The ideals $b_{\chi_0^{} \cdot \varphi}$ generated by Bernoulli numbers are easily obtained
from the Stickelberger element of the field $K$:
$${\rm St}(K) := \sm_{a=1}^m \Big(\frac{K/\Q}{a}\Big)^{-1} \Big(\frac{a}{m} - \frac{1}{2}\Big)
\in {\rm Gal}(K/\Q), $$ 

where $m$ is the conductor of $K$ and $\big(\frac{K/\Q}{a}\big)$ the Artin symbol (for gcd\,$(a,m)=1$).

\smallskip
For more details see \cite{Gr11} where it is also proved that admissible sets have a nontrivial
Chebotarev density leading to the effectivness of the detemination of the structure and where 
relations with some results of Schoof \cite{Sch1} are discussed (cf. \cite[\S\S\,4, 5]{Gr11}).

\smallskip
One can then find many numerical examples in the Appendix \cite[(A)]{Gr11} by Berthier,
showing some cases of non-monogenic $\Cl_{K}^{e_{\chi_0}}$ as $R_{\chi_0^{}}$-modules.
For instance, let $k = \Q \Big(\sqrt{-541\, (37+6\,\sqrt{37})} \Big)$ (quartic cyclic over $\Q$) and $p=5$;
there exist two $5$-adic characters $\chi_0^{}$ and $\chi_0'$ for which $\Cl_{k}^{e_{\chi_0}} \simeq 
R_{\chi_0^{}}/(2-i) R_{\chi_0^{}} \plus R_{\chi_0^{}}/(2-i) R_{\chi_0^{}}$ and $\Cl_{k}^{e_{\chi'_0}}=1$
(a rare example of non-monogenic $\Cl_{k}^{e_{\chi_0}}$). See \cite{Ber} for numerical tables
where the case of even characters $\chi_0^{}$ is also illustrated.

\section{Conclusion and perspectives}
To conclude, we can say that the $p$-class group is perhaps not the only object
for the class field theory setting of a number field $k$. Indeed, we prefer the very similar finite 
$p$-group, denoted ${\mathcal T}_{k,p}$, and defined as the $p$-torsion subgroup of the 
Galois group of the maximal $p$-ramified (i.e., unramified outside $p$), non-complexified, 
Abelian pro-$p$-extension of $k$ denoted  $H_{k, p}^{\rm pra}$ in the following schema:
\unitlength=0.6cm
$$\vbox{\hbox{\hspace{-1.8cm}  \begin{picture}(11.5,5.6)
\put(8.4,4.50){\line(1,0){2.2}}
\put(4.1,4.50){\line(1,0){2.4}}
\put(4.8,2.50){\line(1,0){2.2}}

\bezier{350}(4.0,4.8)(7.6,5.4)(10.7,4.8)
\put(7.2,5.4){${\mathcal T}_{k, p}$}

\put(3.50,2.9){\line(0,1){1.20}}
\put(3.50,0.9){\line(0,1){1.20}}
\put(7.50,2.9){\line(0,1){1.20}}

\bezier{200}(3.9,0.5)(5.7,0.5)(7.4,2.2)
\put(6.4,0.8){$\Cl_{k, p}^{\rm ord}$}

\put(10.85,4.4){$H_{k, p}^{\rm pra}$}
\put(6.9,4.4){$\wt k\, H_{k, p}^{\rm ord}$}
\put(3.4,4.4){$\wt k$}
\put(7.2,2.4){$H_{k, p}^{\rm ord}$}
\put(2.85,2.4){$\wt k \!\cap\! H_{k, p}^{\rm ord}$}
\put(3.4,0.40){$k$}
\end{picture}   }} $$

\unitlength=1.0cm
where $\wt k$ is the compositum of the $\Z_p$-extensions of $k$,
$H_{k, p}^{\rm ord}$ the $p$-Hilbert class field, and $\Cl_{k, p}^{\rm ord}$ is the 
$p$-class group of $k$ (ordinary sense).

\smallskip
This finite group ${\mathcal T}_{k,p}$, connected with the Leopoldt conjecture at $p$
and the residue of the $p$-adic zeta function, has been 
studied by many authors by means of algebraic and analytic viewpoints 
(e.g., K. Iwasawa \cite{Iw}, J. Coates \cite[Appendix]{Co}, H. Koch \cite{Ko}, J-P. Serre \cite{Se2}, etc.), 
and we have done extensive 
practical studies in \cite{Gr2} from earlier publications \cite{Gr7}, \cite{Gr8}, \cite{Gr9},
and recently in a historical overview of the Bertrandias-Payan module (a quotient of ${\mathcal T}_{k,p}$) 
by means of three different approaches by J-F. Jaulent, T. Nguyen Quang Do and us
(see the details in \cite{Gr6} and its bibliography).
 
\smallskip
The functorial properties of these modules ${\mathcal T}_{k,p}$ are more canonical 
(especially in any $p$-extensions $K/k$ of Galois group $G$) with an explicit formula for 
$\#{\mathcal T}_{K,p}^G$ under the sole Leopoldt conjecture, so that a ``Chevalley's formula'' 
does exist for any $p$-extension $K/k$, see \cite[Theorem IV.3.3]{Gr2} and \cite{MoNg}; 
${\mathcal T}_{k,p}$ contains any deep information on class groups and units (using, for instance, 
reflection theorems to connect ${\mathcal T}_{k,p}$ and $\Cl_{k,p}^{\Pl_p}$ when $k$ contains
the $p$th roots of unity, \cite[Proposition III.4.2.2]{Gr2}); furthermore, it is a fundamental invariant
concerning the structure of the Galois group of the maximal $p$-ramified pro-$p$-extension of $k$, 
saying that this pro-$p$-group is free if and only if ${\mathcal T}_{k,p}=1$ 
(fundamental notion called {\it $p$-rationality} of $k$; see \cite[Theorem III.4.2.5]{Gr2}). 

\smallskip
Moreover the properties of the ${\mathcal T}_{k,p}$
in a $p$-extension are in relation with the notion of {\it $p$-primitive ramification} introduced in \cite[(1986)]{Gr9}
and largely developed in many papers on the subject (e.g., \cite{Ja2}, \cite{MoNg}). In a similar context, in connection 
with Gross's conjecture \cite{FG}, mention the {\it logarithmic class group} introduced by 
J-F. Jaulent (\cite{Ja3}, \cite{So}) governing the {\it $p$-Hilbert kernel} and the {\it $p$-regular kernel}.

\smallskip
The main property concerning these groups ${\mathcal T}_{k,p}$ is that, under the Leopoldt 
conjecture for $p$ in $K/k$ (even if $K/k$ is not Galois), the transfer map 
$j_{K/k} : {\mathcal T}_{k,p} \too {\mathcal T}_{K,p}$
(corresponding as usual to extension of ideals in a broad sense) is {\it injective} 
\cite[Theorem IV.2.1]{Gr2} contrary to the case of $p$-class groups.
Furthermore, the property of $p$-rationality we have mentionned above, has important consequences 
as is shown by Galois representations theory (e.g., \cite[(2016)]{Gre1}) or conjectural and heuristic 
aspects (e.g., \cite[(2016)]{Gr10}).

\smallskip
So we intend to make much advertise for these ${\mathcal T}_{k,p}$ since the corresponding filtration
$(M_i)_{i \geq 0}$ in a finite cyclic $p$-extension $K/k$ has not been studied to our knowledge.

\subsection*{Acknowledgments} I thank Pr. Balasubramanian Sury for his kind interest and his valuable help
for the submission of this paper. I am very grateful to the Referee for the careful reading and the suggestions for improvements of the paper.

\end{document}